\newtheorem{theo}{Theorem}[section]
\newtheorem*{conv}{Important Convention}
\newtheorem*{ack}{Acknowledgment}
\newtheorem{df}[theo]{Definition}
\newtheorem{term}[theo]{Terminology}
\newtheorem{cor}[theo]{Corollary}
\newtheorem{lem}[theo]{Lemma}
\newtheorem{nota}[theo]{Notation}
\newtheorem{prop}[theo]{Proposition}
\newtheorem{rem}[theo]{Remark}
\newtheorem{exam}[theo]{Example}
\newtheorem{claim}[theo]{Claim}
\newcommand{\fonction}[4]
{\begin{array}{rcl}
#1 & \to & #2 \\
#3 & \mapsto & #4 \\
\end{array}}
\newcommand{\R}{\mathbf{R}}
\newcommand{\C}{\mathbf{C}}
\newcommand{\Z}{\mathbf{Z}}
\newcommand{\F}{\mathbf{F}}
\newcommand{\N}{\mathbf{N}}
\newcommand{\Sd}{\operatorname{Sd}}
\newcommand{\Sp}{\operatorname{Sp}}
\newcommand{\Id}{\operatorname{Id}}
\newcommand{\Ad}{\operatorname{Ad}}
\newcommand{\id}{\operatorname{id}}
\newcommand{\Tr}{\operatorname{Tr}}
\newcommand{\Aut}{\operatorname{Aut}}
\newcommand{\SL}{\operatorname{SL}}
\newcommand{\alt}{\operatorname{alt}}
\newcommand{\tr}{\operatorname{tr}}
\newcommand{\alg}{\operatorname{alg}}
\newcommand{\range}{\operatorname{range}}
\renewcommand{\span}{\operatorname{span}}
\begin{document}

\title[Type ${\rm II_1}$ Factors with Prescribed Fundamental Group]{Construction of Type ${\rm II_1}$ Factors with Prescribed Countable Fundamental Group}

\author{Cyril Houdayer}

\address{UCLA \\
                 Department of Mathematics \\
                 Los Angeles \\
                 CA 90095 \\
                 USA}

\email{cyril@math.ucla.edu}

\subjclass[2000]{46L10; 46L54; 46L55; 22D10; 22D25}

\keywords{Malleable actions; Intertwining techniques; Relative property $(T)$; Haagerup property; Free Araki-Woods factors}

\begin{abstract}
In the context of Free Probability Theory, we study two different constructions that provide new examples of factors of type ${\rm II_1}$ with prescribed countable fundamental group. First we investigate state-preserving group actions on the almost periodic free Araki-Woods factors satisfying both a condition of mixing and a condition of free malleability in the sense of Popa. Typical examples are given by the free Bogoliubov shifts. Take an ICC $w$-rigid group $G$ such that $\mathcal{F}(L(G)) = \{1\}$ (e.g. $G = \Z^2 \rtimes \SL(2, \Z)$). For any countable subgroup $S \subset \R^*_+$, we construct an action of $G$ on $L(\F_\infty)$ such that the associated crossed product $L(\F_\infty) \rtimes G$ is a type ${\rm II_1}$ factor and its fundamental group is $S$. The second construction is based on a free product. Take $(B(H), \psi)$ any factor of type ${\rm I}$ endowed with a faithful normal state and denote by $S \subset \R^*_+$ the subgroup generated by the point spectrum of $\psi$. We show that the centralizer $(L(G) \ast B(H))^{\tau \ast \psi}$ is a type ${\rm II_1}$ factor and its fundamental group is $S$. Our proofs rely on Popa's deformation/rigidity strategy using his intertwining-by-bimodules technique.
\end{abstract}

\maketitle

\section{Introduction}

Popa introduced in \cite{{popamal1}, {popamal2}, {popamsri}} the following remarkable concept: a state-preserving action $\sigma$ of a group $G$ on a von Neumann algebra $(\mathcal{N}, \varphi)$ is said to be {\it malleable} if there exists a continuous action $\alpha : \R \to \Aut(\mathcal{N} \otimes \mathcal{N}, \varphi \otimes \varphi)$ which commutes with the diagonal action $(\sigma_g \otimes \sigma_g)$ and such that $\alpha_1(a \otimes 1) = 1 \otimes a$, for any $a \in \mathcal{N}$. It is said to be {\it s-malleable} if there moreover exists an automorphism $\beta$ of $(\mathcal{N} \otimes \mathcal{N}, \varphi \otimes \varphi)$ commuting with $(\sigma_g \otimes \sigma_g)$ such that $\beta\alpha_t = \alpha_{-t}\beta$ for all $t \in \R$ and $\beta(a \otimes 1) = a \otimes 1$ for all $a \in \mathcal{N}$ and such that $\beta$ has period $2$: $\beta^2 = \id$. Typical examples of such actions are given by the commutative Bernoulli shifts. The remarkable idea of Popa was to combine this \emph{deformation} property of the action with a \emph{rigidity} property, namely the \emph{relative property} $(T)$ of Kazhdan and Margulis (\cite{{kazhdan}, {margulis}}) of the group. Using this {\textquotedblleft tension\textquotedblright} between deformation and rigidity, Popa solved  longstanding problems in the theory of von Neumann algebras. We refer to the papers of Popa and his coauthors \cite{{ipp}, {popasup}, {popamal1}, {popamal2}, {popa2001}, {popamsri}, {popavaes}}, and to Vaes' Bourbaki Seminar \cite{vaesbern} for the stunning applications of these \emph{deformation/rigidity} phenomena.

In the context of Free Probability Theory, Popa introduced another notion of \emph{malleability} where the free product naturally replaces the tensor product. 

\begin{df}[Popa, \cite{popamsri}]\label{malleable}
\emph{Let $G$ be a countable discrete group. Let $\sigma$ be a state-preserving action of $G$ on the von Neumann algebra $(\mathcal{N}, \varphi)$.}
\begin{enumerate}
\item [$\bullet$] \emph{The action is said to be \emph{malleable} if there exists a continuous action $\alpha : \R \to \Aut(\mathcal{N} \ast \mathcal{N}, \varphi \ast \varphi)$ which commutes with the diagonal action $(\sigma_g \ast \sigma_g)$ and such that $\alpha_1(a \ast 1) = 1 \ast a$, $\forall a \in \mathcal{N}$.}
\item [$\bullet$] \emph{It is said to be \emph{s-malleable} if there moreover exists a period $2$ automorphism $\beta$ of $(\mathcal{N} \ast \mathcal{N}, \varphi \ast \varphi)$ commuting with $(\sigma_g \ast \sigma_g)$ such that $\beta\alpha_t = \alpha_{-t}\beta$ for all $t \in \R$ and $\mathcal{N} \ast \C \subset (\mathcal{N} \ast \mathcal{N})^\beta$.} 
\end{enumerate}
\end{df}
\begin{conv}
\emph{In the rest of this paper, the notion of \emph{malleability} or \emph{s-malleability} will always be taken in the sense of Definition $\ref{malleable}$, i.e. in the {\textquotedblleft free\textquotedblright} sense.
}
\end{conv}

We introduce the following important notation: 

\begin{nota}
\emph{Let $(\mathcal{N}, \varphi)$ be a von Neumann algebra endowed with a faithful normal state, $\mathcal{N}^\varphi$ denotes the centralizer of the state $\varphi$. We set $\mathcal{N} \ominus \C = \mathcal{N} \cap \ker(\varphi)$. More generally, let $\mathcal{B}\subset \mathcal{N}$ be a von Neumann subalgebra globally invariant under the modular group $(\sigma_t^\varphi)$; if $E_{\mathcal{B}} : \mathcal{N} \to \mathcal{B}$ denotes the unique state-preserving conditional expectation, we set $\mathcal{N} \ominus \mathcal{B} = \mathcal{N} \cap \ker E_\mathcal{B}$.
}
\end{nota}
In the context of free probability, we present, for a state-preserving group action, a stronger mixing property than the usual one.
\begin{df}
\emph{Let $G$ be a countable discrete group. Let $\sigma$ be a state-preserving action on the von Neumann algebra $(\mathcal{N}, \varphi)$. The action is said to be \emph{freely mixing} if for all $n \in \N^*$, $x_1, \dots, x_n, y_0, y_1, \dots, y_n \in \mathcal{N} \ominus \C$, except possibly $y_0$ and/or $y_n$ are equal to $1$, we have}
\begin{equation*}
\lim_{g_1, \dots, g_n \to \infty} \varphi(y_0\sigma_{g_1}(x_1)y_1 \cdots \sigma_{g_n}(x_n)y_n) = 0.
\end{equation*}
\end{df}
Obviously, a \emph{freely mixing} action is \emph{strongly mixing}. We shall show (see Section $\ref{exa}$) that free Bernoulli actions and free Bogoliubov shifts on the almost periodic free Araki-Woods factors are typical examples of (s-)malleable, freely mixing actions.

\begin{term}
\emph{A $w$-{\it rigid} group $G$ is a group that admits an infinite normal subgroup $H$, such that the pair $(G, H)$ has the relative property $(T)$ of Kazhdan and Margulis \cite{{kazhdan}, {margulis}}. The example par excellence of such a pair is $(G, H) = (\Z^2 \rtimes \SL(2, \Z), \Z^2)$. Other examples include $(\Z^2 \rtimes \Gamma, \Z^2)$ where $\Gamma$ is any nonamenable subgroup of $\SL(2, \Z)$ acting on $\Z^2$ by its given embedding in $\SL(2, \Z)$ (see \cite{{burger}, {shalom}}). Of course, any group $G$ with the property $(T)$ is $w$-rigid.
}
\end{term}

In this paper, we present two different constructions that produce new examples of type ${\rm II_1}$ factors with a prescribed countable fundamental group. Remind that for a type ${\rm II_1}$ factor $M$, the {\it fundamental group} of $M$ is defined as follows:
\begin{equation*}
\mathcal{F}(M) := \{\tau(p)/\tau(q) : pMp \simeq qMq\},
\end{equation*}
where $p, q$ are projections in $M$. The first construction is based on a \emph{crossed product}. In \cite{popamal1}, we remind that Popa proved several results of intertwining of rigid subalgebras in crossed products. Using the so-called Connes-St\o rmer Bernoulli shifts, he constructed actions of ICC $w$-rigid groups on the hyperfinite type ${\rm II_1}$ factor $\mathcal{R}$ such that the associated crossed products have prescribed countable fundamental group. We prove an analogue of this result with $L(\F_\infty)$ instead of $\mathcal{R}$. The type ${\rm II_1}$ factor $L(\F_\infty)$ appears naturally as the centralizer of the free quasi-free state for the almost periodic free Araki-Woods factors. We obtain the following result:

\begin{theo}\label{funda}
Let $S \subset \R^*_+$ be a countable subgroup. Let $G$ be an ICC $w$-rigid group. Assume that $\mathcal{F}(L(G)) = \{1\}$. Then, there exists an action of $G$ on the type ${\rm II_1}$ factor $L(\F_\infty)$ such that the crossed product $L(\F_\infty) \rtimes G$ is a type ${\rm II_1}$ factor with fundamental group equal to $S$.
\end{theo}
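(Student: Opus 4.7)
The plan is to realize the prescribed action as a free Bogoliubov shift of $G$ on an almost periodic free Araki-Woods factor whose free quasi-free state has point spectrum generating $S$, and then to compute the fundamental group of the resulting crossed product through Popa's deformation/rigidity techniques. Concretely, I would fix a countable generating set $\{\lambda_n\}_{n \geq 1}$ of $S$ and an orthogonal representation $U$ of $\R$ on an infinite dimensional separable real Hilbert space $K_\R$ whose point spectrum is $\{\lambda_n^{\pm 1}\}$ with infinite multiplicities. Setting $H_\R := \ell^2_\R(G) \otimes K_\R$ with the $\R$-action $\id \otimes U$, the almost periodic free Araki-Woods factor $(T_H, \varphi) := (\Gamma(H_\R, \id \otimes U_t)'', \varphi_U)$ has $\varphi$-point-spectrum equal to $S$ and, by Shlyakhtenko's structural theorem for the almost periodic case, centralizer $T_H^\varphi \cong L(\F_\infty)$. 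The left regular representation of $G$ on $\ell^2_\R(G)$ commutes with $\id \otimes U$ and functorially induces a free Bogoliubov shift $\sigma : G \curvearrowright (T_H, \varphi)$; by the material on free Bogoliubov shifts developed later in the paper, $\sigma$ is s-malleable and freely mixing. Restricting to the centralizer yields a free, weakly mixing, trace-preserving action of $G$ on $L(\F_\infty)$, so that $M := L(\F_\infty) \rtimes_\sigma G$ is a type ${\rm II_1}$ factor.

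The inclusion $S \subset \mathcal{F}(M)$ is obtained from modular partial isometries in the ambient algebra. The dual state $\widetilde\varphi := \varphi \circ E_{T_H}$ on $T_H \rtimes_\sigma G$ has centralizer exactly $M$, because $\sigma_t^{\widetilde\varphi}$ restricts to $\sigma_t^\varphi$ on $T_H$ and fixes $L(G)$ pointwise; its point spectrum therefore contains $S$. For each $\lambda \in S$, choosing a partial isometry $v \in T_H \rtimes_\sigma G$ with $\sigma_t^{\widetilde\varphi}(v) = \lambda^{it} v$ and source/range projections $p, q \in M$, one checks that $v$ normalises $M$ and implements an isomorphism $v M v^* \cong v^* M v$ between two corners of $M$ whose traces are in ratio $\lambda$, proving $\lambda \in \mathcal{F}(M)$.

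The reverse inclusion $\mathcal{F}(M) \subset S$ is the main obstacle. Given any isomorphism $\theta : M \to M^t$, I would run Popa's deformation/rigidity strategy in the free-malleable setting: the s-malleable deformation $\alpha_s$ of $(T_H \ast T_H, \varphi \ast \varphi)$ descends to a continuous deformation of $M$ inside a suitable enveloping algebra, and the relative property $(T)$ of the distinguished pair $(G, H)$ forces this deformation to converge uniformly on $\theta(L(H)^t)$. A transversality and spectral gap argument combined with the freely mixing property of $\sigma$ then yields, via Popa's intertwining-by-bimodules technique, that $\theta(L(H)^t) \prec_{M^t} L(H)$. A cocycle-untwisting step next lifts $\theta$, up to an inner perturbation, to an automorphism of the ambient type III algebra $T_H \rtimes_\sigma G$ that scales $\widetilde\varphi$ by the factor $t$; since $\mathcal{F}(L(G)) = \{1\}$ and every such scaling lies in the point spectrum of $\widetilde\varphi$, this forces $t \in S$. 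The delicate step is the cocycle-untwisting in the free context, where the classical Connes--St\o rmer Bernoulli estimates of Popa must be replaced by free-probabilistic analogues that exploit the freely mixing hypothesis.
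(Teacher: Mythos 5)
Your construction and the easy inclusion are exactly the paper's: one takes $(T_S,\varphi_S)$, the free Bogoliubov shift associated with $\lambda_G$ (s-malleable and freely mixing by Section \ref{exa}), whose centralizer is $L(\F_\infty)$ by Shlyakhtenko's theorem, and $S\subset\mathcal{F}(M)$ comes from partial isometries that are eigenvectors of the modular group, as in Theorem \ref{rigidaraki} (for factoriality, note you do not need freeness or weak mixing of the restricted action: $G$ ICC together with $\mathcal{N}^\varphi$ a factor already gives it). The problem is the hard inclusion $\mathcal{F}(M)\subset S$, which you only gesture at, and the two mechanisms you name would not carry the proof. First, intertwining $\theta(L(H)^t)$ into $L(H)$ (or $L(G)$) is not enough to bring $\mathcal{F}(L(G))=\{1\}$ into play: one must conjugate the whole quasi-normalizer $\theta(L(G)^t)$ of the rigid subalgebra into an amplification of $L(G)$ by a \emph{single} partial isometry which is moreover an eigenvector of $\varphi$. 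This is precisely the content of Theorem \ref{rigidembed}, whose proof needs the period-$2$ automorphism $\beta$ to push the property (T) intertwiner from $t=2^{-s}$ to $t=1$, the identification $\widetilde{\mathcal{M}}\cong\mathcal{M}_1\ast_{L(G)}\mathcal{M}_2$ together with Theorem \ref{amalga} to locate relative commutants, and the (freely) mixing property via Theorem $3.1$ of \cite{popamal1} to control quasi-normalizers. None of this is delivered by ``transversality and spectral gap'': spectral gap is not the rigidity source here (relative property (T) is), and no spectral gap argument is available or used in this setting.

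Second, your closing step --- a ``cocycle-untwisting'' lifting $\theta$, up to inner perturbation, to an automorphism of the type ${\rm III}$ algebra $T_H\rtimes G$ that scales the dual state, followed by the claim that every such scaling factor lies in the point spectrum --- is neither needed nor justified, and you yourself flag it as the missing ingredient. Such a lifting would be a strong rigidity statement (conjugating the two copies of the core inside the ambient algebra), far beyond what the deformation/rigidity input yields, and the paper never proves anything of the sort. Instead, the scaling $t$ is absorbed by the eigenvalue $\gamma$ of the intertwining isometry produced by Theorem \ref{rigidembed}: after using factoriality of $\theta(L(G))$ to patch the isometry into $v$ with $vv^*=p$, one gets $v^*\theta(L(G))v\subset L(G)^{t/\gamma}$, and a second application of Theorem \ref{rigidembed} to $Q_1=\theta^{-1}(v(M_n(\C)\otimes L(H))v^*)$, combined with the finitely generated bimodule control coming from mixing, upgrades this to the equality $v^*\theta(L(G))v=L(G)^{t/\gamma}$; then $t/\gamma\in\mathcal{F}(L(G))=\{1\}$ forces $t=\gamma\in S$. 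As written, your final step in effect assumes a statement of the same strength as the one to be proved, so the reverse inclusion is a genuine gap.
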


The typical example of a group $G$ satisfying the conditions of Theorem $\ref{funda}$, is $G = \Z^2 \rtimes \SL(2, \Z)$ \cite{popa2001}. Once again, the actions considered in Theorem $\ref{funda}$ are \emph{concrete}: they are the free Bogoliubov shifts.

The second construction is based on a \emph{free product}. In \cite{ipp}, among other remarkable results, Ioana, Peterson \& Popa gave several examples of type ${\rm II_1}$ factors with a prescribed fundamental group using the free product construction in the tracial case. We shall generalize some of their techniques to the \emph{almost periodic} case. For $(\mathcal{A}, \psi)$, a von Neumann algebra endowed with an almost periodic faithful normal state, denote by $\Sp(\mathcal{A}, \psi)$ the point spectrum of $\psi$ and by $\Gamma_{\Sp(\mathcal{A}, \psi)} \subset \R^*_+$ the subgroup generated by $\Sp(\mathcal{A}, \psi)$. We obtain the following (see Theorem \ref{generalresult} for a more general version):

\begin{theo}\label{funda2}
Let $G$ be an ICC $w$-rigid group such that $\mathcal{F}(L(G)) = \{1\}$. Let $(\mathcal{A}, \psi)$ be a  von Neumann algebra endowed with an almost periodic state such that the centralizer $\mathcal{A}^\psi$ has the Haagerup property. Write $M = (L(G) \ast \mathcal{A})^{\tau \ast \psi}$. Then $M$ is a type ${\rm II_1}$ factor and its fundamental group is $\Gamma_{\Sp(\mathcal{A}, \psi)}$.
\end{theo}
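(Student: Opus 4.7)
The plan is to follow Popa's deformation/rigidity strategy, adapting the tracial free-product techniques of Ioana--Peterson--Popa \cite{ipp} to the almost periodic setting. Factoriality of $M$ as a type ${\rm II_1}$ factor should follow from the general structure theory of free products with almost periodic states: since $L(G)$ is a ${\rm II_1}$ factor, its $\tau$-centralizer (itself) is diffuse, so Dykema-type results identify $(L(G) \ast \mathcal{A})^{\tau \ast \psi}$ as a type ${\rm II_1}$ factor. For the fundamental group I split into two inclusions.

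The inclusion $\Gamma_{\Sp(\mathcal{A}, \psi)} \subset \mathcal{F}(M)$ is the easy direction. Given $\lambda \in \Sp(\mathcal{A}, \psi)$, pick a nonzero eigenvector $v \in \mathcal{A}$ with $\sigma_t^\psi(v) = \lambda^{it} v$. Then $p := v^*v$ and $q := vv^*$ lie in $\mathcal{A}^\psi \subset M$, and since $\sigma_t^{\tau \ast \psi}$ restricts to $\sigma_t^\psi$ on $\mathcal{A}$, the map $\Ad(v)$ sends $p \tilde{M} p$ to $q \tilde{M} q$ and intertwines the compressed modular groups; it therefore restricts to an isomorphism $pMp \to qMq$. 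The KMS condition gives $\psi(vv^*) = \lambda\,\psi(v^*v)$, so the trace ratio equals $\lambda$, yielding the inclusion.

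The reverse inclusion $\mathcal{F}(M) \subset \Gamma_{\Sp(\mathcal{A}, \psi)}$ is where Popa's intertwining-by-bimodules becomes decisive. The heart of the argument would be a structural theorem: any diffuse rigid subalgebra $Q \subset M$ (in the relative property $(T)$ sense) must satisfy $Q \prec_M L(G)$. To prove this, I construct a malleable deformation of $\tilde{M} = L(G) \ast \mathcal{A}$ by embedding $\tilde{M} \hookrightarrow \tilde{M} \ast L(\Z)$ and using the one-parameter family of unitaries rotating the two canonical Haar unitaries, in the spirit of \cite{ipp}. The Haagerup property of $\mathcal{A}^\psi$ ensures that this deformation, when pulled back to $M$, is asymptotically compact over $L(G)$ while fixing $L(G)$ pointwise; combined with the relative property $(T)$ of $(G, H)$ and Popa's transversality trick, this forces $L(H) \prec_M L(G)$, and a Popa--Vaes normalizer extension (using $H \lhd G$) upgrades to a suitable intertwining of $L(G)$ into itself.

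To conclude, apply the intertwining to $\theta(L(H)) \subset M$, where $\theta : M^s \to M$ realizes a scalar $s \in \mathcal{F}(M)$: one obtains $\theta(L(G)) \prec_M L(G)$, hence a partial isometry $w \in M$ and a unital $\ast$-homomorphism $\pi : L(G)^s \to p_0 L(G) p_0$ with $w\theta(x) = \pi(x)w$ for $x \in L(G)^s$. Using $\mathcal{F}(L(G)) = \{1\}$ and the ICC property, $\pi$ must be a corner isomorphism, so the scaling $s$ is the trace ratio of two projections in $M$ that can differ only by an element arising from the eigenprojections of $\sigma^{\tau \ast \psi}$; tracking this through the discrete decomposition of the almost periodic state forces $s \in \Gamma_{\Sp(\mathcal{A}, \psi)}$. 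The main obstacle is the structural theorem in the third paragraph: in the IPP tracial setting the deformation is trace-preserving and uniform, while here the deformation on $\tilde{M}$ must respect the almost periodic state $\tau \ast \psi$, and delicate work is needed to exploit only the Haagerup property of the centralizer $\mathcal{A}^\psi$ (rather than of $\mathcal{A}$ itself) to obtain the intertwining conclusion on $M$.
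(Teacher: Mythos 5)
The central gap is the role you assign to the Haagerup property. The free malleable deformation one actually has here (the IPP-type deformation, realized on $\widetilde{\mathcal{M}} = \mathcal{M} \ast (L(\F_2))$ by $\alpha_t = (\Ad e^{ith_1}) \ast (\Ad e^{ith_2})$ together with the period-two $\beta$) does \emph{not} fix $L(G)$ pointwise --- it fixes only the amalgam, which in the plain free product is $\C$ --- and it is in no sense ``asymptotically compact over $L(G)$'': its definition never sees $\mathcal{A}^\psi$ or its approximation properties, so the Haagerup property cannot be fed into a transversality/spectral-gap argument at the deformation stage. What deformation/rigidity genuinely yields (Theorem \ref{embedamalga}, the almost periodic analogue of IPP) is only a dichotomy: a rigid $Q \subset M$ satisfies $Q \prec_{\mathcal{M}} L(G)$ \emph{or} $Q \prec_{\mathcal{M}} \mathcal{A}$. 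The Haagerup property of $\mathcal{A}^\psi$ enters afterwards, to kill the second alternative, and even then not for an arbitrary diffuse rigid $Q$ but for $Q^s \subset P^s$, the compression of $\theta(L(H)) \subset \theta(L(G))$ where $\theta : M \to M^t$ realizes the scaling: one uses the quasi-normalizer control theorem for (amalgamated) free products (Theorem \ref{amalga}) to show that the intertwining partial isometry carries the whole rigid inclusion $v^*Q^s v \subset v^* P^s v$ into a corner of $M_m(\C) \otimes \mathcal{A}^\psi$, contradicting Popa's theorem that a diffuse rigid inclusion cannot sit inside an algebra with the Haagerup property (Theorem \ref{haagfac}). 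Note also that the subalgebra to intertwine is $\theta(L(H))$, not $L(H)$ itself, for which ``$L(H) \prec_M L(G)$'' is vacuous since $L(H) \subset L(G)$.

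The endgame as you state it is also missing its key step and is circular. From $Q^s \prec_{\mathcal{M}} L(G)$ and the eigenvector version of the intertwining theorem (Theorem \ref{newembed}) one only gets a $\gamma$-eigenvector partial isometry $v$ with $v^*\theta(L(G))v \subset q(M_n(\C)\otimes L(G))q =: L(G)^{t/\gamma}$, i.e.\ an embedding \emph{into} a corner; you cannot invoke $\mathcal{F}(L(G)) = \{1\}$ until you know this is the full corner, and asserting that ``$\pi$ must be a corner isomorphism'' because $\mathcal{F}(L(G))=\{1\}$ and $G$ is ICC assumes exactly what has to be proved. In the paper this occupies the final third of the argument: setting $P_1 = \theta^{-1}(v(M_n(\C)\otimes L(G))v^*)$, running the dichotomy and the Haagerup exclusion a second time to get $Q_1 \prec_{\mathcal{M}} L(G)$, and then using the finite-trace bimodule statement of Theorem \ref{amalga} (from $L(G)w \subset wL(G)^{1/\lambda}$ one deduces $w \in M_{1,k}(\C)\otimes L(G)$) to conclude $P_1 \subset L(G)$, hence $v^*\theta(L(G))v = L(G)^{t/\gamma}$, hence $t/\gamma \in \mathcal{F}(L(G)) = \{1\}$ and $t = \gamma \in \Gamma_{\Sp(\mathcal{A},\psi)}$. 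Two smaller points: factoriality of $M$ does not follow from ``Dykema-type'' computations for an arbitrary $\mathcal{A}$ with almost periodic state --- the paper derives it from $\mathcal{M} \cap L(G)' \subset L(G)$, again via Theorem \ref{amalga}; and in the easy inclusion your eigenvector $v$ must be replaced by the partial isometry in its polar decomposition, which is again an eigenvector (the Appendix), before $\Ad(v)$ makes sense on corners.
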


Many examples of such von Neumann algebra $\mathcal{A}$ do exist: amenable von Neumann algebras endowed with a faithful normal almost periodic state, almost periodic free Araki-Woods factors with their free quasi-free state, but also all the free products studied by Dykema in \cite{dykema96}.  

This paper is organized as follows. Section $\ref{ara}$ is devoted to a few preliminaries. In Section $\ref{exa}$, we give the main examples of s-malleable freely mixing actions on these factors. In Section $\ref{te}$, we show some technical results about the intertwining of subalgebras. At last, in Section $\ref{fun}$ we use the \emph{deformation/rigidity} strategy {\textquotedblleft \`a la\textquotedblright} Popa to prove Theorems $\ref{funda}$ and $\ref{funda2}$. The Appendix is devoted to prove some well known facts about the polar decomposition of a vector.

\begin{nota}
\emph{Throughout this paper, we shall use the following notation: $G$ is any countable discrete group  and $(\mathcal{N}, \varphi)$ is any von Neumann algebra endowed with a faithful normal almost periodic state \cite{connes74}. Any group action is always assumed to be state-preserving. Any von Neumann algebra $\mathcal{N}$ is always assumed to have separable predual. Any state, conditional expectation is assumed to be normal and faithful. If $(\mathcal{M}, \varphi), (\mathcal{N}, \psi)$ are von Neumann algebras endowed with states, whenever we write $(\mathcal{M}, \varphi) \cong (\mathcal{N}, \psi)$, we mean that there exists a $\ast$-isomorphism $\theta : \mathcal{M} \to \mathcal{N}$ such that $\psi \circ \theta = \varphi$. For $n \in \N^*$ and a von Neumann algebra $\mathcal{M}$, we set $\mathcal{M}^n = M_n(\C) \otimes  \mathcal{M}$. The canonical normalized trace on $M_n(\C)$ is usually denoted by $\tr_n$.} 
\end{nota}

We mention that recently, Popa \& Vaes \cite{popavaes2} proved the existence of free ergodic measure-preserving actions of $\mathbf{F}_\infty$ on the standard non-atomic probability space $(X, \mu)$ whose type ${\rm II_1}$ factors and orbit equivalence relations have prescribed fundamental group in a large class $S$ of subgroups of $\R^*_+$ that contains all countable subgroups and many uncountable subgroups. In particular, they obtained the first examples of separable type ${\rm II_1}$ factors and orbit equivalence relations with uncountable fundamental group different from $\R^*_+$.

\begin{ack}
\emph{The author would like to thank Stefaan Vaes for his constant support, but also for stimulating discussions on the subject and helpful suggestions regarding this manuscript.}
\end{ack}

\section{Preliminaries}\label{ara}

\subsection{Von Neumann Algebras Endowed with Almost Periodic States}

Most of the time, the von Neumann algebra $(\mathcal{M}, \varphi)$ will be assumed to have a faithful normal {\it almost periodic} state $\varphi$. We regard $\mathcal{M} \subset B(L^2(\mathcal{M}, \varphi))$ through the GNS construction. We denote by $\widehat{\cdot} : \mathcal{M} \to L^2(\mathcal{M}, \varphi)$ the canonical embedding. Let $S^0_\varphi$ be the  antilinear operator defined by
\begin{equation*}
S^0_\varphi : \fonction{\widehat{\mathcal{M}}}{L^2(\mathcal{M}, \varphi)}{\widehat{x}}{\widehat{x^*}.}
\end{equation*}
Thanks to Tomita-Takesaki theory, $S^0_\varphi$ is closable and we denote by $S_\varphi$ its closure. Write $S_\varphi = J_\varphi \Delta^{1/2}_\varphi$ for its polar decomposition. The \emph{modular automorphism  group} $(\sigma_t^\varphi)$ of $\mathcal{M}$ w.r.t. the state $\varphi$ is defined by $\sigma_t^\varphi = \Delta_\varphi^{it} \cdot \Delta_\varphi^{-it}$. Moreover, note that $L^2(\mathcal{M}, \varphi)$ comes naturally equipped with a structure of $\mathcal{M}$-$\mathcal{M}$ bimodule:
\begin{eqnarray*}
x \cdot \xi & := & x \xi, \\
\xi \cdot x & := & J_\varphi x^* J_\varphi \xi, \forall \xi \in L^2(\mathcal{M}, \varphi), \forall x \in \mathcal{M}. 
\end{eqnarray*}
In the sequel, we shall simply write $x\xi$ and $\xi x$ instead of $x \cdot \xi$ and $\xi \cdot x$.  

Denote by $\Sp(\mathcal{M}, \varphi)$ the point spectrum of the modular operator $\Delta_\varphi$. For $\gamma \in \Sp(\mathcal{M}, \varphi)$, denote by $\mathcal{M}^\gamma$ the vector subspace of $\mathcal{M}$ of all $\gamma$-\emph{eigenvectors} for $\varphi$, i.e.
\begin{eqnarray*}
\mathcal{M}^\gamma & = & \{ x \in \mathcal{M} : \sigma^\varphi_t(x) = \gamma^{it}x \}\\
& = & \{ x \in \mathcal{M} : \varphi(xy) = \gamma \varphi(yx), \forall y \in \mathcal{M}\}.
\end{eqnarray*}
Denote by $\mathcal{M}_{\alg} = \span \{\mathcal{M}^\gamma : \gamma \in \Sp(\mathcal{M}, \varphi)\}$, the linear span of the $\mathcal{M}^\gamma$'s. It is clear that $\mathcal{M}_{\alg}$ is a unital $\ast$-subalgebra of $\mathcal{M}$. Since the state $\varphi$ is assumed to be almost periodic, $\mathcal{M}_{\alg}$ is $\sigma$-weakly dense in $\mathcal{M}$. We can also write:
\begin{equation*}
L^2(\mathcal{M}, \varphi) = \bigoplus_{\gamma \in \Sp(\mathcal{M}, \varphi)} L^2(\mathcal{M}^\gamma).
\end{equation*}
It is straightforward to check that if $\gamma \in \Sp(\mathcal{M}, \varphi)$ and $\xi \in L^2(\mathcal{M}^\gamma)$, then for any $x \in \mathcal{M}^\lambda$, $x \xi$ and $\xi x \in L^2(\mathcal{M}^{\lambda \gamma})$.

The $L^2$-norm w.r.t. the state $\varphi$, simply denoted by $\| \cdot \|_2$, is defined as follows: $\|x\|_2 = \varphi(x^*x)^{1/2}$, for any $x \in \mathcal{M}$. Remind that the topology given by the norm $\|\cdot\|_2$ coincides with the strong topology on bounded sets of $\mathcal{M}$.

For $a \in \mathcal{M}$, set 
$$L_a : \fonction{L^2(\mathcal{M}, \varphi)}{L^2(\mathcal{M}, \varphi)}{\widehat{b}}{\widehat{ab}.}$$
The operator $L_a$ is always bounded and $\|L_a\| = \|a\|$. For $b \in \mathcal{M}$, set
$$R_b : \fonction{L^2(\mathcal{M}, \varphi)}{L^2(\mathcal{M}, \varphi)}{\widehat{a}}{\widehat{ab}.}$$
One must pay attention to the fact that the operator $R_b$ is unbounded in general. However, if $b \in \mathcal{M}^\gamma$ for some $\gamma \in \Sp(\mathcal{M}, \varphi)$, the Tomita-Takesaki theory claims that
\begin{equation*}
R_b = J_\varphi L_{\sigma^\varphi_{-i/2}(b^*)} J_\varphi = \gamma^{-1/2} J_\varphi L_{b^*} J_\varphi.
\end{equation*}
We refer for example to Lemma ${\rm VIII}. 3.18$ in \cite{takesakiII} for further details. Consequently, in this case, $R_b$ is bounded and $\|R_b\| = \gamma^{-1/2}\|b\|$. Thus, for any $x, z \in \mathcal{M}$, and for any $y \in \mathcal{M}^\gamma$, we have
\begin{eqnarray}\label{inequa}
\|x z y\|_2 & = & \|L_x R_y (\widehat{z})\|_2 \\ \nonumber
& = & \gamma^{-1/2} \|L_x J_\varphi L_{y^*} J_\varphi(\widehat{z})\|_2 \\ \nonumber
& \leq & \gamma^{-1/2} \|x\| \|y\| \|z\|_2.
\end{eqnarray}
We shall repeatedly use this inequality in the sequel.

\subsection{Hilbert (Bi)modules and the Basic Construction}

We reproduce here Appendix A of Vaes' Bourbaki seminar. Let $(\mathcal{N}, \varphi)$ be a von Neumann algebra endowed with an almost periodic state, and let $\mathcal{B} \subset \mathcal{N}$ be a von Neumann subalgebra globally invariant under the modular group $(\sigma_t^\varphi)$. We regard $\mathcal{N} \subset B(L^2(\mathcal{N}, \varphi))$ through the GNS construction. We shall simply denote $L^2(\mathcal{N}, \varphi)$ by $L^2(\mathcal{N})$. We denote by $E_{\mathcal{B}} : \mathcal{N} \to \mathcal{B}$ the unique state-preserving conditional expectation (see \cite{takesakiII} for further details). The \emph{basic construction} $\langle \mathcal{N}, e_{\mathcal{B}}\rangle$ is defined as the von Neumann subalgebra of $B(L^2(\mathcal{N}))$ generated by $\mathcal{N}$ and the orthogonal projection $e_{\mathcal{B}}$ of $L^2(\mathcal{N})$ onto $L^2(\mathcal{B}) \subset L^2(\mathcal{N})$. The relationship between $e_{\mathcal{B}}$ and $E_{\mathcal{B}}$ is as follows:
\begin{equation*}
e_{\mathcal{B}} x e_{\mathcal{B}} = E_{\mathcal{B}}(x) e_{\mathcal{B}}, \forall x \in \mathcal{N}.
\end{equation*}
It can be checked that $\langle \mathcal{N}, e_{\mathcal{B}}\rangle$ consists of those operators $T \in B(L^2(\mathcal{N}))$ that commute with the right module action of $\mathcal{B}$: $T(\xi b) = T(\xi) b$, $\forall \xi \in L^2(\mathcal{N}), \forall b \in \mathcal{B}$. In other words, 
\begin{equation*}
\langle \mathcal{N}, e_{\mathcal{B}}\rangle = (J_\varphi \mathcal{B} J_\varphi)' \cap B(L^2(\mathcal{N})).
\end{equation*}
The basic construction comes equipped with a canonical normal semifinite faithful weight $\widehat{\varphi}_{\mathcal{B}}$ which satisfies
\begin{equation*}
\widehat{\varphi}_{\mathcal{B}}(x e_{\mathcal{B}} y) = \varphi(xy), \forall x, y \in \mathcal{N}.
\end{equation*}
It can be checked that $\widehat{\varphi}_{\mathcal{B}}$ is also almost periodic and $\Sp(\langle \mathcal{N}, e_{\mathcal{B}}\rangle, \widehat{\varphi}_{\mathcal{B}}) = \Sp(\mathcal{N}, \varphi)$.

Let $(B, \tau)$ be any finite von Neumann algebra endowed with a faithful normal trace $\tau$. Let $\mathcal{K}$ be a right $B$-module. Denote by $B'$ the commutant of $B$ on $\mathcal{K}$, i.e. $B'$ consists of the operators $T \in B(\mathcal{K})$ that commute with the right module action of $B$. One can construct a faithful semifinite normal positive linear map 
\begin{equation*}
E' : (B')_+ \to \{\mbox{positive self-adjoint operators affiliated with }\mathcal{Z}(B)\}
\end{equation*}
satisfying $E'(x^*x) = E'(xx^*)$, $\forall x \in B'$. Moreover, whenever $T : L^2(B) \to K$ is bounded and right $B$-linear, $T^*T \in B$, $TT^* \in B'$, and we have
\begin{equation*}
E'(TT^*) = E_{\mathcal{Z}(B)}(T^*T).
\end{equation*}
The positive self-adjoint operator $E'(1_{B'})$ affiliated with the center $\mathcal{Z}(B)$ provides a complete invariant for right $B$-modules. It should be noted here that the right $B$-module $\mathcal{K}$ is finitely generated, i.e. of the form $pL^2(B)^{\oplus n}$ for some projection $p \in M_n(\C) \otimes B$, iff $E'(1_{B'})$ is bounded. In that case, $E'(1_{B'}) = (\tr_n \otimes E_{\mathcal{Z}(B)})(p)$. Write $\Tr = \tau \circ E'$. It follows that $\Tr$ is a faithful semifinite normal trace on $B'$. If $\Tr(1_{B'}) < \infty$, we shall say that the right $B$-module $\mathcal{K}$ is of \emph{finite trace over} $B$. Thus, $E'(1_{B'})$ is not bounded \emph{a priori} but is $\tau$-integrable. This implies that $E'(1_{B'})z$ is bounded for projections $z \in \mathcal{Z}(B)$ with trace arbitrary close to $1$. So, we have the following lemma:
\begin{lem}\label{finiment}
Let $\mathcal{K}$ be a right $B$-module of finite trace over $B$. Then, for any $\varepsilon > 0$, there exists a projection $z \in \mathcal{Z}(B)$ with $\tau (1 - z) \leq \varepsilon$, and such that the right $B$-module $\mathcal{K}z$ is finitely generated over $B$.
\end{lem}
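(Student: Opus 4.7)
The plan is to apply a spectral cutoff to the positive self-adjoint operator $T := E'(1_{B'})$, which is affiliated with $\mathcal{Z}(B)$, in order to reduce the problem to a bounded situation on a central projection close to $1$.

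First, the hypothesis that $\mathcal{K}$ has finite trace over $B$ says precisely that $\tau(T) = \Tr(1_{B'}) < \infty$, i.e. $T$ is $\tau$-integrable. Using Borel functional calculus inside $\mathcal{Z}(B)$, for each $n \in \N^*$ I would set
\begin{equation*}
z_n := \chi_{[0,n]}(T) \in \mathcal{Z}(B).
\end{equation*}
The sequence $(z_n)$ is an increasing sequence of central projections with $z_n \nearrow 1$ strongly, and $z_n T = T z_n$ is bounded with $\|z_n T\| \leq n$. By Chebyshev's inequality,
\begin{equation*}
\tau(1 - z_n) = \tau\!\left(\chi_{(n,\infty)}(T)\right) \leq \frac{\tau(T)}{n} = \frac{\Tr(1_{B'})}{n} \xrightarrow[n\to\infty]{} 0,
\end{equation*}
so given $\varepsilon > 0$ I choose $n$ with $\tau(1 - z_n) \leq \varepsilon$ and put $z := z_n$.

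It remains to show that $\mathcal{K} z$ is finitely generated as a right $B$-module. Because $z \in \mathcal{Z}(B)$, its right action on $\mathcal{K}$ is an orthogonal projection $p_z \in B'$ whose range is exactly $\mathcal{K}z$, and the commutant of $B$ acting on $\mathcal{K}z$ is $p_z B' p_z = B' p_z$. Applying the construction of $E'$ to the submodule $\mathcal{K}z$ yields a positive self-adjoint operator affiliated with $\mathcal{Z}(B) z$ which I claim equals $zT$; by the characterization recalled before the statement, the boundedness of $zT$ (by $n$) is equivalent to finite generation of $\mathcal{K}z$, and this finishes the argument.

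The main obstacle is precisely the identification that the dimension operator of $\mathcal{K}z$ equals $zT$. This is the $\mathcal{Z}(B)$-linearity of $E'$ applied to the central projection $z$: one uses the defining relation $E'(SS^*) = E_{\mathcal{Z}(B)}(S^*S)$ for bounded right $B$-linear $S : L^2(B) \to \mathcal{K}$ together with the orthogonal decomposition $\mathcal{K} = \mathcal{K} z \oplus \mathcal{K}(1-z)$, which splits $1_{B'} = p_z + p_{1-z}$ into orthogonal projections in $B'$ and therefore $T = E'(p_z) + E'(p_{1-z})$. Checking that $E'(p_z) = zT$ amounts to verifying that applying the right-multiplication by the central element $z$ on one side of the defining $S^*$ can equivalently be pushed through $E_{\mathcal{Z}(B)}$; this is routine once one unpacks the construction of $E'$ from bounded right $B$-linear maps out of $L^2(B)$, but it is the only step that genuinely uses the internal structure of $E'$ rather than its formal properties.
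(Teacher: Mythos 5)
Your argument is correct and is essentially the paper's own justification: the paper simply observes that $\tau$-integrability of $E'(1_{B'})$ forces $E'(1_{B'})z$ to be bounded for central projections $z$ of trace arbitrarily close to $1$, which is exactly your spectral cutoff $z = \chi_{[0,n]}(E'(1_{B'}))$ plus the Chebyshev estimate, followed by the boundedness-iff-finitely-generated criterion. The step you flag, that the dimension operator of $\mathcal{K}z$ is $zE'(1_{B'})$ (using that right multiplication by $z$ lies in $\mathcal{Z}(B')$ and that $E'$ is additive and $\mathcal{Z}(B)$-modular), is indeed the routine compatibility the paper takes for granted, so no gap remains.
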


Let's come back to the basic construction for the inclusion $B \subset \mathcal{N}$, and assume now that $B \subset \mathcal{N}^\varphi$. We observe that the restriction of $\varphi$ to $B$ defines a tracial state and  $\langle \mathcal{N}, e_B \rangle$ is precisely the commutant of $B$ on the right $B$-module $L^2(\mathcal{N})$. Using the previous paragraph, $\langle \mathcal{N}, e_B \rangle$ comes equipped with a canonical faithful semifinite normal trace $\Tr$. For $\gamma \in \Sp(\mathcal{N}, \varphi)$, denote by $p_\gamma$ the orthogonal projection of $L^2(\mathcal{N})$ onto $L^2(\mathcal{N}^\gamma)$. Let $\lambda \in \Sp(\mathcal{N}, \varphi)$ and $\xi \in L^2(\mathcal{N}^\lambda)$. Since $B \subset \mathcal{N}^\varphi$, one has $\xi b \in L^2(\mathcal{N}^\lambda)$, and so $p_\gamma (\xi b) = \delta_{\gamma, \lambda} \xi b = p(\xi) b$. Consequently, $p_\gamma \in \langle \mathcal{N}, e_B \rangle$. One can check that 
\begin{eqnarray*}
\widehat{\varphi}_B (x) & = & \sum_{\gamma \in \Sp(\mathcal{N}, \varphi)} \widehat{\varphi}_B(p_\gamma x p_\gamma) \\
 \Tr(x)  & = &  \sum_{\gamma \in \Sp(\mathcal{N}, \varphi)} \gamma^{-1} \widehat{\varphi}_B(p_\gamma x p_\gamma), \forall x \in \langle \mathcal{N}, e_B \rangle_+.
\end{eqnarray*}
In particular, on $p_\gamma \langle \mathcal{N}, e_B \rangle p_\gamma$, $\widehat{\varphi}_B$ is tracial and a multiple of $\Tr$, $\forall \gamma \in \Sp(\mathcal{N}, \varphi)$. In Section $\ref{te}$, we shall encounter the following situation. Let $p \in p_\gamma \langle \mathcal{N}, e_B \rangle p_\gamma$ be a projection such that $\widehat{\varphi}_B(p) < \infty$. Let $\mathcal{K}$ be the right $B$-module defined by $\mathcal{K} = pL^2(\mathcal{N})$. We can check that the commutant of $B$ on $\mathcal{K}$ is exactly $p \langle \mathcal{N}, e_B \rangle p$. Since $\Tr(p) = \gamma^{-1} \widehat{\varphi}_B(p) < \infty$, it follows that as a right $B$-module, $\mathcal{K}$ is of finite trace over $B$. Thus, one can apply Lemma $\ref{finiment}$.

\subsection{Free Araki-Woods Factors of Shlyakhtenko}

Let $H_{\R}$ be a real Hilbert space and let $(U_t)$ be an orthogonal representation of $\R$ on $H_{\R}$. Let $H_\C = H_{\R} \otimes_{\R} \C$ be the complexified Hilbert space. Let $A$ be the infinitesimal generator of $(U_t)$ on $H_\C$. Define another inner product on $H_\C$ by
\begin{equation*}
\langle \xi, \eta\rangle_{U} = \left\langle \frac{2}{1 + A^{-1}}\xi, \eta\right\rangle.
\end{equation*}
Note that for any $\xi \in H_\R$, $\|\xi\|_U = \|\xi\|$; also, for any $\xi , \eta \in H_\R$, $\Re(\langle \xi, \eta\rangle_U) = \langle \xi, \eta\rangle$. We refer to \cite{shlya97} for the main properties of this inner product. Denote by $H$ the completion of $H_\C$ for this new inner product. Introduce now the \emph{full Fock space} of $H$:
$$\mathcal{F}(H) =\C\Omega \oplus \bigoplus_{n = 1}^{\infty} H^{\otimes n}.$$ 
The unit vector $\Omega$ is called \emph{vacuum vector}. For any $\xi \in H$, we have the \emph{left creation operator}
$$l(\xi) : \mathcal{F}(H) \to \mathcal{F}(H) : \left\{ 
{\begin{array}{l} l(\xi)\Omega = \xi, \\ l(\xi)(\xi_1 \otimes \cdots \otimes \xi_n) = \xi \otimes \xi_1 \otimes \cdots \otimes \xi_n.
\end{array}} \right.
$$
For any $\xi \in H$, we denote by $s(\xi)$ the real part of $l(\xi)$ given by
$$s(\xi) = \frac{l(\xi) + l(\xi)^*}{2}.$$
The crucial result of Voiculescu \cite{voiculescu92} claims that the distribution of the operator $s(\xi)$ w.r.t. the vacuum vector state $\varphi_U(x) = \langle x\Omega, \Omega\rangle_U$ is the semicircular law of Wigner supported on the interval $[-\|\xi\|, \|\xi\|]$. 

\begin{df}[Shlyakhtenko, \cite{shlya97}]
\emph{Let $(U_t)$ be an orthogonal representation of $\R$ on the real Hilbert space $H_{\R}$ $(\dim H_{\R} \geq 2)$. The \emph{free Araki-Woods factor} associated with $H_\R$ and $(U_t)$, denoted by $\Gamma(H_{\R}, U_t)''$, is defined by}
$$\Gamma(H_{\R}, U_t)'' = \{s(\xi) : \xi \in H_{\R}\}''.$$
\emph{The vector state $\varphi_{U}(x) = \langle x\Omega, \Omega\rangle_U$ is called the \emph{free quasi-free state}.}
\end{df}

The free Araki-Woods factors provided many new examples of full factors of type {\rm III} \cite{{barnett95}, {connes73}, {shlya2004}}. We can summarize some of their general properties in the following theorem (see also Vaes' Bourbaki seminar \cite{vaes2004}):

\begin{theo}[Shlyakhtenko, \cite{{shlya2004}, {shlya99}, {shlya98}, {shlya97}}]
Let $(U_t)$ be an orthogonal representation of $\R$ on the real Hilbert space $H_{\R}$ with $\dim H_{\R} \geq 2$. Denote by $\mathcal{N} = \Gamma(H_{\R}, U_t)''$.
\begin{enumerate}
\item $\mathcal{N}$ is of type ${\rm II_1}$ iff $U_t = id$ for every $t \in \R$.
\item $\mathcal{N}$ is of type ${\rm III_{\lambda}}$ $(0 < \lambda < 1)$ iff $(U_t)$ is periodic of period $\frac{2\pi}{|\log \lambda|}$.
\item $\mathcal{N}$ is of type ${\rm III_1}$ in the other cases.
\item If $(U_t)$ is almost periodic, then $\varphi_U$ is an almost periodic state.
\end{enumerate}
\end{theo}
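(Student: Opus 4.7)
My plan hinges on the formula
$$\sigma_t^{\varphi_U}(s(\xi)) = s(U_t\xi),\qquad \xi\in H_\R,$$
which identifies the modular group of $\varphi_U$ with the restriction to $\mathcal{N} = \Gamma(H_\R, U_t)''$ of the second quantization of $(U_t)$ on $\mathcal{F}(H)$. To verify this formula I would compute $S^0_{\varphi_U}$ on simple tensors $\xi_1\otimes\cdots\otimes\xi_n$ with $\xi_j\in H_\R$ (it reverses the order of the factors), extract $J_{\varphi_U}$ and $\Delta_{\varphi_U}$ from the polar decomposition, and recognize $\Delta_{\varphi_U}^{it}$ as the second quantization of the unitary induced by $(U_t)$ on $H$ in the deformed inner product $\langle\cdot,\cdot\rangle_U$.

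Next I would invoke Shlyakhtenko's fullness and spectral analysis from \cite{shlya98} to conclude that $\operatorname{Inn}(\mathcal{N})$ is closed in $\Aut(\mathcal{N})$ and, more precisely, that $\sigma_t^{\varphi_U}$ is inner iff $U_t = \id$. Together with Connes' theorem this identifies the $T$-invariant as $T(\mathcal{N}) = \{t\in\R : U_t = \id\}$, while the Connes spectrum $S(\mathcal{N})$ equals the closed multiplicative subgroup of $\R^*_+$ generated by $\Sp(A)$. Statements (i)--(iii) then drop out of the Connes classification of factors: (i) $U_t = \id$ makes $\varphi_U$ tracial and identifies $\mathcal{N}$ with the free group factor $L(\F_{\dim H_\R})$, of type ${\rm II_1}$; (ii) if $(U_t)$ has period $2\pi/|\log\lambda|$ then $\Sp(A)\subset\lambda^\Z$ and $T(\mathcal{N}) = \frac{2\pi}{|\log\lambda|}\Z$, yielding type ${\rm III_\lambda}$; (iii) in every other case $\Sp(A)$ generates a dense subgroup of $\R^*_+$, so $S(\mathcal{N}) = [0,\infty)$ and $\mathcal{N}$ is of type ${\rm III_1}$. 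The converses are read off the same formulas running in reverse.

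For (iv), almost periodicity of $(U_t)$ gives an orthogonal decomposition of $H_\C$ into finite-dimensional eigenspaces of $A$. For each eigenvalue $\gamma$ one constructs an element of $\mathcal{N}$ (a generalized circular operator built from creation and annihilation operators on the corresponding eigenvector) that is a $\gamma$-eigenvector of $\sigma^{\varphi_U}$; reduced words in these elements span a $\sigma$-weakly dense $*$-subalgebra of $\mathcal{N}$, which is exactly the statement that $\varphi_U$ is an almost periodic state.

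The main obstacle is the fullness of $\mathcal{N}$ together with the identification of the inner part of $\sigma^{\varphi_U}$; this is not formal and rests on delicate free-probabilistic asymptotic estimates for creation and annihilation operators in \cite{shlya98}. Fullness is what pins down $T(\mathcal{N})$ exactly and, in case (iii), rules out type ${\rm III_0}$. A secondary difficulty is the spectrum computation in the non-almost-periodic case, where one must check that a purely continuous generator $A$ still yields $S(\mathcal{N}) = [0,\infty)$ rather than some intermediate closed subgroup of $\R^*_+$.
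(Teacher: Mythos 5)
This theorem is quoted in the paper as background, with citations to Shlyakhtenko's papers, and the paper itself contains no proof of it; so there is nothing internal to compare your argument against. Judged against the cited literature, your outline follows the standard route: the identification of $\sigma_t^{\varphi_U}$ with the free Bogoliubov action of $(U_t)$ is correct and is the right starting point, and deferring the fullness statement and the computation of the $T$- and $\Sd$-invariants to \cite{shlya98} and \cite{shlya2004} is legitimate for a result of this kind, since that is exactly where the hard analysis lives. Your argument for (iv) (eigenvectors of $A$ give $\sigma^{\varphi_U}$-eigenvectors whose words span a dense $\ast$-subalgebra) is the standard one, modulo the small point that almost periodicity of $(U_t)$ only gives a dense span of eigenvectors of $A$, not finite-dimensional eigenspaces; that is all you need anyway.

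The one place where the outline as written is too quick is the passage from the $T$-invariant to the type in (ii) and (iii). Knowing $T(\mathcal{N}) = \frac{2\pi}{|\log\lambda|}\Z$ does not by itself exclude type ${\rm III_0}$, and in (iii) the relation $S(\mathcal{N}) = $ closed subgroup generated by $\Sp(A)$ is not automatic: Connes' $S$-invariant is an intersection over all states, and it coincides with $\Sp(\Delta_{\varphi_U})$ only once you know, for instance, that the centralizer $\mathcal{N}^{\varphi_U}$ is a factor (Theorem \ref{pparaki}, itself a deep result of \cite{shlya97} via the matricial model), or you argue through the invariants $\tau$ and $\Sd$ for \emph{full} factors as in \cite{shlya2004}. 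So either of those inputs has to be invoked explicitly to rule out ${\rm III_0}$ and to pin down $S(\mathcal{N})$; with that made precise, your sketch matches the proofs in the cited references.
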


\begin{rem}[\cite{shlya97}]
\emph{Explicitly the value of $\varphi_U$ on a word in $s(h_\iota)$ is given by}
\begin{equation}\label{formula}
\varphi_U(s(h_1) \cdots s(h_n)) = 2^{-n}\sum_
{(\{\beta_i, \gamma_i\}) 
 \in NC(n), 
\beta_i < \gamma_i}
\prod_{k = 1}^{n/2}\langle h_{\beta_k}, h_{\gamma_k}\rangle_U.
\end{equation}
\emph{for $n$ even and is zero otherwise. Here $NC(2p)$ stands for all the noncrossing pairings of $\{1, \dots, 2p\}$, i.e. pairings for which whenever $a < b < c < d$, and $a, c$ are in the same class, then $b, d$ are not in the same class. The total number of such pairings is given by the $p$-th Catalan number}
\begin{equation*}
C_p = \frac{1}{p + 1}\begin{pmatrix}
2p \\
p
\end{pmatrix}.
\end{equation*}
\end{rem}

In the almost periodic case, using a powerful tool called the \emph{matricial model}, Shlyakhtenko obtained the following remarkable result:

\begin{theo}[Shlyakhtenko, \cite{{shlya2004}, {shlya97}}]\label{pparaki}
Let $(U_t)$ be a nontrivial almost periodic orthogonal representation of $\R$ on the real Hilbert space $H_{\R}$ with $\dim H_{\R} \geq 2$. Let $A$ be the infinitesimal generator of $(U_t)$ on $H_{\C}$, the  complexified Hilbert space of $H_{\R}$. Denote by $\mathcal{N} = \Gamma(H_{\R}, U_t)''$. Let $\Gamma \subset \R^*_+$ be the subgroup generated by the point spectrum of $A$. Then, $\mathcal{N}$ only depends on $\Gamma$ up to state-preserving isomorphisms. 

Conversely, the group $\Gamma$ coincides with the $\Sd$ invariant of the factor $\mathcal{N}$. Consequently, $\Sd$ completely classifies the almost periodic free Araki-Woods factors. Moreover, the centralizer of the free quasi-free state $\varphi_U$ is isomorphic to the type ${\rm II_1}$ factor $L(\F_{\infty})$.
\end{theo}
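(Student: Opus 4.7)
The plan is to prove the three assertions (state-preserving isomorphism depends only on $\Gamma$; $\Sd(\mathcal{N}) = \Gamma$; $\mathcal{N}^{\varphi_U} \cong L(\F_\infty)$) by reducing everything to Shlyakhtenko's \emph{matricial model}. Since $(U_t)$ is almost periodic, decompose $H_\R$ orthogonally into the fixed subspace $H_\R^0$ and a direct sum of two-dimensional real subspaces, one for each pair of eigenvalues $\{\lambda, \lambda^{-1}\}$ of $A$ with $\lambda \in \Sp(A) \cap (1, \infty)$, on each of which $(U_t)$ acts by a rotation at rate $\lambda^{it}$. Verifying that the functor $H_\R \mapsto \Gamma(H_\R, U_t)''$ converts orthogonal direct sums into reduced free products (with amalgamation over $\C\Omega$), one obtains $\mathcal{N}$ as a free product of ``elementary'' factors, each depending only on one eigenvalue $\lambda$ together with its multiplicity.

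For the first assertion, the key lemma is the free absorption identity
\begin{equation*}
\Gamma(H_\R, U_t)'' \ast \Gamma(H_\R, U_t)'' \;\cong\; \Gamma(H_\R \oplus H_\R, U_t \oplus U_t)''
\end{equation*}
in a state-preserving way; iterating this shows that each eigenvalue multiplicity may freely be replaced by $\aleph_0$. Next I would show that if $\lambda, \mu \in \Sp(A)$ with $\lambda, \mu \neq 1$, one can, by taking suitable products and polar decompositions of the creation/annihilation operators already present, exhibit a new eigenvector of the modular operator with eigenvalue $\lambda\mu$; iterating this generation procedure together with the absorption identity shows that after a state-preserving $\ast$-isomorphism one may replace $(U_t)$ by any other almost periodic representation whose eigenvalue group equals $\Gamma$, so the isomorphism class of $(\mathcal{N}, \varphi_U)$ depends only on $\Gamma$.

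For the second assertion, the inclusion $\Sd(\mathcal{N}) \subseteq \Gamma$ is immediate: $\varphi_U$ is an almost periodic state, and on $L^2(\mathcal{N}, \varphi_U) = \mathcal{F}(H)$ the modular operator is the second quantization of $A$, whose point spectrum is exactly the group $\Gamma$ generated by $\Sp(A)$ (one reads off eigenvectors by looking at tensor products $\xi_1 \otimes \cdots \otimes \xi_n$ of eigenvectors of $A$). The reverse inclusion $\Gamma \subseteq \Sd(\mathcal{N})$ is the heart of the matter and uses two deep inputs: (a) that $\mathcal{N}$ is a \emph{full} factor, which Shlyakhtenko proves by an asymptotic freeness/orthogonality argument for the creation operators, and (b) Connes' structural theorem which asserts that for a full factor $M$ the subgroup $\Sd(M)$ coincides with the eigenvalue group of \emph{every} almost periodic faithful normal state. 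These together force $\Sd(\mathcal{N}) = \Gamma$, from which the classification statement follows.

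For the final assertion, use the matricial model to produce explicit centralizer elements: for each eigenvalue $\lambda \in \Sp(A) \cap (0, \infty)$ with eigenvector $e_\lambda \in H$ (and $\overline{e_\lambda}$ an eigenvector for $\lambda^{-1}$), operators of the form $l(e_\lambda) + \lambda^{1/2} l(\overline{e_\lambda})^*$ are $\varphi_U$-eigenvectors with eigenvalue $\lambda$, and after suitable symmetrization or polar decomposition they yield elements of $\mathcal{N}^{\varphi_U}$ which, by the Wick formula $(\ref{formula})$ in terms of non-crossing pair partitions, are free semicircular with respect to $\varphi_U|_{\mathcal{N}^{\varphi_U}}$. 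Combined with the free semicirculars already living in the centralizer of the trivial summand of $(U_t)$, one obtains an infinite family of free semicirculars inside $\mathcal{N}^{\varphi_U}$, generating a copy of $L(\F_\infty)$. A density computation based on the eigenvalue decomposition of $L^2(\mathcal{N})$ then shows that these elements generate all of $\mathcal{N}^{\varphi_U}$.

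The main obstacle is clearly the sharp identification $\Sd(\mathcal{N}) \supseteq \Gamma$: both the fullness of $\mathcal{N}$ and Connes' general theorem on $\Sd$ for full factors are non-trivial, and it is here that all the genuinely $C^*$-algebraic work is concentrated; by contrast the isomorphism and centralizer statements are essentially algebraic/combinatorial consequences of the matricial model and the free product formula $(\ref{formula})$.
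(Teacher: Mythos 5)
This theorem is stated in the paper as background and is \emph{not} proved there: it is quoted from Shlyakhtenko's papers, so the only meaningful comparison is with his arguments. Measured against those, your outline gets one of the three parts essentially right but has two genuine gaps. The part you handle correctly is the computation of $\Sd$: the point spectrum of $\Delta_{\varphi_U}$ on the Fock space is indeed the group $\Gamma$ generated by the point spectrum of $A$, and the equality $\Sd(\mathcal{N}) = \Gamma$ does come from fullness of $\mathcal{N}$ together with Connes' theorem on $\Sd$ of full factors with almost periodic states; this matches the actual route, provided you treat fullness as a citation rather than a step you have proved.

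The first gap is in your proof that $\mathcal{N}$ depends only on $\Gamma$. The identity $\Gamma(H_\R,U_t)'' \ast \Gamma(H_\R,U_t)'' \cong \Gamma(H_\R\oplus H_\R, U_t\oplus U_t)''$ is not an absorption statement: it converts direct sums into free products, but it gives no isomorphism between $\Gamma(H_\R\oplus H_\R,U_t\oplus U_t)''$ and $\Gamma(H_\R,U_t)''$, so ``iterating'' it does not show that multiplicities can be replaced by $\aleph_0$. The same formal reasoning applied to the trivial representation would identify $L(\F_2)$ with $L(\F_\infty)$, i.e.\ it would settle the free group factor isomorphism problem; multiplicity independence in the almost periodic nontracial case is true only because of Shlyakhtenko's free absorption theorems (of the type $T_\lambda \ast T_\lambda \cong T_\lambda$, $T_\lambda \ast L(\F_\infty) \cong T_\lambda$, $T_\lambda \ast T_\mu \cong T_{\Gamma'}$ with $\Gamma'$ the group generated by $\lambda,\mu$), whose proofs are exactly the hard matricial-model and Dykema--R\u{a}dulescu-type computations. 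Likewise, exhibiting a modular eigenvector with eigenvalue $\lambda\mu$ inside $\mathcal{N}$ does not produce a state-preserving isomorphism with the factor built from a representation having $\lambda\mu$ in its spectrum; that step again \emph{is} the absorption theorem, not a consequence of it. The second gap is the centralizer: the operators $l(e_\lambda) + \lambda^{1/2} l(\overline{e_\lambda})^*$ are indeed $\varphi_U$-eigenvectors, but their polar parts and moduli are not free semicircular families (the modulus of a generalized circular element has a free Poisson-type, not semicircular, law), and the real issue — that the elements you can write down generate the whole centralizer and that the resulting algebra is $L(\F_\infty)$ — is one of the main theorems of \cite{shlya97}, obtained through the matricial model and a discrete-decomposition/compression analysis, not through ``a density computation based on the eigenvalue decomposition of $L^2(\mathcal{N})$.'' As it stands, your sketch assumes precisely the deep parts of Shlyakhtenko's work in the two places where the theorem is nontrivial.
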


Let $K_{\R}$ be an infinite dimensional separable real Hilbert space, and let $0 < \lambda < 1$. We define on $K_\R \oplus K_\R$ the following one-parameter family of orthogonal transformations:
\begin{equation}\label{periodic}
U^\lambda_t = \begin{pmatrix}
\cos(t\log \lambda) & - \sin(t\log \lambda) \\
\sin(t\log \lambda) & \cos(t\log \lambda)
\end{pmatrix}.
\end{equation}
\begin{nota}\label{Tlambda}
\emph{Write $(T_{\lambda}, \varphi_{\lambda}) := \Gamma(H_{\R}, U^\lambda_t)''$ where  $(U^\lambda_t)$ is given by Equation $(\ref{periodic})$. It is (up to state-preserving isomorphism) the only free Araki-Woods factor of type ${\rm III_\lambda}$.}
\end{nota}

\begin{nota}\label{araki}
\emph{More generally, for any nontrivial countable subgroup $\Gamma \subset \R^*_+$, we shall denote by $(T_{\Gamma}, \varphi_{\Gamma})$ the unique (up to state-preserving isomorphism) almost periodic free Araki-Woods factor whose $\Sd$ invariant is exactly $\Gamma$. Of course, $\varphi_{\Gamma}$ is its free quasi-free state. If $\Gamma = \lambda^{\Z}$ for $\lambda \in ]0, 1[$, then $(T_{\Gamma}, \varphi_{\Gamma})$ is of type ${\rm III_{\lambda}}$; in this case, it will be simply denoted by $(T_{\lambda}, \varphi_{\lambda})$, as in Notation $\ref{Tlambda}$. Theorem $6.4$ in \cite{shlya97} gives the following formula:}
$$(T_{\Gamma}, \varphi_{\Gamma}) \cong \displaystyle{\mathop{\ast} _{\gamma \in \Gamma} (T_{\gamma}, \varphi_{\gamma})}.$$
\end{nota}

\subsection{Haagerup Property for Groups and Finite von Neumann Algebras}

Remind that a countable group $G$ is said to have the \emph{Haagerup property} if there exists a sequence $(\varphi_n)$ of normalized (i.e. $\varphi_n(1) = 1$, $\forall n \in \N$) positive definite functions on $G$ such that each $\varphi_n$ vanishes at infinity and $\lim_{n \to \infty} \varphi_n(g) = 1$, $\forall g \in G$. This property was proven by Haagerup in \cite{haa} for the free groups $\mathbf{F}_n$, $2 \leq n \leq \infty$. Other examples include $\SL(2, \Z)$, and more generally $\SL(2, \mathbf{F})$ for any number field $\mathbf{F}$ (see \cite{haagerup} for a more comprehensive list of groups with the Haagerup property).

This notion can be extended to finite von Neumann algebras. Let $(N, \tau)$ be a finite von Neumann algebra. Let $\phi : N \to N$ be a completely positive map, and assume that there exists $c > 0$ such that $\tau \circ \phi \leq c \tau$. Let $T_\phi$ be the linear operator on $L^2(N, \tau)$ defined by 
\begin{equation*}
T_\phi \widehat{x} = \widehat{\phi(x)}, \forall x \in N.
\end{equation*}
We can check that $T_\phi$ is bounded and precisely $\|T_\phi\| \leq c \|\phi(1)\|$. We have the following definition:
\begin{df} [Choda, \cite{choda}]
\emph{Let $N$ be a finite von Neumann algebra. The von Neumann algebra $N$ is said to have the \emph{Haagerup property} if there exist a faithful normal trace $\tau$ on $N$ and a sequence of normal completely positive maps $\phi_n : N \to N$ such that }
\begin{enumerate}
\item $\tau \circ \phi_n \leq \tau$, $\phi_n(1) \leq 1$, $\forall n \in \N$;
\item \emph{the corresponding operator $T_{\phi_n}$ on $L^2(N, \tau)$ is compact, $\forall n \in \N$;}
\item $\| \phi_n(x) - x \|_2 \to 0$, $\forall x \in N$.
\end{enumerate}
\end{df}

It was shown by Jolissaint in \cite{jolissaint} that this property does not depend on the faithful normal trace on $N$, i.e. if $N$ has the Haagerup property, then for any faithful normal trace $\tau$ on $N$, there exists a sequence $(\phi_n)$ of completely positive maps $\phi_n : N \to N$ such that conditions $(1-3)$ are satisfied. It was proven in \cite{choda} that a group $G$ has the Haagerup property iff $L(G)$ has. If $N$ has the Haagerup property, and $p$ is a nonzero projection in $N$, then $pNp$ has the Haagerup property.

Any amenable finite von Neumann algebra has the Haagerup property. Moreover, any interpolated free group factor $L(\mathbf{F}_t)$ ($1 < t \leq \infty$) has the Haagerup property \cite{{dykema94}, {radulescu1994}}.

\subsection{Relative Property $(T)$ for Groups and Finite von Neumann Algebras} 

Let $G$ be a countable discrete group and let $H \subset G$ be a subgroup. The pair $(G, H)$ is said to have the \emph{relative property} $(T)$ if one of the following equivalent conditions is satisfied \cite{HV}:
\begin{enumerate}
\item [$\bullet$] Any unitary representation $\pi$ of $G$ which has almost invariant vectors, has a nonzero $H$-invariant vector.
\item [$\bullet$] Whenever a sequence $(\varphi_n)$ of normalized, positive definite functions on $G$ converges to $1$, then it converges to $1$ uniformly on $H$.
\end{enumerate}
A group $G$ has \emph{property} $(T)$ if the pair $(G, G)$ has the relative property $(T)$. As we mentioned in the introduction, the pair $(\Z^2 \rtimes \SL(2, \Z), \Z^2)$ has the relative property $(T)$, and more generally the pair $(\Z^2 \rtimes \Gamma, \Z^2)$ has the relative property $(T)$, for any nonamenable subgroup $\Gamma \subset \SL(2, \Z)$ (see \cite{{burger}, {shalom}}). Other examples were given by Valette \cite{valetterel} and Fern\'os \cite{fernos}. Remind that $\SL(n, \Z)$ has property $(T)$, for every $n \geq 3$. Clearly, relative property $(T)$ is an obstruction to the Haagerup property; more precisely if the group $G$ contains an infinite subgroup $H$ with the relative property $(T)$,  then $G$ cannot have the Haagerup property.

Property $(T)$ was defined by Connes and Jones in \cite{CJ} for type ${\rm II_1}$ factors. More generally, in \cite{popa2001}, Popa defined the relative property $(T)$ for inclusions of finite von Neumann algebras. It naturally involves the language of correspondences and completely positive maps.
\begin{term}
\emph{
Let $(P, \tau)$ be a finite von Neumann algebra endowed with a faithful normal trace. Let $\mathcal{H}$ be a $P$-$P$ bimodule. Let $(\xi_n)$ be a sequence of unit vectors in $\mathcal{H}$. We say that 
\begin{enumerate}
\item [$\bullet$] $(\xi_n)$ is almost central if $\|x \xi_n - \xi_n x\| \to 0$, $\forall x \in P$.
\item [$\bullet$] $(\xi_n)$ is almost $\tau$-tracial if $\| \langle \cdot \xi_n, \xi_n \rangle - \tau\| \to 0$ and $\| \langle \xi_n \cdot, \xi_n \rangle - \tau \| \to 0$.
\end{enumerate}
Let $Q \subset P$ be a von Neumann subalgebra. A unit vector $\xi \in \mathcal{H}$ is said to be $Q$-central if $x \xi = \xi x$, $\forall x \in Q$.} 
\end{term}

\begin{df} [Popa, \cite{popa2001}]
\emph{Let $P$ be a finite von Neumann algebra. Let $Q \subset P$ be a von Neumann subalgebra. Denote by $(Q)_1$ the unit ball of $Q$ (w.r.t. the operator norm). The inclusion $Q \subset P$ is said to be \emph{rigid} or to have the \emph{relative property} $(T)$ if one of the following equivalent conditions holds:}
\begin{enumerate}
\item \emph{There exists a faithful normal trace $\tau$ on $P$, such that for any $P$-$P$ bimodule $\mathcal{H}$, if $\mathcal{H}$ contains a sequence of almost central, almost $\tau$-tracial vectors, then it contains a sequence of $Q$-central, almost $\tau$-tracial vectors.}
\item \emph{There exists a faithful normal trace $\tau$ on $P$, such that for any sequence $(\phi_n)$ of normal completely positive maps $\phi_n : P \to P$, such that for any $n \in \N$, $\phi_n(1) \leq 1$, $\tau \circ \phi_n \leq \tau$, the following holds true:}
\begin{equation*}
\emph{\mbox{if }} \forall x \in P, \| \phi_n(x) - x \|_2 \to 0, \emph{\mbox{ then }} \sup_{x \in (Q)_1} \| \phi_n(x) - x \|_2 \to 0.
\end{equation*}
\item \emph{Condition $(1)$ above is satisfied for any faithful normal trace $\tau$ on $P$.}
\item \emph{Condition $(2)$ above is satisfied for any faithful normal trace $\tau$ on $P$.}
\end{enumerate}
\emph{The von Neumann algebra $P$ is said to have \emph{property} $(T)$ if the inclusion $P \subset P$ is rigid.}
\end{df}

It was proven in \cite{popa2001} that the pair $(G, H)$ has the relative property $(T)$ iff the inclusion $L(H) \subset L(G)$ is rigid. This notion of rigid inclusion behaves well w.r.t. compressions. Namely, take $q \in Q$ a nonzero projection. If the inclusion $Q \subset P$ is rigid, then the inclusion $qQq \subset qPq$ is rigid \cite{popa2001}. Finally, we remind the following theorem which will be needed in Section $\ref{fun}$:  for finite von Neumann algebras, relative property $(T)$ is an obstruction to Haagerup property. 

\begin{theo}[Popa, \cite{popa2001}]\label{haagfac}
Let $P, Q$ be finite von Neumann algebras. Assume that $Q$ is diffuse and $Q \subset P$ is a rigid inclusion. Then $P$ cannot have the Haagerup property.
\end{theo}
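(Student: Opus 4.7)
The plan is a direct contradiction argument that plays the rigidity of $Q \subset P$ against the compactness coming from the Haagerup property. Suppose, toward contradiction, that $P$ has the Haagerup property. Fix any faithful normal trace $\tau$ on $P$; by Jolissaint's result quoted above, there exists a sequence $\phi_n : P \to P$ of normal completely positive maps with $\phi_n(1) \leq 1$, $\tau \circ \phi_n \leq \tau$, such that $T_{\phi_n}$ is compact on $L^2(P, \tau)$ for every $n$, and $\|\phi_n(x) - x\|_2 \to 0$ for every $x \in P$.

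Next I would feed this sequence into the rigidity hypothesis. Since $Q \subset P$ is rigid, condition $(4)$ in the definition applied to the trace $\tau$ above upgrades the pointwise convergence on $P$ to uniform convergence on the unit ball of $Q$:
\begin{equation*}
\sup_{x \in (Q)_1} \|\phi_n(x) - x\|_2 \longrightarrow 0 \quad \text{as } n \to \infty.
\end{equation*}
In particular, choosing $n$ so that the supremum is $< 1/2$, one has $\|\phi_n(u)\|_2 \geq 1/2$ for every unitary $u \in Q$ (since $\|u\|_2 = 1$).

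The diffuseness of $Q$ now supplies the contradiction. Since $Q$ is diffuse, it contains a diffuse abelian von Neumann subalgebra, and hence a sequence of unitaries $u_k \in Q$ with $u_k \to 0$ $\sigma$-weakly (for example the Haar unitaries of a copy of $L^\infty(\mathbf{T})$ inside $Q$). Weak $\sigma$-convergence of $u_k$ to $0$ in $P$ implies $\widehat{u_k} \to 0$ weakly in $L^2(P,\tau)$, because $\langle \widehat{u_k}, \widehat{y}\rangle = \tau(y^*u_k) \to 0$ for every $y \in P$. Fix the $n$ chosen in the previous step. Compactness of $T_{\phi_n}$ turns weak convergence into norm convergence, so
\begin{equation*}
\|\phi_n(u_k)\|_2 = \|T_{\phi_n}(\widehat{u_k})\|_2 \longrightarrow 0 \quad \text{as } k \to \infty,
\end{equation*}
which contradicts the lower bound $\|\phi_n(u_k)\|_2 \geq 1/2$ obtained from rigidity.

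I do not expect any serious obstacle: the only mild point is to make sure that the trace used to witness the Haagerup property is the same one used to witness rigidity, which is exactly what conditions $(3)$--$(4)$ of the definition of rigid inclusion guarantee, combined with Jolissaint's independence of the trace for the Haagerup property. Everything else is a standard compact-versus-weakly-null argument.
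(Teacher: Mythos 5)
Your proof is correct, and it is exactly the standard deformation-versus-compactness argument that the paper attributes to Popa \cite{popa2001} (the paper itself only cites the result, giving no proof): feed the subunital, subtracial completely positive maps witnessing the Haagerup property into the rigidity of $Q \subset P$ to get uniform $\|\cdot\|_2$-convergence on $(Q)_1$, then contradict it with a weakly null sequence of unitaries in the diffuse algebra $Q$ (e.g. powers of a Haar unitary, which are orthonormal in $L^2(P,\tau)$) and the compactness of $T_{\phi_n}$. The only points worth making explicit are the ones you already flag: Jolissaint's trace-independence (or condition $(4)$ of rigidity) reconciles the two traces, and weak nullity of $\widehat{u_k}$ follows from orthonormality of the $\widehat{u_k}$ in $L^2$.
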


\section{Main Examples of (S-)Malleable Freely Mixing Actions}\label{exa}

\subsection{Bogoliubov Shifts on the Almost Periodic Free Araki-Woods Factors}
Let $G$ be any countable discrete group, let $H_\R$ be a separable real Hilbert space of infinite dimension. Let $\pi : G \to O(H_\R)$ be an orthogonal representation. Let $(U_t)$ be an almost periodic orthogonal representation of $\R$ on $H_\R$ and denote by  $(\mathcal{N}, \varphi) = (\Gamma(H_\R, U_t)'', \varphi_U)$ the associated free Araki-Woods factor. We shall always assume that $\pi$ and $(U_t)$ commute. Remind that the representation $\pi$ is said to be $C_0$, if for any $\xi, \eta \in H_\R$, $\langle \pi(g)\xi, \eta\rangle \to 0$, as $g \to \infty$. The following construction gives an example where $\pi$ and $(U_t)$ commute.

\begin{exam}
\emph{Let $\rho : G \to O(K_\R)$ be any orthogonal representation of $G$ on an infinite dimensional separable real Hilbert space $K_\R$. Let $\Gamma \subset \R^*_+$ be a countable subgroup. Let $(U^\Gamma_t)$ be the orthogonal representation on $H_\R = \bigoplus_{\gamma \in \Gamma}(K_\R \oplus K_\R)$ defined by $U_t^\Gamma = \bigoplus_{\gamma \in \Gamma} U^\gamma_t$ (see Notation $\ref{Tlambda}$). We know that $(T_\Gamma, \varphi_\Gamma) \cong (\Gamma(H_\R, U^\Gamma_t)'', \varphi_{U^\Gamma})$.  Consider now $\pi = \bigoplus_{\gamma \in \Gamma}(\rho \oplus \rho)$ on $H_\R$. It is clear that the representations $\pi$ and $(U_t^\Gamma)$ commute. Note that if the representation $\rho$ is $C_0$, then $\pi$ is $C_0$.}
\end{exam}

Denote by $H_\C$ the complexified Hilbert space of $H_\R$. The complexified representations are still denoted by $\pi$ and $(U_t)$. Denote by $A$ the infinitesimal generator of $(U_t)$ on $H_\C$: 
\begin{equation*}
U_t = A^{it}, \forall t \in \R.
\end{equation*}
By functional calculus, we know that the (unbounded) operator $A$ is affiliated with the von Neumann algebra $\pi(G)'$. Thus the (bounded) operator $\frac{2}{1 + A^{-1}}$ belongs to $\pi(G)'$. Consequently, for any $g \in G$ and any $\xi, \eta \in H_\C$, we have
\begin{eqnarray*}
\langle \pi(g)\xi, \pi(g)\eta \rangle_U & = & \langle \frac{2}{1 + A^{-1}}\pi(g)\xi, \pi(g)\eta \rangle \\
& = & \langle \pi(g)\frac{2}{1 + A^{-1}}\xi, \pi(g)\eta \rangle \\
& = & \langle \frac{2}{1 + A^{-1}}\xi, \eta \rangle \\
& = & \langle \xi, \eta \rangle_U.
\end{eqnarray*}
Thus, the representation $\pi$ is unitary w.r.t. the inner product $\langle \cdot, \cdot \rangle_U$. Note that if $\pi$ is $C_0$ w.r.t. the inner product $\langle \cdot , \cdot \rangle$, then $\pi$ is still $C_0$ w.r.t. the inner product $\langle \cdot , \cdot \rangle_U$. Denote as before, by $H$ the completion of $H_\C$ w.r.t.  $\langle \cdot, \cdot \rangle_U$. For any $g \in G$, set
\begin{equation*}
w_g = 1 \oplus \bigoplus_{n \geq 1} \pi(g)^{\otimes n}.
\end{equation*}
It is clear that for every $g \in G$, $w_g$ is a unitary of $\mathcal{F}(H)$, the full Fock space of $H$. For every $g \in G$, write $\sigma_g^\pi = \Ad(w_g)$. If no confusion is possible, $\sigma_g^\pi$ will be simply denoted by $\sigma$. It is straightforward to check that for any $\xi \in H$, $\sigma_g(l(\xi)) = l(\pi(g)\xi)$. Therefore, $(\sigma_g)$ defines an action on $(\mathcal{N}, \varphi) := (\Gamma(H_\R, U_t)'', \varphi_U)$, called the \emph{free Bogoliubov shift}. Moreover, since $w_g\Omega = \Omega$, $\forall g \in G$, the action $(\sigma_g)$ is $\varphi$-preserving.

\begin{prop}\label{mallea}
The action $(\sigma_g)$ is s-malleable for $\varphi$.
\end{prop}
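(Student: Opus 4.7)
The plan is to realize the free product $(\mathcal{N}\ast\mathcal{N},\varphi\ast\varphi)$ itself as a free Araki-Woods factor, and then to construct $\alpha_t$ and $\beta$ as second quantizations of orthogonal transformations on the doubled real Hilbert space which commute with $\pi$ and with $(U_t)$. By Shlyakhtenko's free product formula for free Araki-Woods factors (Notation~\ref{araki} and the results of \cite{shlya97}), there is a state-preserving isomorphism
\begin{equation*}
(\mathcal{N}\ast\mathcal{N},\,\varphi\ast\varphi)\;\cong\;\bigl(\Gamma(H_\R\oplus H_\R,\,U_t\oplus U_t)'',\,\varphi_{U\oplus U}\bigr),
\end{equation*}
which identifies $\sigma_g\ast\sigma_g$ with the free Bogoliubov shift associated with $\pi\oplus\pi$ on $H_\R\oplus H_\R$.

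On $H_\R\oplus H_\R$, I would consider the continuous one-parameter family of orthogonal transformations
\begin{equation*}
R_t\;=\;\begin{pmatrix}\cos(\pi t/2)\,\mathrm{id} & -\sin(\pi t/2)\,\mathrm{id}\\[2pt]\sin(\pi t/2)\,\mathrm{id} & \phantom{-}\cos(\pi t/2)\,\mathrm{id}\end{pmatrix},
\end{equation*}
together with the involution $J=\mathrm{id}\oplus(-\mathrm{id})$. Since both $R_t$ and $J$ act only on the direct-sum coordinates and trivially on each factor, they commute with $U_t\oplus U_t$ and with $\pi(g)\oplus\pi(g)$ for every $t,g$. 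Commuting with $U_t\oplus U_t$ implies that their complexifications preserve the deformed inner product $\langle\cdot,\cdot\rangle_{U\oplus U}$, hence extend to unitaries on the completed Hilbert space $H\oplus H$.

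Next I would second-quantize: set $\alpha_t=\Ad\bigl(1\oplus\bigoplus_{n\geq 1}R_t^{\otimes n}\bigr)$ and $\beta=\Ad\bigl(1\oplus\bigoplus_{n\geq 1}J^{\otimes n}\bigr)$ on the full Fock space $\mathcal{F}(H\oplus H)$. Both fix the vacuum, so they preserve the free quasi-free state and define, through the identification above, automorphisms of $(\mathcal{N}\ast\mathcal{N},\varphi\ast\varphi)$; strong continuity of $t\mapsto\alpha_t$ follows from strong continuity of $t\mapsto R_t$. Commutation with $\sigma_g\ast\sigma_g$ is immediate from the block-diagonal form of $\pi(g)\oplus\pi(g)$. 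Because $R_1$ swaps the two coordinates (sending $(\xi,0)$ to $(0,\xi)$), one has $\alpha_1(a\ast 1)=1\ast a$ for every $a\in\mathcal{N}$. A direct $2\times 2$ calculation gives $JR_tJ=R_{-t}$ and $J^2=\mathrm{id}$, which second-quantize to $\beta\alpha_t=\alpha_{-t}\beta$ and $\beta^2=\id$; finally, since $J$ fixes the first copy of $H_\R$ pointwise, $\beta$ fixes $\mathcal{N}\ast\C$ pointwise, so $\mathcal{N}\ast\C\subset(\mathcal{N}\ast\mathcal{N})^\beta$.

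The only mildly delicate point is the first step, namely checking that the free product of two copies of $(\Gamma(H_\R,U_t)'',\varphi_U)$ is state-isomorphic to $\Gamma(H_\R\oplus H_\R,U_t\oplus U_t)''$ with its free quasi-free state in a way that sends $a\ast 1$ to $s(\xi_1)$-words from the first summand and $1\ast a$ to $s(\xi_2)$-words from the second. Once this identification is in place everything else reduces to the standard rotation/reflection construction on the real Hilbert-space level, exactly as in the tracial case; the presence of $(U_t)$ is harmless because $R_t$ and $J$ commute with it. I expect no other serious obstacle.
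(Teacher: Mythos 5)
Your proposal is correct and is essentially the paper's own argument: the same state-preserving identification $(\mathcal{N}\ast\mathcal{N},\varphi\ast\varphi)\cong\Gamma(H_\R\oplus H_\R,U_t\oplus U_t)''$ from \cite{shlya97}, the same rotation by angle $\pi t/2$ of the two summands for $\alpha_t$, and the same reflection $\id\oplus(-\id)$ for $\beta$, with identical verifications of the commutation relations. Phrasing $\alpha_t$ and $\beta$ as second quantizations on the full Fock space rather than directly on the generators $s(\xi\oplus\eta)$ is only a cosmetic difference.
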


\begin{proof}
From \cite{shlya97}, we know that $\Gamma(H_\R \oplus H_\R, U_t \oplus U_t)'' \cong (\mathcal{N}, \varphi) \ast (\mathcal{N}, \varphi)$. Consider on the real Hilbert space $H_\R \oplus H_\R$, the following family of orthogonal elements:
\begin{equation*}
V_t = \begin{pmatrix}
\cos(\frac{\pi}{2}t) & -\sin(\frac{\pi}{2}t) \\
\sin(\frac{\pi}{2}t) & \cos(\frac{\pi}{2}t)
\end{pmatrix}, \forall t \in \R.
\end{equation*}
It is clearly an orthogonal representation of $\R$ on $H_\R \oplus H_\R$. Consider now the canonical action $(\alpha_t)$ on $(\mathcal{N}, \varphi) \ast (\mathcal{N}, \varphi)$ associated with $(V_t)$:
\begin{equation*}
\alpha_t(s(\begin{pmatrix}
\xi \\ \eta \end{pmatrix})) =  s(V_t\begin{pmatrix}
\xi \\ \eta \end{pmatrix}), \forall t \in \R, \forall \xi, \eta \in H_\R.
\end{equation*}
We can easily see that $(V_t)$ commutes with $(U_s \oplus U_s)$ and with $\pi \oplus \pi$; consequently, the action $(\alpha_t)$ is $\varphi \ast \varphi$-preserving and commute with the diagonal action $(\sigma_g \ast \sigma_g)$. Moreover, $\alpha_1(a \ast 1) = 1 \ast a$, for every $a \in \mathcal{N}$. At last,  consider the automorphism $\beta$ defined on $(\mathcal{N}, \varphi) \ast (\mathcal{N}, \varphi)$ by:
\begin{equation*}
\beta(s(\begin{pmatrix}
\xi \\ \eta \end{pmatrix})) = s(\begin{pmatrix}
\xi \\ -\eta \end{pmatrix}), \forall \xi, \eta \in H_\R. 
\end{equation*}
It is straightforward to check that $\beta$ commutes with the diagonal action $(\sigma_g \ast \sigma_g)$, $\beta^2 = \Id$, $(\mathcal{N} \ast \C) \subset (\mathcal{N} \ast \mathcal{N})^\beta$, and $\beta\alpha_{t} = \alpha_{-t}\beta$, $\forall t \in \R$. We are done.
\end{proof}

\begin{prop}
The action $(\sigma_g)$ is freely mixing for the state $\varphi$ if and only if the representation $\pi$ is $C_0$.
\end{prop}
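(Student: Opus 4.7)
For the ``only if'' direction I would specialise the definition of free mixing to $n = 1$, $y_0 = s(\eta)$, $x_1 = s(\xi)$, $y_1 = 1$ for arbitrary $\xi,\eta \in H_\R$. By formula $(\ref{formula})$,
\[
\varphi(s(\eta)\sigma_g(s(\xi))) = \varphi(s(\eta)\,s(\pi(g)\xi)) = \tfrac{1}{4}\langle \eta, \pi(g)\xi\rangle_U,
\]
which must tend to $0$. Taking real parts and using that $\operatorname{Re}\langle\cdot,\cdot\rangle_U = \langle\cdot,\cdot\rangle$ on $H_\R\times H_\R$ forces $\langle\eta,\pi(g)\xi\rangle\to 0$ for all real vectors, hence $\pi$ is $C_0$.

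For the converse, assume $\pi$ is $C_0$. First I would upgrade this to $\pi$ being $C_0$ on $(H, \langle\cdot,\cdot\rangle_U)$: since $\pi$ commutes with the generator $A$ of $(U_t)$ on $H_\C$, it commutes with the bounded operator $\tfrac{2}{1+A^{-1}}$, so $\langle\pi(g)\xi,\eta\rangle_U = \langle \pi(g)\tfrac{2}{1+A^{-1}}\xi,\eta\rangle\to 0$. The plan is then to expand $\varphi(y_0\sigma_{g_1}(x_1)y_1\cdots\sigma_{g_n}(x_n)y_n)$ as a finite sum of products of $\langle\cdot,\cdot\rangle_U$-inner products on $H$ and show each summand decays. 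For this I would use the Wick operators $W(\zeta)\in\mathcal{N}$ of free probability, the unique bounded elements (polynomials in the $s(h)$'s) characterised by $W(\zeta)\Omega = \zeta$ for $\zeta$ in the algebraic Fock space. They satisfy $W(\zeta)\in\mathcal{N}\ominus\C$ whenever $\zeta\perp\Omega$, and $\sigma_g(W(\zeta_1\otimes\cdots\otimes\zeta_k)) = W(\pi(g)\zeta_1\otimes\cdots\otimes\pi(g)\zeta_k)$ because $\sigma_g$ acts on $\mathcal{F}(H)$ as $1\oplus\bigoplus_k\pi(g)^{\otimes k}$. Using inequality $(\ref{inequa})$ together with a standard truncation-in-the-Fock-decomposition and multilinearity argument, one reduces to the case where each $x_i = W(\zeta_i)$ and each $y_j = W(\eta_j)$ is a single Wick word, with $\zeta_i\in H^{\otimes k_i}$, $k_i\geq 1$, and $\eta_j\in H^{\otimes l_j}$, $l_j\geq 1$ for $1\leq j\leq n-1$ (while $l_0, l_n \geq 0$ are allowed, the value $0$ corresponding to $y_j = 1$).

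Iterating the free Wick product formula
\[
W(\xi_1\otimes\cdots\otimes\xi_p)\,W(\eta_1\otimes\cdots\otimes\eta_q) = \sum_{k=0}^{\min(p,q)}\Bigl(\prod_{j=1}^{k}\langle \xi_{p-j+1}, \eta_j\rangle_U\Bigr)\,W(\xi_1\otimes\cdots\otimes\xi_{p-k}\otimes\eta_{k+1}\otimes\cdots\otimes\eta_q)
\]
expands $W(\eta_0)W(\pi(g_1)\zeta_1)W(\eta_1)\cdots W(\pi(g_n)\zeta_n)W(\eta_n)$ as a finite sum of terms $\Lambda_P\cdot W(\zeta_P)$ indexed by noncrossing pairings $P$ of the concatenated tensor positions in which \emph{no pair is confined to a single block}, where $\Lambda_P = \prod_{(a,b)\in P}\langle v_a,v_b\rangle_U$, each vector originating from a $\zeta_i$-block carrying the extra factor $\pi(g_i)$. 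Applying $\varphi = \langle\,\cdot\,\Omega,\Omega\rangle_U$ kills all but the full-contraction terms, leaving the finite sum $\sum_{P\text{ full}}\Lambda_P$.

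The combinatorial crux is elementary: the block sequence $\eta_0\mid\zeta_1\mid\eta_1\mid\zeta_2\mid\cdots\mid\zeta_n\mid\eta_n$ alternates strictly between $\eta$-type and $\zeta$-type blocks, and any full noncrossing pairing contains an innermost pair, necessarily consisting of two adjacent positions; since within-block pairs are excluded, this innermost pair straddles two consecutive blocks of \emph{opposite} type. Hence $\Lambda_P$ always contains at least one factor of the form $\langle\pi(g_i)v,w\rangle_U$ with $w$ originating from an $\eta$-block (and therefore untouched by $\pi$), and this factor vanishes as $g_i\to\infty$ by the $C_0$ property; the remaining factors of $\Lambda_P$ are uniformly bounded by Cauchy--Schwarz, so $\Lambda_P\to 0$ as soon as some $g_i\to\infty$, and the finite sum $\sum_P\Lambda_P$ accordingly tends to $0$. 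The hard part will be the density/truncation reduction to Wick words of fixed tensor degree together with the bookkeeping that identifies the iterated Wick expansion with a sum over noncrossing no-within-block pairings; once those foundations are laid, the ``innermost pair is of mixed type'' observation supplies the decay.
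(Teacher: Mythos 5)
Your ``only if'' argument is the same short computation as the paper's (degree-two moments together with $\Re\langle\cdot,\cdot\rangle_U=\langle\cdot,\cdot\rangle$ on $H_\R$), but your converse takes a genuinely different, and viable, route. The paper reduces to $x_i,y_j$ words in the $s(h)$'s and applies the moment formula $(\ref{formula})$ to the whole alternating product; it then splits the noncrossing pairings into those containing a mixed pair (one leg from a $\sigma_{g_i}(x_i)$-block, one from a $y_j$-block), which are killed by the $C_0$ hypothesis, and those with no mixed pair, which it handles by resumming over blocks that are entirely paired within themselves, producing factors $\varphi(z_{k_1})\cdots\varphi(z_{k_r})=0$. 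You instead reduce to Wick words of degree at least one; their products expand with \emph{no} within-block contractions, so after applying $\varphi$ only full noncrossing pairings without within-block pairs survive, and your innermost-pair observation (an innermost pair has adjacent legs, hence straddles two consecutive, strictly alternating, nonempty blocks) shows each surviving term contains a mixed factor of the form $\langle\pi(g_i)v,w\rangle_U$ up to conjugation, so every term of the finite sum decays and the paper's second case never arises. What each approach buys: the paper needs nothing beyond $(\ref{formula})$, at the price of the grouping/resummation argument and of the implicit requirement that the blocks entering it be centred; your route front-loads standard Wick machinery --- existence and $\sigma_g$-covariance of $W(\zeta)$, the multiplication/mixed-moment formula, and the truncation reducing centred elements to finite sums of Wick words with eigenvector letters --- after which the combinatorics is shorter and centredness is automatic, since Wick words of degree $\geq 1$ lie in $\mathcal{N}\ominus\C$. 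Two caveats, neither fatal: in the non-tracial Araki-Woods setting the annihilation parts involve the conjugation on $H_\C$, so the contraction coefficients in your product formula read $\langle\eta_j,\overline{\xi_{p-j+1}}\rangle_U$ rather than $\langle\xi_{p-j+1},\eta_j\rangle_U$; this is harmless because $\pi(g)$ commutes with the conjugation, so the decaying factor is still a $U$-inner product of $\pi(g_i)(\mbox{fixed vector})$ against a fixed vector. And the truncation step you defer does require the eigenvector/Kaplansky bookkeeping via inequality $(\ref{inequa})$, but the paper dispatches its analogous reduction (``it suffices to consider words'') in one sentence, so leaving yours at that level of detail is consistent with the original.
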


\begin{proof}
Assume first that $(\sigma_g)$ is freely mixing. Define $x = s(\xi), y = s(\eta)$ for $\xi, \eta \in H_\R$. We know that $\varphi(x) = \varphi(y) = 0$. Moreover, we have
\begin{eqnarray*}
\langle \pi(g)\xi, \eta\rangle & = & 4 \varphi\left(s(\pi(g)\xi) s(\eta)\right) \\
& = & 4 \varphi \left(\sigma_g(s(\xi))s(\eta)\right) \\
& = & 4 \varphi(\sigma_g(x)y).  
\end{eqnarray*}
Consequently, $\langle \pi(g)\xi, \eta\rangle \to 0$, as $g \to \infty$ and the representation $\pi$ is $C_0$.

Conversely, assume now that the representation $\pi$ is $C_0$. We have to prove that for any $n \in \N^*$, and for any $x_1, \dots, x_n$, $y_0, \dots, y_n \in~\mathcal{N} \ominus~\C$, except possibly $y_0$ and/or $y_n$ are equal to $1$,
\begin{equation*}
\lim_{g_1, \dots, g_n \to \infty} \varphi(y_0\sigma_{g_1}(x_1)y_1 \cdots \sigma_{g_n}(x_n)y_n) = 0.
\end{equation*}
It suffices to show this property for $x_i$ and $y_j$ words in $s(h_\iota)$. For $1 \leq i \leq n$ and $0 \leq j \leq n$, take
\begin{eqnarray*}
x_i & = & s(\xi^i_1) \cdots s(\xi^i_{r_i}),\\
y_j & = & s(\eta^j_1) \cdots s(\eta^j_{s_j}).
\end{eqnarray*}
Then for any $1 \leq i \leq n$ and any $g_1, \dots, g_n \in G$, 
\begin{equation*}
\sigma_{g_i}(x_i) = s(\pi(g_i)\xi^i_1) \cdots s(\pi(g_i)\xi^i_{r_i}).
\end{equation*}
For $1 \leq i \leq n$ and $0 \leq j \leq n$, define
\begin{eqnarray*}
K_{2i - 1} & = & \{1, \dots, r_i\} \\
K_{2j} & = & \{1, \dots, s_j\}.
\end{eqnarray*}
Let $m = \sum_i r_i + \sum_j s_j$. We will use the following identification
\begin{equation*}
\{1, \dots, m\} = K_0 \sqcup K_1 \sqcup K_2 \sqcup \cdots \sqcup K_{2n - 1} \sqcup K_{2n}.
\end{equation*}

If $m$ is odd, there is nothing to prove. If $m$ is even, then from Equation $(\ref{formula})$, we know that
\begin{equation}\label{formula2}
\varphi(y_0\sigma_{g_1}(x_1)y_1 \cdots \sigma_{g_n}(x_n)y_n) = 2^{-m}\sum_
{(\{\beta_i, \gamma_i\}) 
 \in NC(m),  
\beta_i < \gamma_i}
\prod_{k = 1}^{m/2}\langle h_{\beta_k}, h_{\gamma_k}\rangle_U,
\end{equation}
where the letter $h$ stands for $\eta$ or $\pi(g)\xi$. Let $\nu = (\{\beta_i, \gamma_i\}) \in NC(m)$. Write $z_{2i - 1} = x_i, z_{2j}  = y_j$, for $i \in \{1, \dots, n\}, j \in \{0, \dots, n\}$.
\begin{itemize}

\item [$(a)$] Either there exist for the noncrossing pairing $\nu$, some $i \in \{1, \dots, n\}$, $j \in \{0, \dots, n\}$, $p \in K_{2i - 1}$, and $q \in K_{2j}$ such that $p, q$ are in the same class. Thus, the inner product $\langle \pi(g_i)\xi^i_p, \eta^j_q \rangle_U$ or $\overline{\langle \pi(g_i)\xi^i_p, \eta^j_q \rangle_U}$ necessarily appears in the product $\prod_{k = 1}^{m/2}\langle h_{\beta_k}, h_{\gamma_k}\rangle_U$ of Equation $(\ref{formula2})$. Since the representation $\pi$ is $C_0$, $\langle \pi(g_i)\xi^i_p, \eta^j_q \rangle_U \to 0$, as $g_i \to \infty$.

\item [$(b)$] Or for any $p, q \in \{1, \dots, m\}$, if $p, q$ are in the same class, then it means that  necessarily $p \in K_{2i - 1}$, $q \in K_{2i' - 1}$ or $p \in K_{2j}$, $q \in K_{2j'}$ for some $i, i', j, j'$. Then there must exist a maximal $r \geq 1$ and integers $0 \leq k_1 < \cdots < k_r \leq 2n$, such that when one restricts the noncrossing pairing $\nu$ to the subsets $K_{k_1}, \dots, K_{k_r}$, one still has a noncrossing pairing on each of those subsets. Now if we sum up over all the noncrossing pairings $\nu'$ of $\{1, \dots, m\}$ such that $\nu'$ and $\nu$ agree on $\{1, \dots, m\} \backslash (K_{k_1} \sqcup \cdots \sqcup K_{k_r})$, and $\nu'$ can be any noncrossing pairing on each of the subsets $K_{k_1}, \dots, K_{k_r}$, we obtain
\begin{equation}\label{parsum}
2^{-m} \sum_{\nu'} \prod_{k = 1}^{m/2}\langle h_{\beta_k}, h_{\gamma_k}\rangle_U = C \varphi(z_{k_1}) \cdots \varphi(z_{k_r}),
\end{equation}
with the constant $C$ given by
\begin{equation*}
C = 2^{-m'} \prod_{\{(\beta_k, \gamma_k)\} \in \nu^c} \langle h_{\beta_k}, h_{\gamma_k}\rangle_U,
\end{equation*}
where $\nu^c$ denotes the restriction of $\nu$ on $\{1, \dots, m\} \backslash (K_{k_1} \sqcup \cdots \sqcup K_{k_r})$ and $m'$ is the cardinality of $\{1, \dots, m\} \backslash (K_{k_1} \sqcup \cdots \sqcup K_{k_r})$. By choice of $x_i, y_j$, we have $\varphi(z_{k_1}) = \cdots =  \varphi(z_{k_r}) = 0$, so that the sum in Equation $(\ref{parsum})$ is $0$. 
\end{itemize}
Finally if we sum up over all the noncrossing pairings, according to $(a)$ and $(b)$, we are done.
\end{proof}

For further applications (see Section $\ref{fun}$), we shall always take $\pi = \lambda_G$ the left regular representation of $G$ which is $C_0$ as soon as the group $G$ is infinite.

\subsection{Free Bernoulli Shifts}

Let $G$ be any infinite countable discrete group and let $(\mathcal{N}, \varphi)$ be any von Neumann algebra. Write 
\begin{equation*}
(\mathcal{M}, \Phi) = \mathop{\ast}_{g \in G} (\mathcal{N}, \varphi)_g.
\end{equation*}
For any $g, h \in G$ and any $x_h \in \mathcal{N}_h$, let $\sigma_g(x_h) = x_{g^{-1}h} \in \mathcal{N}_{g^{-1}h}$. The action $\sigma$ extends to the whole von Neumann algebra $\mathcal{M}$ and is called the {\it free Bernoulli shift} with base $(\mathcal{N}, \varphi)$. The action $\sigma$ is obviously $\Phi$-preserving.
\begin{prop}
The action $(\sigma_g)$ is freely mixing for $\Phi$.
\end{prop}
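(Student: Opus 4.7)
The plan is to exploit the defining vanishing property of the free product state on reduced alternating words with mean-zero letters. By a density argument---the algebraic free product $\mathcal{M}_{\alg}$ is $\|\cdot\|_2$-dense in $\mathcal{M}$, and by Kaplansky's density theorem we may approximate by uniformly bounded nets---combined with the continuity estimate (\ref{inequa}) for iteratively replacing one factor at a time, it suffices to treat the case in which each $x_i$ and each nontrivial $y_j$ is a reduced word of the form $a_1 \cdots a_k$ with $a_l \in \mathcal{N}_{h_l} \ominus \C$ and consecutive indices $h_l \neq h_{l+1}$.

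Fix such reduced-word decompositions $x_i = x_i^{(1)} \cdots x_i^{(k_i)}$ and $y_j = y_j^{(1)} \cdots y_j^{(m_j)}$, with $x_i^{(l)} \in \mathcal{N}_{h_i^{(l)}} \ominus \C$ and $y_j^{(l)} \in \mathcal{N}_{p_j^{(l)}} \ominus \C$. The Bernoulli shift acts by $\sigma_{g_i}(\mathcal{N}_h) = \mathcal{N}_{g_i h}$, so $\sigma_{g_i}(x_i)$ remains reduced with $l$-th letter lying in $\mathcal{N}_{g_i h_i^{(l)}} \ominus \C$. Concatenating, the product $P := y_0 \sigma_{g_1}(x_1) y_1 \cdots \sigma_{g_n}(x_n) y_n$ is itself reduced as soon as, at each interface between a $y$ and an adjacent $\sigma_{g_i}(x_i)$, the two adjoining letters lie in distinct $\mathcal{N}_h$. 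The interface conditions take the form $g_i h_i^{(1)} \neq p_{i-1}^{(m_{i-1})}$ and $g_i h_i^{(k_i)} \neq p_i^{(1)}$, each of which fails for at most a single value of $g_i$ (and each is vacuous when the corresponding boundary factor $y_0$ or $y_n$ equals $1$, the only permitted trivial positions).

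Hence for $(g_1, \dots, g_n)$ outside a fixed set of the form $\bigcup_{i=1}^n G^{i-1} \times F_i \times G^{n-i}$ with each $F_i \subset G$ finite, $P$ is a genuine reduced word in $\mathcal{M}_{\alg}$ with mean-zero letters, and the defining property of $\Phi = \ast_g \varphi$ forces $\Phi(P) = 0$. This gives exactly the asserted limit in the product-cofinite-filter sense and establishes free mixing. The only subtle step is the density reduction, which is routine given the preliminary bounds of Section \ref{ara}; the combinatorial core is essentially automatic once one commits to reduced-word decompositions, making this proposition notably easier than the Bogoliubov case just treated.
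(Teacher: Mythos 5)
Your argument is correct and takes essentially the same route as the paper's proof: reduce by multilinearity and ($\|\cdot\|_2$-)density to the case of reduced words with mean-zero letters, then note that for each $i$ all but finitely many $g_i$ make the concatenation an alternating reduced word, whence $\Phi$ vanishes by freeness. The only differences are cosmetic: the paper excludes the slightly larger finite set where some shifted index of an $x_i$ equals some index of a $y_j$ (rather than only checking the interfaces), and it uses the convention $\sigma_g(x_h) = x_{g^{-1}h}$.
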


\begin{proof}
For $h \in G$, denote as usual $\mathcal{N}_h \ominus \C = \mathcal{N}_h \cap \ker (\Phi)$. We have to prove that for any $n \in \N^*$, $x_1, \dots, x_n, y_0, \dots, y_n \in \mathcal{M} \ominus \C$, except possibly $y_0$ and/or $y_n$ are equal to~$1$,
\begin{equation*}
\lim_{g_1, \dots, g_n \to \infty} \Phi(y_0\sigma_{g_1}(x_1)y_1 \cdots \sigma_{g_n}(x_n)y_n) = 0.
\end{equation*}
Actually, it suffices to show this property for $x_i$ and $y_j$ of the form
\begin{eqnarray*}
x_i & = & x_{h^i_1} \cdots x_{h^i_{m_i}},\\
y_j & = & y_{k^j_1} \cdots y_{k^j_{n_j}}, 
\end{eqnarray*}
with $1 \leq i \leq n$, $0 \leq j \leq n$ and
\begin{enumerate}
\item $h^i_1 \neq \cdots \neq h^i_{m_i}$ and for $1 \leq p \leq m_i$, $x_{h^i_p} \in \mathcal{N}_{h^i_p} \ominus \C$;
\item $k^j_1 \neq \cdots \neq k^j_{n_j}$ and for $1 \leq q \leq n_j$, $y_{k^j_q} \in \mathcal{N}_{k^j_q} \ominus \C$.
\end{enumerate}
For any $g_1, \dots, g_n \in G$, we have
\begin{equation*}
\sigma_{g_i}(x_i) = x_{(g_i)^{-1}h^i_1} \cdots x_{(g_i)^{-1}h^i_{m_i}}.
\end{equation*}
With all these notations, it is clear now that for any $g_1, \dots, g_n \in G$ large enough, the $(g_i)^{-1}h^i_{p}$'s are pairwise distinct from the $k^j_q$'s. In particular, using the freeness property of the state $\Phi$, for any $g_1, \dots, g_n \in G$ large enough, we get
\begin{equation*}
\Phi(y_0\sigma_{g_1}(x_1)y_1 \cdots \sigma_{g_n}(x_n)y_n) = 0.
\end{equation*}
\end{proof}

\begin{prop}
Take $(\mathcal{N}, \varphi) = (\Gamma(H_\R, U_t)'', \varphi_U)$ an almost periodic free Araki-Woods factor. Then, the associated free Bernoulli shift is s-malleable.
\end{prop}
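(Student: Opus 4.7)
The plan is to identify the free Bernoulli shift with a free Bogoliubov shift on an appropriate almost periodic free Araki-Woods factor, and then invoke Proposition \ref{mallea}. The key input from \cite{shlya97} is the compatibility of the free Araki-Woods construction with direct sums: given orthogonal representations $(U^i_t)$ on real Hilbert spaces $H^i_\R$, one has a state-preserving isomorphism $\Gamma(H^1_\R \oplus H^2_\R, U^1_t \oplus U^2_t)'' \cong \Gamma(H^1_\R, U^1_t)'' \ast \Gamma(H^2_\R, U^2_t)''$. Iterating gives a state-preserving identification
\begin{equation*}
(\mathcal{M}, \Phi) = \mathop{\ast}_{g \in G} (\Gamma(H_\R, U_t)'', \varphi_U)_g \cong (\Gamma(K_\R, V_t)'', \varphi_V),
\end{equation*}
where $K_\R = \ell^2(G, \R) \otimes H_\R \cong \bigoplus_{g \in G} H_\R$ and $V_t = 1 \otimes U_t$, with the $g$-th copy $s(\xi)_g$ corresponding to $s(\delta_g \otimes \xi)$.

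Next, let $\lambda$ be the real left regular representation of $G$ on $\ell^2(G, \R)$, and set $\pi(g) = \lambda_g \otimes \id_{H_\R} \in O(K_\R)$. Since $\pi$ acts trivially on the $H_\R$-factor while $V_t$ acts trivially on the $\ell^2(G, \R)$-factor, $\pi$ and $(V_t)$ commute. On the generators $s(\delta_h \otimes \xi)$, the Bogoliubov shift associated with $\pi$ acts by $s(\delta_h \otimes \xi) \mapsto s(\delta_{gh} \otimes \xi)$, which matches the image of the free Bernoulli shift $\sigma_g$ (up to the usual convention on left versus right multiplication) under the above identification. The two actions thus coincide, and Proposition \ref{mallea} applies directly to yield s-malleability.

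The main obstacle is the bookkeeping required to verify that the isomorphism $\mathcal{M} \cong \Gamma(K_\R, V_t)''$ really intertwines the Bernoulli shift with the Bogoliubov shift induced by $\pi$ — this reduces to checking the statement on the generators $s(\delta_g \otimes \xi)$, which is immediate from tracking through the identification. As an alternative that bypasses the realization as a single free Araki-Woods factor, one can instead use the decomposition $(\mathcal{M} \ast \mathcal{M}, \Phi \ast \Phi) = \mathop{\ast}_{g \in G} (\mathcal{N} \ast \mathcal{N}, \varphi \ast \varphi)_g$ and construct $\alpha_t$ and $\beta$ as free products over $g \in G$ of the local deformations and period-$2$ flips already built in the proof of Proposition \ref{mallea}; the common origin of these local pieces in the rotation and sign-change on $H_\R \oplus H_\R$ makes compatibility with the diagonal shift $\sigma_g \ast \sigma_g$ automatic.
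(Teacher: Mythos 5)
Your proposal is correct, and your primary route is a genuinely different packaging from the paper's. The paper's own proof is exactly the argument you sketch as an alternative: it identifies $(\mathcal{M} \ast \mathcal{M}, \Phi \ast \Phi)$ with the free product over $g \in G$ of copies of $(\mathcal{N} \ast \mathcal{N}, \varphi \ast \varphi)$ and takes the infinite free product of the deformation $(\alpha_t)$ and the period-$2$ flip $\beta$ already constructed in Proposition \ref{mallea}; commutation with the diagonal shift is automatic because every free factor carries the same local pieces. Your main argument instead absorbs the whole Bernoulli system into a single almost periodic free Araki-Woods factor $\Gamma(\ell^2(G, \R) \otimes H_\R, 1 \otimes U_t)''$ via Shlyakhtenko's compatibility of the construction with direct sums, and recognizes the free Bernoulli shift as the free Bogoliubov shift of $\lambda_G \otimes \id$, which commutes with $1 \otimes U_t$, so that Proposition \ref{mallea} applies verbatim. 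The two arguments ultimately produce the same deformation (the free product of the local rotations is precisely the Bogoliubov action of the rotation on $K_\R \oplus K_\R$); what your packaging buys is that commutation with the shift comes for free from $\pi \oplus \pi$ commuting with the rotation, and it makes explicit the identification of free Bernoulli shifts with Bogoliubov shifts w.r.t. the regular representation, which is the picture the paper actually uses in Section \ref{fun}; what it costs is the extra bookkeeping on generators and the infinite-direct-sum version of the free product formula, which the paper's two-line free-product argument avoids. One small convention point: with the paper's definition $\sigma_g(x_h) = x_{g^{-1}h}$ the induced orthogonal transformation of $\ell^2(G, \R)$ is $\delta_h \mapsto \delta_{g^{-1}h}$ rather than $\lambda_g$ itself, but as you note this is immaterial, since Proposition \ref{mallea} only requires that the orthogonal transformations implementing the shift commute with $(U_t)$.
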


\begin{proof}
To check that the free Bernoulli action is s-malleable, it suffices to produce an action $(\alpha_t)$ of $\R$ on $(\mathcal{N}, \varphi) \ast (\mathcal{N}, \varphi)$ and a period $2$ automorphism $\beta$ of $(\mathcal{N}, \varphi) \ast (\mathcal{N}, \varphi)$ such that all the conditions of Definition \ref{malleable} are satisfied. One can then take the infinite free product of these $(\alpha_t)$ and $\beta$. But, we have already proven this property in Proposition~$\ref{mallea}$.
\end{proof}

\begin{prop}
Take $(\mathcal{N}, \varphi) = (M_n(\C), \omega)$, where $\omega$ is any faithful state on $M_n(\C)$. Then, the  associated free Connes-St\o rmer Bernoulli shift is malleable.
\end{prop}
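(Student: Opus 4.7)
My plan is to mirror the proof of Proposition \ref{mallea}. Free mixing is already known from the previous proposition, so only the construction of a malleable deformation must be produced. Exactly as in Proposition \ref{mallea}, one first observes the natural state-preserving identification
\begin{equation*}
(\mathcal{M}, \Phi) \ast (\mathcal{M}, \Phi) \cong \mathop{\ast}_{g \in G} \bigl[(M_n(\C), \omega) \ast (M_n(\C), \omega)\bigr]_g,
\end{equation*}
under which the diagonal action $(\sigma_g \ast \sigma_g)$ is simply the free Bernoulli shift over $G$ with base $(M_n(\C), \omega) \ast (M_n(\C), \omega)$. Consequently, it is enough to exhibit a continuous one-parameter group
\begin{equation*}
\alpha : \R \to \Aut\bigl((M_n(\C), \omega) \ast (M_n(\C), \omega), \omega \ast \omega\bigr)
\end{equation*}
satisfying $\alpha_1(a \ast 1) = 1 \ast a$ for all $a \in M_n(\C)$; the infinite free product $\mathop{\ast}_{g \in G} \alpha$ then provides a continuous action of $\R$ on $(\mathcal{M}, \Phi) \ast (\mathcal{M}, \Phi)$ that commutes with $(\sigma_g \ast \sigma_g)$ by construction and that swaps the two copies of $\mathcal{M}$ at time $1$, exactly as required by Definition \ref{malleable}.

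To construct $\alpha_t$, the natural strategy is to import the Bogoliubov rotation of Proposition \ref{mallea}. Using Shlyakhtenko's matricial model, $(M_n(\C), \omega)$ embeds state-preservingly into the almost periodic free Araki-Woods factor $(T_\Gamma, \varphi_\Gamma)$ where $\Gamma$ is the subgroup of $\R^*_+$ generated by the point spectrum of $\omega$. Then $(M_n(\C), \omega) \ast (M_n(\C), \omega)$ sits inside $(T_\Gamma, \varphi_\Gamma) \ast (T_\Gamma, \varphi_\Gamma) \cong \Gamma(H_\R \oplus H_\R, U_t \oplus U_t)''$, and the orthogonal rotation $V_t$ of Proposition \ref{mallea} defines a continuous $\R$-action on the ambient Araki-Woods factor swapping the two free copies at $t=1$. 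One proposes to set $\alpha_t$ equal to the restriction of this rotation.

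The main obstacle will be precisely the invariance issue: a generic Bogoliubov rotation does not preserve a matrix subalgebra. To get around this, I would work directly with the modular eigenvectors: decompose the matrix units $e_{ij} \in M_n(\C)$ into eigenvectors of $\sigma^\omega$ of eigenvalue $\lambda_i/\lambda_j$, and choose the Shlyakhtenko realization so that $e_{ij} \ast 1$ and $1 \ast e_{ij}$ are carried to two explicit families of eigenvectors in the Araki-Woods factor that span a two-dimensional $U_t$-invariant real subspace of $H_\R \oplus H_\R$ for each $(i,j)$. Applying $V_t$ on each such two-dimensional subspace then yields an interpolation of the form $e_{ij} \ast 1 \mapsto \cos(\tfrac{\pi}{2}t)(e_{ij} \ast 1) + \sin(\tfrac{\pi}{2}t)(1 \ast e_{ij})$, and the multiplicative and modular relations defining $M_n(\C) \ast M_n(\C)$ are preserved because $V_t$ commutes with $U_t \oplus U_t$ and with the flip. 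This intrinsic verification that $\alpha_t$ is a well-defined state-preserving $\ast$-automorphism of $(M_n, \omega) \ast (M_n, \omega)$ is the delicate point; once it is settled, malleability follows immediately by the reduction in the first paragraph.
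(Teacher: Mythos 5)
Your first-paragraph reduction (free mixing is already known, and an automorphism group $\alpha$ of the base $(M_n(\C),\omega)\ast(M_n(\C),\omega)$ with $\alpha_1(a\ast 1)=1\ast a$ yields malleability of the Bernoulli shift by taking the infinite free product over $G$) is exactly the paper's reduction and is fine. The gap is in the construction of $\alpha_t$ itself. The matrix units $e_{ij}$ are not field operators $s(\xi)$: in the matricial model they are built from (generalized) circular elements via polar decompositions, and a projection such as $e_{00}$ (whose distribution is atomic) can never be of the form $s(\xi)$ (semicircular). So $e_{ij}\ast 1$ and $1\ast e_{ij}$ do not span a two-dimensional real subspace of $H_\R\oplus H_\R$ on which the rotation $V_t$ of Proposition \ref{mallea} could act, the copy of $M_n(\C)\ast M_n(\C)$ inside $T_\Gamma\ast T_\Gamma$ is not globally invariant under the ambient Bogoliubov rotation, and there is nothing to restrict. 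Independently of any model, the formula you propose, $e_{ij}\ast 1\mapsto\cos(\tfrac{\pi}{2}t)(e_{ij}\ast 1)+\sin(\tfrac{\pi}{2}t)(1\ast e_{ij})$, cannot extend to a $\ast$-homomorphism for $0<t<1$: writing $\theta=\tfrac{\pi}{2}t$, the element $\cos\theta\,(e_{00}\ast 1)+\sin\theta\,(1\ast e_{00})$ is not an idempotent, since its square contains the cross terms $\cos\theta\sin\theta\,\bigl[(e_{00}\ast 1)(1\ast e_{00})+(1\ast e_{00})(e_{00}\ast 1)\bigr]$ and the two projections are free, not orthogonal (and even orthogonality would give coefficients $\cos^2\theta,\sin^2\theta$ rather than $\cos\theta,\sin\theta$). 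Commutation of $V_t$ with $U_t\oplus U_t$ and with the flip controls only the modular data; it cannot turn a linear interpolation of matrix units into an automorphism. So the ``delicate point'' you defer is precisely where the argument breaks.

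The paper's proof takes a different, purely internal route that avoids interpolating matrix units linearly. By Dykema's theorem the centralizer $(M_n(\C)\ast M_n(\C))^{\omega\ast\omega}$ is the factor $L(\F_\infty)$. The two embeddings $\rho_1(x)=x\ast 1$ and $\rho_2(x)=1\ast x$ are state-preserving and modular, so the projections $\rho_1(e_{00}),\rho_2(e_{00})$ lie in this factorial centralizer and have equal trace; hence there is a partial isometry $v$ in the centralizer with $v^*v=\rho_1(e_{00})$, $vv^*=\rho_2(e_{00})$, and $u=\sum_i\rho_2(e_{i0})\,v\,\rho_1(e_{0i})$ is a unitary of the centralizer conjugating $\rho_1$ onto $\rho_2$ (Lemma \ref{plomod}). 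Writing $u=\exp(ih)$ with $h=h^*$ in the centralizer and setting $\alpha_t=\Ad(\exp(ith))$ gives a continuous, state-preserving one-parameter group with $\alpha_1(x\ast 1)=1\ast x$; your reduction then finishes the proof. If you want to repair your argument, you should replace the Bogoliubov-restriction step by a conjugation argument of this kind: the deformation here rotates by unitaries of the centralizer, not by an orthogonal rotation of the underlying Hilbert space.
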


\begin{proof}
Exactly in the same way, to check that the free Bernoulli action is malleable, it suffices to produce an action $(\alpha_t)$ of $\R$ on $(M_n(\C), \omega) \ast (M_n(\C), \omega)$ such that $\alpha_1(a \ast 1) = 1 \ast a$, $\forall a \in M_n(\C)$. Dykema proved in \cite{dykema96}, among other results, that the centralizer $(M_n(\C) \ast M_n(\C))^{\omega \ast \omega}$ is the type ${\rm II_1}$ factor $L(\F_\infty)$. We should mention that we obtained in \cite{houdayer2} classification results for some of these free products using free Araki-Woods factors of Shlyakhtenko. 

Denote by $(e_{ij})$ for $i, j \in \{0, \dots, n - 1\}$ a system of matrix unit in $M_n(\C)$. Prove the following lemma:
\begin{lem}\label{plomod}
Let $(\mathcal{P}, \psi)$ be a von Neumann algebra endowed with a faithful normal state, such that the centralizer $\mathcal{P}^\psi$ is a factor. For $i = 1, 2$, let $\rho_i : M_n(\C) \hookrightarrow (\mathcal{P}, \psi)$ be a \emph{modular} embedding, i.e. $\rho_i$ is state-preserving and  $\rho_i(M_n(\C))$ is globally invariant under the modular group $(\sigma_t^\psi)$. Then, there exists a unitary $u \in \mathcal{P}^\psi$ such that $u \rho_1(e_{ij}) u^* = \rho_2(e_{ij})$, $\forall i, j \in \{0, \dots, n - 1\}$.
\end{lem}
\begin{proof}[Proof of Lemma $\ref{plomod}$]
Let $i \in \{1, 2\}$. Denote by $p_i = \rho_i(e_{00})$. Since $\rho_i$ is modular, we have $p_{1, 2} \in \mathcal{P}^\psi$ and $\psi(p_1) = \psi(p_2) = \omega(e_{00})$. Since $\mathcal{P}^\psi$ is a factor, there exists a partial isometry $v \in \mathcal{P}^\psi$ such that $p_1 = v^*v$ and $p_2 = vv^*$. Denote by $u = \sum_{i = 0}^{n - 1} \rho_2(e_{i0}) v \rho_1(e_{0i})$. An easy computation shows that $u$ is a unitary and $u \in \mathcal{P}^\psi$, since $\rho_{1, 2}$ are modular. Moreover, for any $0 \leq k, l \leq n - 1$, $u \rho_1(e_{kl}) u^* = \rho_2(e_{kl})$. 
\end{proof}

Write $(\mathcal{P}, \psi) = (M_n(\C), \omega) \ast (M_n(\C), \omega)$. Define $\rho_{1, 2} : M_n(\C) \to \mathcal{P}$, by $\rho_1(x) = x \ast 1$, $\rho_2(x) = 1 \ast x$, $\forall x \in M_n(\C)$. The embeddings $\rho_{1, 2}$ are modular (see \cite{dykema96}). Applying Lemma $\ref{plomod}$, there exists $u \in \mathcal{U}(\mathcal{P}^\psi)$ such that $u \rho_1(e_{ij}) u^* = \rho_2(e_{ij})$, for any $0 \leq i, j \leq n - 1$. Write $u = \exp(ih)$ with $h$ a selfadjoint element in $\mathcal{P}^\omega$. Denote by $u_t = \exp(ith)$, $\forall t \in \R$. We can then define on $\mathcal{P}$, $\alpha_t = \Ad(u_t)$. Since $u_t \in \mathcal{P}^\omega$, the action $(\alpha_t)$ is $\omega$-preserving. By definition, $\alpha_1(x \ast 1) = 1 \ast x$, $\forall x \in M_n(\C)$. We are done.
\end{proof}

\section{Popa's Intertwining Techniques}\label{te}

\subsection{Intertwining Techniques for von Neumann Algebras Endowed with Almost Periodic States.}

We remind Popa's intertwining-by-bimodules technique: it is a very strong method to prove that two von Neumann subalgebras of a von Neumann algebra are unitarily conjugate. Roughly, Definition $\ref{embed}$ below says the following. Let $A, B \subset M$ be von Neumann subalgebras of a finite von Neumann algebra $(M, \tau)$. The following conditions are equivalent:
\begin{enumerate}
\item [$\bullet$] A corner of $A$ can be conjugated into a corner of $B$.
\item [$\bullet$] The $A$-$B$ bimodule $L^2(M, \tau)$ contains a nonzero $A$-$B$ subbimodule which is finitely generated as a right $B$-module.
\item [$\bullet$] The basic construction $\langle M, e_B \rangle$ contains a positive element $a$ commuting with $A$ and satisfying $0 < \widehat{\tau}(a) < \infty$, where $\widehat{\tau}$ denotes the canonical semifinite trace on the basic construction $\langle M, e_B \rangle$. 
\end{enumerate}

\begin{term}
\emph{Let $\mathcal{M}$ be a von Neumann algebra. For a possibly non-unital subalgebra $Q \subset \mathcal{M}$, we shall denote by $1_Q$ the unit of $Q$. Obviously, $1_Q$ is a projection in $\mathcal{M}$ and $Q \subset 1_Q \mathcal{M} 1_Q$. We shall always mention when a von Neumann subalgebra is possibly non-unital.
}
\end{term}

\begin{df}[Popa, \cite{{popamal1}, {popa2001}}]\label{embed}
\emph{Let $(\mathcal{M}, \varphi)$ be a von Neumann algebra endowed with an almost periodic state. Denote by $\| \cdot \|_2$ the $L^2$-norm w.r.t. the state $\varphi$. Assume that}
\begin{enumerate}
\item [$\bullet$] \emph{$A \subset \mathcal{M}^\varphi$ is a possibly non-unital von Neumann subalgebra, and denote by $1_A$ its unit;}
\item [$\bullet$] \emph{$B \subset \mathcal{M}^\varphi$ is a unital von Neumann subalgebra.}
\end{enumerate}
\emph{We say that $A$ \emph{embeds into} $B$ \emph{inside} $\mathcal{M}$ and write $\displaystyle{A \mathop{\prec}_{\mathcal{M}} B}$, if one of the following equivalent conditions is satisfied:}
\begin{enumerate}
\item  \emph{There exist $n \geq 1$, $\gamma > 0$, $v \in M_{1, n}(\C) \otimes 1_A \mathcal{M}$, a projection $p \in B^n$ and a (unital) $\ast$-homomorphism $\theta : A \to pB^np$ such that $v$ is a nonzero partial isometry which is a $\gamma$-eigenvector for $\varphi$, $v^*v \leq p$ and}
\begin{equation*}
xv = v\theta(x), \forall x \in A.
\end{equation*}
\item  \emph{There exists a nonzero element $w \in 1_A \mathcal{M}$ such that $Aw \subset \sum_{k = 1}^{n}w_k B$ for finitely many $w_k \in 1_A \mathcal{M}$.}
\item \emph{There exists a nonzero element $a \in 1_A \langle \mathcal{M}, e_B\rangle^+1_A \cap A'$ with $\widehat{\varphi}_B(a) < \infty$. Here $\langle\mathcal{M}, e_B\rangle$ denotes the basic construction for the inclusion $B \subset \mathcal{M}$, with its canonical almost periodic semifinite weight $\widehat{\varphi}_B$.}
\item \emph{There is no sequence of unitaries $(u_k)$ in $P$ such that $\|E_B(a^* u_k b)\|_2 \to 0$ for all $a, b \in 1_A \mathcal{M}$.}
\end{enumerate}
\end{df}

We refer to Theorem $2.1$ in \cite{popamal1} for the proof of these properties (see also Proposition C.$1$ in \cite{vaesbern}). Note that if $B = \C$, then $\displaystyle{A \mathop{\nprec}_{\mathcal{M}} \C}$ if and only if $A$ is diffuse. Indeed, if $B = \C$, we simply have $E_B = \varphi 1$. Since the von Neumann algebra $A$ is finite, $A$ is diffuse if and only if there exists a sequence of unitaries $(u_k)$ in $A$ that weakly tends to $0$. But,
\begin{eqnarray*}
u_k \to 0 \mbox{ weakly } & \Longleftrightarrow & \langle u_k\xi, \eta\rangle \to 0, \forall \xi, \eta \in 1_A L^2(\mathcal{M}, \varphi) \\
& \Longleftrightarrow & \langle u_k \widehat{b}, \widehat{a} \rangle \to 0, \forall a, b \in 1_A \mathcal{M} \mbox{ (since } (u_k) \mbox{ is bounded) } \\
& \Longleftrightarrow & \varphi(a^* u_k b) \to 0, \forall a, b \in 1_A \mathcal{M}.
\end{eqnarray*}

For our purpose, we need a generalization of this technique. Indeed, we want to allow the subalgebra $B$ to be globally invariant under the modular group $(\sigma^\varphi_t)$, and not just included in the centralizer $\mathcal{M}^\varphi$. We prove the following theorem:

\begin{theo}\label{newembed}
Let $(\mathcal{M}, \varphi)$ be a von Neumann algebra endowed with an almost periodic state. Denote by $\| \cdot \|_2$ the $L^2$-norm w.r.t. the state $\varphi$. Assume that 
\begin{enumerate}
\item [$\bullet$] $P \subset \mathcal{M}^\varphi$ is a possibly non-unital von Neumann subalgebra, and denote by $1_P$ its unit;
\item [$\bullet$] $\mathcal{B} \subset \mathcal{M}$ is a unital von Neumann subalgebra globally invariant under the modular group $(\sigma_t^\varphi)$.
\end{enumerate}
Denote by $B = \mathcal{B}^\varphi = \mathcal{B} \cap \mathcal{M}^\varphi$. The following two conditions are equivalent:
\begin{enumerate}

\item There exist $n \geq 1$, $\gamma > 0$, $v \in M_{1, n}(\C) \otimes 1_P \mathcal{M}$, a projection $p \in B^n$ and a (unital) $\ast$-homomorphism $\theta : P \to pB^np$ such that $v$ is a nonzero partial isometry which is a $\gamma$-eigenvector for $\varphi$, $v^*v \leq p$ and 
\begin{equation*}
xv = v\theta(x), \forall x \in P.
\end{equation*}
\item There is no sequence of unitaries $(u_k)$ in $P$ such that $\|E_{\mathcal{B}}(a^* u_k b)\|_2 \to 0$  for all $a, b \in 1_P \mathcal{M}$.
\end{enumerate}
If one of the conditions holds, we shall still write $\displaystyle{P \mathop{\prec}_{\mathcal{M}} \mathcal{B}}$.
\end{theo}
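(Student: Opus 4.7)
The plan is to mirror Popa's intertwining-by-bimodules argument (Theorem $2.1$ of \cite{popamal1}) in the basic construction of $\mathcal{B} \subset \mathcal{M}$, adapting it to the fact that $\mathcal{B}$ is only globally modular-invariant rather than sitting in $\mathcal{M}^\varphi$. The target $B = \mathcal{B}^\varphi$ in condition $(1)$ still lies in the centralizer, so the basic construction $\langle \mathcal{M}, e_B \rangle$ of the smaller inclusion $B \subset \mathcal{M}$ carries its canonical semifinite trace $\Tr$; the $\gamma$-eigenvector requirement on $v$ records which spectral slice $p_\gamma$ of $L^2(\mathcal{M})$ receives the intertwining.

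For $(1) \Rightarrow (2)$ I would run a short direct computation. Under $(1)$ the matrix $v^* u_k v = (v^*v)\theta(u_k)$ already lies in $M_n(\C) \otimes B \subset M_n(\C) \otimes \mathcal{B}$, so $E_{M_n(\C) \otimes \mathcal{B}}$ acts on it as the identity. If $(u_k) \subset \mathcal{U}(P)$ satisfied $\|E_\mathcal{B}(a^* u_k b)\|_2 \to 0$ for all $a, b \in 1_P \mathcal{M}$, taking $a, b$ among the components of $v$ would force $\|v^* u_k v\|_{\tr_n \otimes \varphi} \to 0$. But $\theta(u_k)$ is a unitary in $pB^np \subset M_n(\C) \otimes \mathcal{M}^\varphi$, on which $\tr_n \otimes \varphi$ is tracial, so right multiplication by $\theta(u_k)$ preserves $\|\cdot\|_{\tr_n \otimes \varphi}$, giving $\|v^*v\|_{\tr_n \otimes \varphi} = 0$ and contradicting $v \neq 0$.

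For $(2) \Rightarrow (1)$ I would work inside $\langle \mathcal{M}, e_\mathcal{B} \rangle$ with its canonical almost periodic semifinite weight $\widehat{\varphi}_\mathcal{B}$, using the structural fact that $\mathcal{U}(P) \subset \mathcal{M}^\varphi$ sits inside the centralizer of $\widehat{\varphi}_\mathcal{B}$. Set $q := 1_P e_\mathcal{B} 1_P$, a positive element with $\widehat{\varphi}_\mathcal{B}(q) = \varphi(1_P) < \infty$, and form $K := \overline{\mathrm{co}}^w\{u q u^* : u \in \mathcal{U}(P)\}$ in $1_P \langle \mathcal{M}, e_\mathcal{B}\rangle 1_P$. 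The $\widehat{\varphi}_\mathcal{B}$-preservation under conjugation by $\mathcal{U}(P)$ gives $K$ a unique $L^2(\widehat{\varphi}_\mathcal{B})$-minimizer $c$, which is positive, $P$-central, has $\widehat{\varphi}_\mathcal{B}(c) < \infty$, and lies in the centralizer of $\widehat{\varphi}_\mathcal{B}$. The key identity
\begin{equation*}
\widehat{\varphi}_\mathcal{B}(u q u^* w q w^*) = \|E_\mathcal{B}(u^* w)\|_2^2, \quad u, w \in \mathcal{U}(P),
\end{equation*}
derived from $e_\mathcal{B} x e_\mathcal{B} = E_\mathcal{B}(x) e_\mathcal{B}$ together with the traciality of $\varphi$ on $\mathcal{M}^\varphi$ and the fact that $E_\mathcal{B}$ sends $\mathcal{M}^\varphi$ into $B$, expresses the squared norm of a convex combination $\sum_i t_i u_i q u_i^*$ as $\sum_{i,j} t_i t_j \|E_\mathcal{B}(u_i^* u_j)\|_2^2$; its polarization to arbitrary $a, b \in 1_P \mathcal{M}$ (by enlarging $K$ with decorations of the form $a e_\mathcal{B} b^*$) combined with a standard diagonalization across the separable predual of $\mathcal{M}$ shows that $c = 0$ would supply a sequence $(u_k) \subset \mathcal{U}(P)$ with $\|E_\mathcal{B}(a^* u_k b)\|_2 \to 0$ for all $a, b \in 1_P \mathcal{M}$, contradicting $(2)$. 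Hence $c \neq 0$.

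Given $c \neq 0$, since $c$ lies in the centralizer of $\widehat{\varphi}_\mathcal{B}$ it commutes with $\Delta_\varphi^{it}$ on $L^2(\mathcal{M})$, hence with each spectral projection $p_\gamma$. Choosing $\gamma \in \Sp(\mathcal{M}, \varphi)$ with $c p_\gamma \neq 0$, a spectral projection of $c p_\gamma$ yields a nonzero projection $q_0 \in p_\gamma \langle \mathcal{M}, e_B\rangle p_\gamma \cap P'$ of finite trace $\Tr = \gamma^{-1}\widehat{\varphi}_B$. The right $B$-module $q_0 L^2(\mathcal{M})$ is then of finite trace over $B$, and Lemma \ref{finiment} lets me cut by a central projection in $\mathcal{Z}(B)$ to pass to a finitely generated submodule identifiable with $pL^2(B)^{\oplus n}$ for some projection $p \in B^n$; the induced action of $P$ supplies the $\ast$-homomorphism $\theta : P \to pB^np$ and, via the identification, the partial isometry $v$, whose confinement in $p_\gamma L^2(\mathcal{M})$ forces it to be a $\gamma$-eigenvector for $\varphi$. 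The main obstacles I foresee are the polarization step (enlarging the convex set so that arbitrary $a, b \in 1_P \mathcal{M}$ enter the $E_\mathcal{B}$-estimate, which a priori controls only $\|E_\mathcal{B}(u^* w)\|_2$) and the transition between the two weights $\widehat{\varphi}_\mathcal{B}$ on $\langle \mathcal{M}, e_\mathcal{B}\rangle$ and $\widehat{\varphi}_B$ on $\langle \mathcal{M}, e_B\rangle$ needed to pass from the $\widehat{\varphi}_\mathcal{B}$-finiteness of $c$ to the $\Tr$-finiteness of $q_0$ in the appropriate spectral slice.
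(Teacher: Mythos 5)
Your overall route for $(2)\Rightarrow(1)$ (Popa's averaging argument in the basic construction of $\mathcal{B}$, a minimal-norm element in a weakly closed convex hull, then a spectral slice and a finite right $B$-module) is the same as the paper's, but it has a genuine gap at exactly the point you yourself flag: the passage from $\widehat{\varphi}_{\mathcal{B}}$-finiteness to $\widehat{\varphi}_B$-finiteness. You assert that a spectral projection of $cp_\gamma$ is a projection in $p_\gamma\langle \mathcal{M},e_B\rangle p_\gamma\cap P'$ of finite trace $\Tr=\gamma^{-1}\widehat{\varphi}_B$, but all you know is $\widehat{\varphi}_{\mathcal{B}}(c)<\infty$; the weights $\widehat{\varphi}_{\mathcal{B}}$ and $\widehat{\varphi}_B$ live on two different basic constructions and do not agree on $\langle\mathcal{M},e_{\mathcal{B}}\rangle\subset\langle\mathcal{M},e_B\rangle$ (for instance $\widehat{\varphi}_B(e_{\mathcal{B}})$ has no a priori bound, since $e_{\mathcal{B}}$ dominates all the $e_{\mathcal{B}}p_\mu$). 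This is precisely the new technical content of the theorem compared with the tracial case, and the paper's Step (1) supplies it: writing $e_{\mathcal{B}}p_\mu=v(1\otimes e_B)v^*$ for a maximal row $v=[v_1\cdots v_n]$ of partial isometries in $\mathcal{B}^{\mu}$ with orthogonal range projections, one gets the inequality $\widehat{\varphi}_B(p_\gamma a e_{\mathcal{B}}a^*p_\gamma)\leq\varphi(aa^*)$ for eigenvectors $a$, which is then pushed through the convex hull by the $\sigma$-weak lower semicontinuity of $\widehat{\varphi}_B$ to give $\widehat{\varphi}_B(p_\gamma dp_\gamma)<\infty$. Without an argument of this type your finite-trace claim, and hence the application of Lemma \ref{finiment}, does not go through. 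A second, related defect: your convex set is generated by $1_Pe_{\mathcal{B}}1_P$ alone, whereas condition $(2)$ only provides $\varepsilon>0$ and a \emph{finite set} $K\subset 1_P\mathcal{M}$ with $\max_{a,b\in K}\|E_{\mathcal{B}}(a^*ub)\|_2\geq\varepsilon$ for all $u\in\mathcal{U}(P)$; it does not control $\|E_{\mathcal{B}}(u)\|_2$, so the minimizer of the hull of $\{ue_{\mathcal{B}}u^*\}$ may well vanish under $(2)$. One must start from $c=\sum_{a\in K}ae_{\mathcal{B}}a^*$ (with $K$ chosen among eigenvectors so that $c$ lies in the centralizer of $\widehat{\varphi}_{\mathcal{B}}$), and prove nonvanishing of the minimizer via the normal state $\widehat{\varphi}_{\mathcal{B}}(e_{\mathcal{B}}\cdot e_{\mathcal{B}})$; your ``enlarging the convex set with decorations $ae_{\mathcal{B}}b^*$'' gestures at this but is left unexecuted, and it is needed for the lower bound itself, not merely as a polarization afterthought.

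In $(1)\Rightarrow(2)$ there is a smaller slip: $v^*u_kv=\theta(u_k)v^*v$ need not lie in $M_n(\C)\otimes\mathcal{B}$, because $v^*v$ is only known to lie in $M_n(\C)\otimes\mathcal{M}^\varphi$ (it is the square of a $\gamma$-eigenvector), not in $B^n$. The correct computation applies $\id\otimes E_{\mathcal{B}}$ first: by $\mathcal{B}$-bimodularity, $(\id\otimes E_{\mathcal{B}})(v^*u_kv)=\theta(u_k)(\id\otimes E_{\mathcal{B}})(v^*v)$, and left multiplication by the unitary $\theta(u_k)$ of $pB^np$ preserves $\|\cdot\|_{\tr_n\otimes\varphi}$, so the hypothesis forces $(\id\otimes E_{\mathcal{B}})(v^*v)=0$ and hence $v=0$ by faithfulness of $E_{\mathcal{B}}$. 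With that correction this direction is fine and coincides with the paper's argument.
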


\begin{proof} 
We must pay attention to the following fact: there are two different basic constructions here. The one with $\mathcal{B}$ and the other one with $B = \mathcal{B}^\varphi$. Of course, we have the inclusion $\langle \mathcal{M}, e_{\mathcal{B}} \rangle \subset \langle \mathcal{M}, e_B \rangle$, but the associated weights are not equal on $ \langle \mathcal{M}, e_{\mathcal{B}} \rangle$. For this reason, we shall denote by $\widehat{\varphi}_{\mathcal{B}}$ the weight for the basic construction $\langle \mathcal{M}, e_{\mathcal{B}} \rangle$ and by $\widehat{\varphi}_B$ the weight for $ \langle \mathcal{M}, e_B \rangle$.

$(1) \Longrightarrow (2)$. Suppose that we have all the data of $(1)$. Let $(u_k)$ be a sequence of unitaries in $P$ such that $\|E_{\mathcal{B}}(a^* u_k b)\|_2 \to 0$ for all $a, b \in 1_P \mathcal{M}$. Then  $\|(\id \otimes E_{\mathcal{B}})(v^* u_k v)\|_{\tr_n \otimes \varphi} \to 0$. But for every $k \in \N$, $v^* u_k v = \theta(u_k)v^*v$. Moreover, $\theta(u_k) \in \mathcal{U}(p B^n p)$ and $v^*v \leq p$. Thus,
\begin{eqnarray*}
\|(\id \otimes E_{\mathcal{B}})(v^* v)\|_{\tr_n \otimes \varphi} & = & \|\theta(u_k)(\id \otimes E_{\mathcal{B}})(v^* v)\|_{\tr_n \otimes \varphi}\\
& = & \|(\id \otimes E_{\mathcal{B}})(\theta(u_k) v^* v)\|_{\tr_n \otimes \varphi}\\
& = & \|(\id \otimes E_{\mathcal{B}})(v^* u_k v)\|_{\tr_n \otimes \varphi} \to 0.
\end{eqnarray*}
We conclude that $(\id \otimes E_{\mathcal{B}})(v^* v) = 0$ and so $v = 0$. Contradiction.

$(2) \Longrightarrow (1)$. We prove this implication in three steps. For any $\gamma \in \Sp(\mathcal{M}, \varphi)$, denote by $\mathcal{M}^\gamma$ the vector space of all $\gamma$-eigenvectors for $\varphi$ in $\mathcal{M}$. Since $\varphi$ is almost periodic,
\begin{equation*}
L^2(\mathcal{M}, \varphi) = \bigoplus_{\gamma \in \Sp(\mathcal{M}, \varphi)} L^2(\mathcal{M}^\gamma).
\end{equation*}
Denote by $p_\gamma$ the orthogonal projection from $L^2(\mathcal{M})$ onto $L^2(\mathcal{M}^\gamma)$.

{\bf Step (1): Proving that for any $\gamma, \lambda \in \Sp(\mathcal{M}, \varphi)$, and for any $a \in \mathcal{M}^\lambda$, we have}
\begin{equation*}
\widehat{\varphi}_B(p_\gamma a e_{\mathcal{B}} a^* p_\gamma) \leq \varphi(aa^*).
\end{equation*}

First of all, note that for any $\gamma \in \Sp(\mathcal{M}, \varphi)$, $\mathcal{B}^\gamma = \mathcal{M}^\gamma \cap \mathcal{B}$. Since $\mathcal{B}$ is globally invariant under $\sigma^\varphi$, for every $t \in \R$, we have $\sigma_t^\varphi \circ E_{\mathcal{B}} = E_{\mathcal{B}} \circ \sigma_t^\varphi$. It follows immediatly that $E_{\mathcal{B}}(\mathcal{M}^\gamma) = \mathcal{B}^\gamma$. It is straightforward to check that $e_{\mathcal{B}} p_\gamma = p_\gamma e_{\mathcal{B}}$; we shall denote this projection by $e_\mathcal{B}^\gamma$. In fact, it is nothing but the orthogonal projection of $L^2(\mathcal{M})$ onto $L^2(\mathcal{B}^\gamma)$. Take now $\gamma \in \Sp(\mathcal{B}, \varphi)$. Since $\mathcal{B}^\gamma \neq 0$, take $(v_i)_{i \in I}$ a maximal family of nonzero partial isometries in $\mathcal{B}^\gamma$ such that the final projections $p_i = v_i v^*_i$ are pairwise orthogonal. We assume that $I = \{1, \dots, n\}$ with $1 \leq n \leq \infty$. Denote by $v = [v_1 \cdots v_n] \in M_{1, n}(\C) \otimes \mathcal{B}^\gamma$ and by $p = \sum_i v_iv_i^*$. It is easy to see that 
\begin{equation*}
e_{\mathcal{B}}^\gamma \eta = v (1 \otimes e_B) v^* \eta = 0, \forall \eta \in (L^2(\mathcal{M}) \ominus L^2(\mathcal{B})) \oplus \bigoplus_{\lambda \neq \gamma} L^2(\mathcal{B^\lambda}).
\end{equation*}
Assume now that there exists $x \in \mathcal{B}^\gamma$ such that $\widehat{x} \neq v(1 \otimes e_B)v^*\widehat{x}$. Thus, $(1 - p) x \neq 0$. Write $(1 - p) x = w b$ its polar decomposition. Since $(1 - p) x \in \mathcal{B}^\gamma$, it follows that $w \in \mathcal{B}^\gamma$. But $w \neq 0$, and $ww^* \leq 1 - p$. Since the family $(v_i)_{i \in I}$ is assumed to be maximal, we have a contradiction. Consequently, we have just proven that
\begin{equation*}
e_{\mathcal{B}}^\gamma = v (1 \otimes e_B) v^*.
\end{equation*}

Since $B \subset \mathcal{M}^\varphi$, it follows that $p_\gamma \in \langle \mathcal{M}, e_B \rangle$, for all $\gamma \in \Sp(\mathcal{M}, \varphi)$. Take $\gamma, \lambda \in \Sp(\mathcal{M}, \varphi)$. Let $a \in \mathcal{M}^\lambda$. We want to prove now that $\widehat{\varphi}_B(p_\gamma a e_{\mathcal{B}} a^* p_\gamma) \leq \varphi(aa^*)$. It is easy to see that $a^* p_\gamma = p_{\gamma \lambda^{-1}} a^*$. Consequently, we have
\begin{equation*}
p_\gamma a e_{\mathcal{B}} a^* p_\gamma = a e_{\mathcal{B}}^{\gamma \lambda^{-1}} a^*.
\end{equation*}
If $\gamma \lambda^{-1} \notin \Sp(\mathcal{B}, \varphi)$, then $e_{\mathcal{B}}^{\gamma \lambda^{-1}} = 0$ and so $\widehat{\varphi}_B(p_\gamma a e_{\mathcal{B}} a^* p_\gamma) = 0$. If $\gamma \lambda^{-1} \in \Sp(\mathcal{B}, \varphi)$, take as before $v = [ v_1 \cdots v_n ] \in M_{1, n}(\C) \otimes \mathcal{B}^{\gamma \lambda^{-1}}$ such that $e_{\mathcal{B}}^{\gamma \lambda^{-1}} = v (1 \otimes e_B) v^*$ and $(v_i)$ is a family of nonzero partial isometries such that the projections $v_iv_i^*$ are pairwise orthogonal. Thus,

\begin{eqnarray*}
\widehat{\varphi}_B(p_\gamma a e_{\mathcal{B}} a^* p_\gamma) & = & \widehat{\varphi}_B(a e_{\mathcal{B}}^{\gamma \lambda^{-1}} a^*)\\
& = & \widehat{\varphi}_B(a v (1 \otimes e_B) v^* a^*) \\
& = & \widehat{\varphi}_B (a \left ( \sum_i v_i e_B v_i^* \right) a^*) \\
& = & \sum_i \widehat{\varphi}_B (a v_i e_B v_i^* a^*) \,(\widehat{\varphi}_B \mbox{ is normal})\\
& = & \sum_i \varphi(a v_i v_i^* a^*) \\
& = & \varphi (a \left ( \sum_i v_i v_i^* \right) a^*) \,(\widehat{\varphi}_B \mbox{ is normal}) \\
& \leq & \varphi(aa^*).
\end{eqnarray*}

{\bf Step (2): Finding a nonzero element $d \in 1_P \langle \mathcal{M}, e_{\mathcal{B}} \rangle^+1_P  \cap P'$  such that for any $\gamma \in \Sp(\mathcal{M}, \varphi)$, we have}
\begin{equation*}
\widehat{\varphi}_B(p_\gamma d p_\gamma) <~\infty.
\end{equation*}

By $(2)$, we can take $\varepsilon > 0$ and $K \subset 1_P \mathcal{M}$ finite subset such that for all unitaries $u \in P$, $\max_{a, b \in K} \|E_{\mathcal{B}}(a^* u b)\|_2 \geq \varepsilon$. Note that
\begin{eqnarray*}
\|E_{\mathcal{B}}(a^* u b)\|_2^2 & = & \varphi(E_{\mathcal{B}} (a^* u b)^* E_{\mathcal{B}}(a^* u b))\\
& = & \widehat{\varphi}_{\mathcal{B}}(E_{\mathcal{B}} (a^* u b)^* e_\mathcal{B}  E_{\mathcal{B}}(a^* u b))\\
& = & \widehat{\varphi}_{\mathcal{B}}(e_{\mathcal{B}} (a^* u b)^* e_\mathcal{B} (a^* u b) e_{\mathcal{B}}).
\end{eqnarray*}
Since the functional $\widehat{\varphi}_{\mathcal{B}}(e_{\mathcal{B}} \cdot e_{\mathcal{B}})$ is a normal   state on the basic construction $\langle \mathcal{M}, e_{\mathcal{B}}\rangle$ and since $\varphi$ is almost periodic, we can assume that all the elements of $K$ are eigenvectors for $\varphi$. Define now the element $c = \sum_{a \in K} a e_{\mathcal{B}}a^*$ in $1_P\langle \mathcal{M}, e_{\mathcal{B}}\rangle^+1_P$. Note that $\widehat{\varphi}_{\mathcal{B}}(c) = \sum_{a \in K} \varphi(aa^*)$, and so $\widehat{\varphi}_{\mathcal{B}}(c) < \infty$. Moreover, since the elements of $K$ are eigenvectors for $\varphi$, we get $c \in \langle \mathcal{M}, e_{\mathcal{B}}\rangle^{\widehat{\varphi}_{\mathcal{B}}}$. 
Denote by $\mathcal{C}$ the convex hull of $\{u^*cu : u \in \mathcal{U}(P)\}$. Denote now by $\overline{\mathcal{C}}$ the closure of $\mathcal{C}$ for the weak topology. It should be noted that since $\mathcal{C}$ is bounded, $\overline{\mathcal{C}}$ is also closed for the $\sigma$-weak topology. Let $d \in 1_P\langle \mathcal{M}, e_{\mathcal{B}}\rangle^+1_P$ be the element of minimal $L^2$-norm $\| \cdot \|_{2, \widehat{\varphi}_{\mathcal{B}}}$ (w.r.t. the weight $\widehat{\varphi}_{\mathcal{B}}$) in $\overline{\mathcal{C}}$. By uniqueness of the element of minimal $L^2$-norm, it follows that $u^* d u = d$, $\forall u \in \mathcal{U}(P)$, and so $d \in 1_P\langle \mathcal{M}, e_{\mathcal{B}}\rangle^+1_P \cap P'$. Obviously, we have $\sigma_t^{\widehat{\varphi}_{\mathcal{B}}}(d) = d$, $\forall t \in \R$. We show now that $d \neq 0$. For all $u \in \mathcal{U}(P)$, we have 
\begin{eqnarray*}
\sum_{b \in K} \widehat{\varphi}_{\mathcal{B}}(e_{\mathcal{B}} b^* (u^*cu) b e_{\mathcal{B}}) & = & \sum_{a, b \in K} \widehat{\varphi}_{\mathcal{B}}(e_{\mathcal{B}} (a^* u b)^* e_\mathcal{B} (a^* u b) e_{\mathcal{B}})\\
& = & \sum_{a, b \in K} \widehat{\varphi}_{\mathcal{B}}(E_{\mathcal{B}} (a^* u b)^* e_\mathcal{B}  E_{\mathcal{B}}(a^* u b))\\
& = & \sum_{a, b \in K} \varphi(E_{\mathcal{B}} (a^* u b)^* E_{\mathcal{B}}(a^* u b))\\
& = & \sum_{a, b \in K} \|E_{\mathcal{B}}(a^* u b)\|_2^2 \geq \varepsilon^2.
\end{eqnarray*}
Consequently, we have
\begin{equation*}
\sum_{b \in K} \widehat{\varphi}_{\mathcal{B}}(e_{\mathcal{B}} b^* y b e_{\mathcal{B}}) \geq \varepsilon^2, \forall y \in \mathcal{C}.
\end{equation*}
Since the functional $\widehat{\varphi}_{\mathcal{B}}(e_{\mathcal{B}} \cdot e_{\mathcal{B}})$ is a normal   state on the basic construction $\langle \mathcal{M}, e_{\mathcal{B}}\rangle$, we get
\begin{equation*}
\sum_{b \in K} \widehat{\varphi}_{\mathcal{B}}(e_{\mathcal{B}} b^* d b e_{\mathcal{B}}) \geq \varepsilon^2.
\end{equation*} 
It follows that $d \neq 0$. At last, using the result of Step $(1)$ and since $P \subset \mathcal{M}^\varphi$, $\forall \gamma, \lambda \in \Sp(\mathcal{M}, \varphi)$, $\forall u \in \mathcal{U}(P)$, $\forall a \in 1_P \mathcal{M}^\lambda$, we have
\begin{equation*}
\widehat{\varphi}_B(p_\gamma u^* a e_{\mathcal{B}} a^* u p_\gamma) \leq \varphi(u^* a a^* u) = \varphi(a a^*).
\end{equation*}
Consequently, summing over $a \in K$ and using the convexity of $\mathcal{C}$, we obtain
\begin{equation*}
\widehat{\varphi}_B(p_\gamma y p_\gamma) \leq \sum_{a \in K} \varphi(a a^*), \forall \gamma \in \Sp(\mathcal{M}, \varphi), \forall y \in \mathcal{C}.
\end{equation*}
Using the $\sigma$-weak lower semi continuity of the weight $\widehat{\varphi}_B$ (see for example Theorem ${\rm VII}.1.11$ in \cite{takesakiII}), for every $\gamma \in \Sp(\mathcal{M}, \varphi)$, we have 
\begin{equation*}
\widehat{\varphi}_B(p_\gamma d p_\gamma) \leq \sum_{a \in K} \varphi(a a^*) < \infty.
\end{equation*}

{\bf Step (3): Constructing a nonzero $P$-$B$-subbimodule $\mathcal{H} \subset 1_P L^2(\mathcal{M}^\gamma)$ finitely generated over $B$ to conclude.} 
We remind that for any $x \in \langle \mathcal{M}, e_B \rangle^+$,
\begin{equation*}
\widehat{\varphi}_B(x) = \sum_{\gamma \in \Sp(\mathcal{M}, \varphi)} \widehat{\varphi}_B(p_\gamma x p_\gamma).
\end{equation*}
Since $d \neq 0$ and thanks to Step $(2)$, there exists $\gamma \in \Sp(\mathcal{M}, \varphi)$ such that $$0 < \widehat{\varphi}_B(p_\gamma d p_\gamma) < \infty.$$
Since $p_\gamma \in (\mathcal{M}^\varphi)'$, we have $p_\gamma \in P'$. Thus $p_\gamma d p_\gamma \in 1_P \langle \mathcal{M}, e_B \rangle^+1_P \cap P'$. Take now $q$ a nonzero spectral projection of the element $p_\gamma d p_\gamma$. We get that $\mathcal{K} = qL^2(\mathcal{M})$ is a nonzero $P$-$B$-subbimodule of $1_P L^2(\mathcal{M}^\gamma)$ with finite trace over $B$ (see the discussion in Section $\ref{ara}$). Thus, cutting down by a central projection of $B$ (see Lemma $\ref{finiment}$), we get a nonzero $P$-$B$-subbimodule $\mathcal{H} \subset 1_P L^2(\mathcal{M}^\gamma)$ which is finitely generated over $B$. Now, the rest of the proof is exactly the same as the proof of Theorem $2.1$ in \cite{popamal1}. For the sake of completeness, we proceed in order to obtain condition $(1)$.  Hence, we can take $n \geq 1$, a projection $p \in B^n$ and a right $B$-module isomorphism
\begin{equation*}
\psi : p L^2(B)^{\oplus n} \to \mathcal{H}. 
\end{equation*}
Since $\mathcal{H}$ is a $P$-module, we get a (unital) $\ast$-homomorphism $\theta : P \to p B^n p$ satisfying $x\psi(\eta) = \psi(\theta(x)\eta)$ for all $x \in P$, and $\eta \in p L^2(B)^{\oplus n}$. Define now $e_j \in L^2(B)^{\oplus n}$ as $e_j = (0, \dots, \widehat{1}, \dots, 0)$ and $\xi = (\xi_1, \dots, \xi_n) \in M_{1, n}(\C) \otimes \mathcal{H}$, with $\xi_j = \psi(pe_j)$. Let $j \in \{1, \dots, n\}$. For any $x \in P$, write $\theta(x) = (\theta_{kl}(x))_{kl} \in p B^n p$. We have
\begin{eqnarray*}
x \xi_j & = & x \psi(p e_j)\\
& = & \psi(\theta(x) p e_j)\\
& = & \psi(p\theta(x) e_j)\\
& = & \psi(p \sum_{i = 1}^n \theta_{ij}(x) e_i)\\
& = & \sum_{i = 1}^n \psi(p (0, \dots, \theta_{ij}(x), \dots, 0))\\
& = & \sum_{i = 1}^n \psi((p e_i) \theta_{ij}(x))\\
& = & \sum_{i = 1}^n \psi(p e_i)\theta_{ij}(x) \: (\psi \mbox{ is a right }B\mbox{-module isomorphism})\\
& = & \sum_{i = 1}^n \xi_i \theta_{ij}(x).
\end{eqnarray*}
Consequently, for every $x \in P$, $x \xi = \xi \theta(x)$. In the von Neumann algebra $\mathcal{M}^{n + 1} \subset B(L^2(\mathcal{M}) \oplus L^2(\mathcal{M})^{\oplus n})$, define
\begin{equation*}
X_x = 
\begin{pmatrix}
x & 0 \\
0 & \theta(x)
\end{pmatrix}, \forall x \in P. 
\end{equation*}
In the space $L^2(\mathcal{M}^{n + 1})$, define
\begin{equation*}
\Xi = \begin{pmatrix}
0 & \xi \\
0 & 0
\end{pmatrix}.
\end{equation*}
We still denote by $1_P$ the unit of $P^{n + 1} \subset \mathcal{M}^{n + 1}$. Note that $X_x \in 1_P \mathcal{M}^{n + 1} 1_P$, $\forall x \in P$, and $\Xi \in 1_P L^2(\mathcal{M}^{n + 1})$. We obtain $X_x \Xi = \Xi X_x$, for every $x \in P$. Since $\Xi$ is a $\gamma$-eigenvector in $\mathcal{M}^{n + 1}$ for the state $\tr_{n + 1} \otimes \varphi$, we can define (as in the Appendix Proposition \ref{polar}) $T_\Xi$ and write $T_\Xi = V |T_\Xi|$ the polar decomposition of $T_\Xi$. We get $X_x V = V X_x$, for every $x \in P$, and $VV^* \leq 1_P$. Write
\begin{equation*}
V = 
\begin{pmatrix}
u & v \\
v' & w
\end{pmatrix}.
\end{equation*}
It is straightforward to check that $v \in M_{1, n}(\C) \otimes 1_P \mathcal{M}$ is a partial isometry from $\ker w$ onto $\ker u^*$ such that $x v = v \theta(x)$, for every $x \in P$. Moreover, $v$ is a $\gamma$-eigenvector for $\varphi$ and $v^*v \leq p$.
\end{proof}

As a consequence of Definition $\ref{embed}$ and Theorem $\ref{newembed}$, we prove the following proposition which will be needed in the next section.

\begin{prop}\label{embed2}
Let $(\mathcal{M}, \varphi)$ be a von Neumann algebra endowed with a faithful normal almost periodic state. Let $\mathcal{M}_i \subset \mathcal{M}$, $(i = 1, 2)$, be two von Neumann subalgebras globally invariant under the modular group $(\sigma^\varphi_t)$. Let $Q \subset \mathcal{M}^\varphi$ be a possibly non-unital von Neumann subalgebra, and denote by $1_Q$ its unit. Assume that $\displaystyle{Q \mathop{\nprec}_{\mathcal{M}} \mathcal{M}_1}$ and $\displaystyle{Q \mathop{\nprec}_{\mathcal{M}} \mathcal{M}_2}$. Then, there exists a sequence of unitaries $(u_k)$ in $Q$ such that $\|E_{\mathcal{M}_i}(x^* u_k y)\|_2 \to 0$ for all $x, y \in 1_Q \mathcal{M}$ and all $i \in \{1, 2\}$.
\end{prop}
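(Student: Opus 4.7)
The plan is to reduce both non-embedding hypotheses to a single one in an amplification, and then apply Theorem~\ref{newembed}. Set $\tilde{\mathcal{M}} := M_2(\C) \otimes \mathcal{M}$ with almost periodic state $\tilde{\varphi} := \tr_2 \otimes \varphi$; its modular group is $\id \otimes \sigma^{\varphi}_t$, so the block-diagonal subalgebra $\tilde{\mathcal{B}} := \mathcal{M}_1 \oplus \mathcal{M}_2 \subset \tilde{\mathcal{M}}$ is modular invariant with $\tilde{\mathcal{B}}^{\tilde{\varphi}} = B_1 \oplus B_2$, where $B_i := \mathcal{M}_i^{\varphi}$; and the diagonal copy $\tilde{Q} := \{d(q) := \operatorname{diag}(q,q) : q \in Q\}$ sits in $\tilde{\mathcal{M}}^{\tilde{\varphi}}$. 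A direct computation of the state-preserving conditional expectation yields $E_{\tilde{\mathcal{B}}}(d(z)) = \operatorname{diag}(E_{\mathcal{M}_1}(z), E_{\mathcal{M}_2}(z))$, whence
\begin{equation*}
\|E_{\tilde{\mathcal{B}}}(d(x)^* d(u) d(y))\|_{2, \tilde{\varphi}}^2 = \tfrac{1}{2}\|E_{\mathcal{M}_1}(x^* u y)\|_2^2 + \tfrac{1}{2}\|E_{\mathcal{M}_2}(x^* u y)\|_2^2.
\end{equation*}
Since any unitary in $\tilde{Q}$ is automatically of the form $d(u)$ with $u \in \mathcal{U}(Q)$, the proposition will follow from $\tilde{Q} \mathop{\nprec}_{\tilde{\mathcal{M}}} \tilde{\mathcal{B}}$ by applying condition $(2)$ of Theorem~\ref{newembed} and specializing $\tilde{x}, \tilde{y}$ to diagonal elements.

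I argue $\tilde{Q} \mathop{\nprec}_{\tilde{\mathcal{M}}} \tilde{\mathcal{B}}$ by contradiction. Otherwise, Theorem~\ref{newembed}(1) supplies $n \geq 1$, $\gamma > 0$, a projection $p = p^1 \oplus p^2 \in (B_1 \oplus B_2)^n$, a unital $\ast$-homomorphism $\theta : \tilde{Q} \to p(B_1 \oplus B_2)^n p$, and a nonzero partial isometry $v \in M_{1,n}(\C) \otimes 1_{\tilde{Q}} \tilde{\mathcal{M}}$, $\gamma$-eigenvector for $\tilde{\varphi}$, with $v^*v \leq p$ and $d(q) v = v \theta(d(q))$ for all $q \in Q$. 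Since $(B_1 \oplus B_2)^n$ is block diagonal in the $M_2(\C)$ direction, $\theta$ automatically takes the form $\theta(d(q)) = \operatorname{diag}(\theta^1(q), \theta^2(q))$ with each $\theta^k : Q \to p^k(M_n(\C) \otimes B_k) p^k$ a unital $\ast$-homomorphism; and writing $v = \sum_{a,b=1}^{2} e_{ab} \otimes v_{ab}$ with $v_{ab} \in M_{1,n}(\C) \otimes 1_Q \mathcal{M}$, the intertwining reads block-wise
\begin{equation*}
q v_{ab} = v_{ab} \theta^b(q), \qquad a, b \in \{1,2\}, \; q \in Q.
\end{equation*}
Each $v_{ab}$ is itself a $\gamma$-eigenvector for $\varphi$ because $\sigma^{\tilde{\varphi}}_t = \id \otimes \sigma^{\varphi}_t$, and $v \neq 0$ forces some $v_{ab}$ to be nonzero.

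Fix such a nonzero $v_{ab}$ and take its polar decomposition $v_{ab} = w |v_{ab}|$. Since $v_{ab}^* v_{ab}$ is a $1$-eigenvector of $\varphi$, one has $|v_{ab}| \in M_n(\C) \otimes \mathcal{M}^{\varphi}$; and the intertwining forces $\theta^b(q)$ to commute with $v_{ab}^* v_{ab}$, hence with $|v_{ab}|$ and with its support projection $w^*w$. A direct verification then gives that $w$ is a nonzero partial isometry in $M_{1,n}(\C) \otimes 1_Q \mathcal{M}$, itself a $\gamma$-eigenvector for $\varphi$, satisfying $w^*w \leq p^b$ (deduced from $v_{ab} = v_{ab}\theta^b(1) = v_{ab} p^b$) and $qw = w\theta^b(q)$ for all $q \in Q$ --- the latter identity holds on $\range(|v_{ab}|)$ from the pre-polar relation, and trivially on its complement because $\theta^b(q)$ preserves the support of $|v_{ab}|$. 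This is exactly condition $(1)$ of Theorem~\ref{newembed} for the inclusion $B_b \subset \mathcal{M}_b \subset \mathcal{M}$, giving $Q \mathop{\prec}_{\mathcal{M}} \mathcal{M}_b$ and contradicting one of the two hypotheses. The main technical point is exactly this polar-decomposition step: both the inheritance of the $\gamma$-eigenvector property by $w$ and the extension of the intertwining off $\range(|v_{ab}|)$ hinge on $|v_{ab}|$ lying in the $\varphi$-centralizer.
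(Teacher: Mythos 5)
Your proof is correct and follows the paper's own strategy: the same $M_2(\C)\otimes\mathcal{M}$ amplification with the block-diagonal subalgebra $\mathcal{M}_1\oplus\mathcal{M}_2$, the diagonal copy of $Q$ inside the centralizer, and Theorem \ref{newembed} applied to this amplified inclusion. The only divergence is the final extraction step: the paper passes from the amplified embedding to condition $(2)$ of Definition \ref{embed} (a nonzero $W$ with $\rho(Q)W\subset\sum_k W_k B$) and reads off a nonzero matrix block to conclude $Q\prec_{\mathcal{M}}\mathcal{M}_i$, whereas you decompose the intertwiner $v$ and $\theta$ into $2\times 2$ components and recover a genuine partial-isometry intertwiner via the polar decomposition (valid, as you note, because $|v_{ab}|$ lies in the centralizer and $\theta^b(Q)$ commutes with its support) --- both finishes are sound.
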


\begin{proof}
Denote by
\begin{equation*}
\mathcal{B} = \begin{pmatrix}
\mathcal{M}_1 & 0\\
0 & \mathcal{M}_2
\end{pmatrix} \subset M_2(\C) \otimes \mathcal{M},
\end{equation*}
and $B = \mathcal{B}^{\tr_2 \otimes \varphi}$. Define $\rho : Q \to M_2(\C) \otimes \mathcal{M}$ in the following way
\begin{equation*}
\rho(x) = {\begin{pmatrix}
x & 0\\
0 & x
\end{pmatrix}}, \forall x \in Q.
\end{equation*}
We still denote by $1_Q$ the unit of $M_2(\C) \otimes Q \subset M_2(\C) \otimes \mathcal{M}$. Assume that there is no sequence of unitaries $(u_k)$ in $Q$ such that for all $x, y \in 1_Q \mathcal{M}$ and all $i \in \{1, 2\}$, $\|E_{\mathcal{M}_i}(x^* u_k y)\|_2 \to~0$. It is equivalent to saying that there is no sequence of unitaries $(V_k)$ in $\rho(Q)$ such that $\|E_{\mathcal{B}}(X^* V_k Y)\|_{\tr_2 \otimes \varphi} \to 0$ for all $X, Y \in 1_Q (M_2(\C) \otimes \mathcal{M})$. Using our notation, we get
\begin{equation*}
\rho(Q) \mathop{\prec}_{M_2(\C) \otimes \mathcal{M}} \mathcal{B}.
\end{equation*}
Combining Theorem $\ref{newembed}$ and Definition $\ref{embed}$ (second point), we know that there exists $n \geq 1$, there exist a nonzero element $W$ in $1_Q (M_2(\C) \otimes \mathcal{M})$ and finitely many $W_1, \dots, W_n \in 1_Q (M_2(\C) \otimes \mathcal{M})$, such that $\rho(Q) W \subset \sum_{k = 1}^n W_k B$. Write
\begin{equation*}
W = \begin{pmatrix}
w^a & w^b\\
w^c & w^d
\end{pmatrix}, 
W_k = \begin{pmatrix}
w_k^a & w_k^b\\
w_k^c & w_k^d
\end{pmatrix}.
\end{equation*}
Thus, we obtain
\begin{equation*}
\begin{pmatrix}
Q w^a & Q w^b\\
Q w^c & Q w^d
\end{pmatrix} 
\subset
\begin{pmatrix}
\sum_{k = 1}^n w_k^a \mathcal{M}_1^{\varphi_1} & \sum_{k = 1}^n w_k^b \mathcal{M}_2^{\varphi_2} \\
\sum_{k = 1}^n w_k^c \mathcal{M}_1^{\varphi_1} & \sum_{k = 1}^n w_k^d \mathcal{M}_2^{\varphi_2}
\end{pmatrix}.
\end{equation*}
Since $W \neq 0$, there exists a letter $z \in \{a, b, c, d\}$ such that $w^z \neq 0$. So, there exists $i \in \{1, 2\}$, such that $Q w^z \subset \sum_{k = 1}^n w_k^z \mathcal{M}_i^{\varphi_i}$, and $w^z, w_1^z, \dots, w_n^z \in 1_Q \mathcal{M}$. Thus, combining once again Definition $\ref{embed}$ (second point) and Theorem $\ref{newembed}$, we have proven that there exists $i \in \{1, 2\}$ such that $\displaystyle{Q \mathop{\prec}_{\mathcal{M}} \mathcal{M}_i}$.
\end{proof}

\subsection{Controlling Quasi-Normalizers of Subalgebras of Free Products with Amalgamation.}

Let $Q \subset \mathcal{M}$ be a von Neumann subalgebra of $\mathcal{M}$. An element $x \in \mathcal{M}$ is said to \emph{quasi-normalize} $Q$ \emph{inside} $\mathcal{M}$ if there exist $x_1, \dots, x_k$ and $y_1, \dots, y_r$ in $\mathcal{M}$ such that
\begin{equation*}
xQ \subset \sum_{i = 1}^{k} Qx_i \, \mbox{ and } \, Qx \subset \sum_{j = 1}^{r} y_jQ.
\end{equation*}
The elements quasi-normalizing $Q$ inside $\mathcal{M}$ form a unital $\ast$-subalgebra of $\mathcal{M}$ and their weak closure is called the \emph{quasi-normalizer} of $Q$ inside $\mathcal{M}$. The inclusion $Q \subset \mathcal{M}$ is said to be \emph{quasi-regular} if $\mathcal{M}$ is the quasi-normalizer of $Q$ inside $\mathcal{M}$.

A typical example arises as follows: let $G$ be a countable group and let $H$ be an \emph{almost normal subgroup}, which means that $gHg^{-1} \cap H$ is a finite index subgroup of $H$ for every $g \in G$. It is straightforward to check that the inclusion $L(H) \subset L(G)$ is quasi-regular.

The next result is already known for finite von Neumann algebras: it is a result of Ioana, Peterson \& Popa (see Theorem $1.2.1$ in \cite{ipp}). For our purpose, we need to extend it to von Neumann algebras endowed with almost periodic states.

\begin{theo}\label{amalga}
Let $(\mathcal{M}_1, \varphi_1)$ and $(\mathcal{M}_2, \varphi_2)$ be von Neumann algebras with faithful normal almost periodic states and let $N \subset \mathcal{M}_i^{\varphi_i}$ be a von Neumann subalgebra for $i = 1, 2$. Set $\mathcal{M} = \displaystyle{\mathcal{M}_1 \mathop{\ast}_{N} \mathcal{M}_2}$. Let $Q \subset \mathcal{M}_1^{\varphi_1}$ be a possibly non-unital von Neumann subalgebra, and denote by $1_Q$ its unit. Assume that $\displaystyle{Q \mathop{\nprec}_{\mathcal{M}_1} N}$. Then, every $Q$-$\mathcal{M}_1^{\varphi_1}$ subbimodule $\mathcal{H}$ of $1_Q L^2(\mathcal{M})$ with finite trace over $\mathcal{M}_1^{\varphi_1}$, as a right $\mathcal{M}_1^{\varphi_1}$-module, is contained in $1_Q L^2(\mathcal{M}_1)$. In particular, the quasi-normalizer of $Q$ inside $1_Q \mathcal{M} 1_Q$ is included in $1_Q \mathcal{M}_1 1_Q$, and $1_Q \mathcal{M} 1_Q \cap Q' \subset 1_Q \mathcal{M}_1 1_Q$.
\end{theo}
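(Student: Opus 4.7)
The strategy is to mimic Ioana--Peterson--Popa's proof of Theorem $1.2.1$ in \cite{ipp}, using Theorem \ref{newembed} as a replacement for the tracial intertwining criterion. The idea is to contradict $\mathcal{H} \not\subset 1_Q L^2(\mathcal{M}_1)$ by producing a sequence of unitaries in $Q$ against which the right $\mathcal{M}_1^{\varphi_1}$-valued inner products on $\mathcal{H}$ must vanish, and then arguing that this forces $\mathcal{H}$ to be zero.

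First I would reduce to showing that any $Q$-$\mathcal{M}_1^{\varphi_1}$ subbimodule $\mathcal{H} \subset 1_Q(L^2(\mathcal{M}) \ominus L^2(\mathcal{M}_1))$ of finite trace over $\mathcal{M}_1^{\varphi_1}$ must vanish. The orthogonal projection $e_{\mathcal{M}_1}$ onto $L^2(\mathcal{M}_1)$ is $\mathcal{M}_1$-bimodular (it implements $E_{\mathcal{M}_1}$), hence commutes with both the left action of $Q \subset \mathcal{M}_1$ and the right action of $\mathcal{M}_1^{\varphi_1}$; so $(1 - e_{\mathcal{M}_1})\mathcal{H}$ is again a finite-trace $Q$-$\mathcal{M}_1^{\varphi_1}$ subbimodule, and vanishes exactly when $\mathcal{H} \subset 1_Q L^2(\mathcal{M}_1)$. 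Applying Lemma \ref{finiment}, I may then cut by a central projection $z \in \mathcal{Z}(\mathcal{M}_1^{\varphi_1})$ of trace close to $1$ so that $\mathcal{H}z$ is finitely generated over $\mathcal{M}_1^{\varphi_1}$, and assume $\mathcal{H}$ itself has this property.

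Next I would exploit the standard reduced-word decomposition of $L^2(\mathcal{M}_1 \mathop{\ast}_N \mathcal{M}_2) \ominus L^2(\mathcal{M}_1)$ as an orthogonal sum of relative tensor products (over $N$) of copies of $L^2(\mathcal{M}_2 \ominus N)$ and $L^2(\mathcal{M}_1 \ominus N)$, with a final $L^2(\mathcal{M}_1)$ absorbing the right $\mathcal{M}_1^{\varphi_1}$-action. The key computation is this: for any two bounded-length vectors $\xi, \eta$ and any $u \in Q$, the right $\mathcal{M}_1^{\varphi_1}$-valued inner product $E_{\mathcal{M}_1^{\varphi_1}}(\eta^* u \xi)$ expands, via iterated applications of $E_N$ and the $N$-bimodularity of each letter, into a finite sum of products involving factors of the form $E_N(a^* u b)$ for elements $a, b \in 1_Q \mathcal{M}_1$ depending only on the first letters of $\xi, \eta$. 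Since $Q \mathop{\nprec}_{\mathcal{M}_1} N$, Theorem \ref{newembed} yields a sequence of unitaries $(u_k) \subset Q$ with $\|E_N(a^* u_k b)\|_2 \to 0$ for all $a, b \in 1_Q \mathcal{M}_1$; combining this with the expansion above, with density of bounded-length vectors, and with finite generation of $\mathcal{H}$, one obtains $\|E_{\mathcal{M}_1^{\varphi_1}}(\eta^* u_k \xi)\|_2 \to 0$ for all $\xi, \eta \in \mathcal{H}$. But the left $Q$-action on a nonzero finite-trace right $\mathcal{M}_1^{\varphi_1}$-module cannot vanish in this sense along a sequence of unitaries, giving $\mathcal{H} = 0$.

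The two corollaries follow quickly. For any $x$ in the quasi-normalizer of $Q$ inside $1_Q\mathcal{M} 1_Q$, the relation $Qx \subset \sum_{j=1}^r y_j Q \subset \sum_{j=1}^r y_j \mathcal{M}_1^{\varphi_1}$ makes $\overline{Qx \mathcal{M}_1^{\varphi_1}}^{L^2}$ a finitely generated right $\mathcal{M}_1^{\varphi_1}$-module, hence of finite trace, and the main statement forces it into $1_Q L^2(\mathcal{M}_1)$, yielding $x \in 1_Q \mathcal{M}_1 1_Q$. The relative commutant case is the special instance $xQ = Qx$. The step I expect to be hardest is the explicit decomposition of $E_{\mathcal{M}_1^{\varphi_1}}(\eta^* u \xi)$ into $E_N$-terms in the almost periodic (non-tracial) setting, where one must keep careful track of the spectral weights arising from the inequality $(\ref{inequa})$ and of the interplay between the two basic constructions $\langle \mathcal{M}, e_N \rangle$ and $\langle \mathcal{M}, e_{\mathcal{M}_1} \rangle$.
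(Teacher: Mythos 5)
Your proposal is correct in substance and shares the paper's key lemma, but it finishes the argument by a genuinely different route. The heart of both arguments is the same: from $Q \mathop{\nprec}_{\mathcal{M}_1} N$ one gets unitaries $(u_k)$ in $Q$ with $\|E_N(a^*u_kb)\|_2 \to 0$, and then, peeling off the first letters of reduced words and using the eigenvector inequality $(\ref{inequa})$, one shows $\|E_{\mathcal{M}_1}(x u_k y)\|_2 \to 0$ for all $x, y \in \mathcal{M}\ominus\mathcal{M}_1$ (the paper's Claim $\ref{esperance}$); your ``key computation'' is exactly this, although the literal description as a ``finite sum of products of factors $E_N(a^*u b)$'' is not quite the right structure --- one gets a single inner term $E_N(a u b)$ wrapped inside a word, $E_{\mathcal{M}_1}(x u y)=E_{\mathcal{M}_1}(x'E_N(aub)y')$, and concludes by the norm bound $\lambda^{-1/2}\|x'\|\|y'\|\|E_N(aub)\|_2$ rather than by iterating $E_N$. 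Where you diverge is the endgame: the paper reduces to a single eigenspace $\mathcal{H}^\gamma$, builds $\theta$ and a vector $\xi$ with $x\xi=\xi\theta(x)$, and uses the polar decomposition of a vector (Proposition $\ref{polar}$) together with functional-calculus truncations $T_\Xi f_k(|T_\Xi|)$ to produce bounded elements $a_k$ which the Claim forces into $M_{1,n}(\C)\otimes 1_Q\mathcal{M}_1$, whence $\xi\in L^2(\mathcal{M}_1)$ in the limit; you instead cut by $1-e_{\mathcal{M}_1}$ (legitimate, since $e_{\mathcal{M}_1}$ is $\mathcal{M}_1$-bimodular) and run the standard ``no sequence of unitaries can kill the $\mathcal{M}_1^{\varphi_1}$-valued inner products of a nonzero finitely generated right module'' argument. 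Your route avoids Proposition $\ref{polar}$ and the eigenspace reduction and is closer to the proof of the intertwining criterion itself; its cost is that the two steps you state in one line each need care in the almost periodic setting: the passage from word vectors to arbitrary $\xi,\eta\in\mathcal{H}$ requires working with right-bounded vectors (so that $E_{\mathcal{M}_1^{\varphi_1}}(\eta^*u_k\xi)$ makes sense and the approximation is uniform in $k$, one side controlled by operator norms as in the paper's Claim), and the final contradiction requires a Pimsner--Popa type quasi-basis expansion of $u_k\xi$ over $\mathcal{M}_1^{\varphi_1}$, which is available here precisely because $\mathcal{M}_1^{\varphi_1}$ sits in the centralizer so the commutant trace framework of Section $\ref{ara}$ and Lemma $\ref{finiment}$ applies. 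Your deduction of the two corollaries (quasi-normalizer and relative commutant) is the same as the paper's.
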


\begin{proof}
The free product state will be denoted by $\varphi$. Let $\mathcal{A}$ be the linear subspace of $\mathcal{M} \ominus \mathcal{M}_1$ defined by
\begin{equation}\label{densite1}
\mathcal{A} = \span \{ \mathcal{M}_2 \ominus N, (\mathcal{M}_{i_1} \ominus N) \cdots (\mathcal{M}_{i_n} \ominus N) : n \geq 2, i_1 \neq \cdots \neq i_n \in \{1, 2\} \} 
\end{equation}
It is a well-known fact that $\mathcal{A}$ is $\sigma$-weakly dense in $\mathcal{M} \ominus \mathcal{M}_1$. Moreover, since $(\mathcal{M}_i, \varphi_i)$ is almost periodic (for $i = 1, 2$) and $N \subset \mathcal{M}_i^{\varphi_i}$, we have
\begin{equation}\label{densite2}
\mathcal{M}_i \ominus N = \overline{\span}^w \{ \mathcal{M}_i^{\varphi_i} \ominus N,   \mathcal{M}_i^\lambda : \lambda \in \Sp(\mathcal{M}_i, \varphi_i)\backslash\{1\} \}.
\end{equation}
Since $\displaystyle{Q \mathop{\nprec}_{\mathcal{M}_1} N}$, we know from Definition $\ref{embed}$ that there exists a sequence of unitaries $(u_k)$ in $Q$, such that for any $a, b \in 1_Q \mathcal{M}_1$, $\|E_N(a^* u_k b)\|_2 \to 0$.

\begin{claim}\label{esperance}
$\forall x, y \in \mathcal{M} \ominus \mathcal{M}_1, \|E_{\mathcal{M}_1}(x u_k y)\|_2 \to 0$.
\end{claim} 

\begin{proof}[Proof of Claim $\ref{esperance}$]
Let $x$ and $y$ be reduced words in $\mathcal{M}$ with letters alternatingly from $\mathcal{M}_1 \ominus N$ and $\mathcal{M}_2 \ominus N$. We assume that $x$ and $y$ contain at least a letter from $\mathcal{M}_2 \ominus N$. We moreover assume that all the letters of $y$ are eigenvectors for $\varphi$. We set $x = x'a$ with $a = 1$ if $x$ ends with a letter from $\mathcal{M}_2 \ominus N$ and  $a$ equal to the last letter of $x$ otherwise. Note that either $x'$ equals $1$ or is a reduced word ending with a letter from $\mathcal{M}_2 \ominus N$. In the same way, we set $y = by'$ with $b = 1$ if $y$ begins with a letter from $\mathcal{M}_2 \ominus N$ and $b$ equal to the first letter of $y$ otherwise. Note that either $y'$ equals $1$ or is a reduced word beginning with a letter from $\mathcal{M}_2 \ominus N$. Moreover, note that we cannot have at the same time $x' = y' = 1$, and $azb - E_N (azb) \in \mathcal{M}_1 \ominus N$. Then for $z \in Q$,  we have 
\begin{equation*}
E_{\mathcal{M}_1}(x z y) = E_{\mathcal{M}_1}(x' E_N(a z b) y').
\end{equation*}
Since all the letters of $y$ are eigenvectors for $\varphi$, there exists $\lambda > 0$ such that $y' \in \mathcal{M}^\lambda$.  Then 
\begin{eqnarray*}
\|E_{\mathcal{M}_1}(x z y)\|_2 & \leq & \|x' E_N(a z b) y'\|_2\\
& \leq & \lambda^{-1/2}\|x'\| \|y'\| \|E_N(a z b)\|_2.
\end{eqnarray*}
It follows that $\|E_{\mathcal{M}_1}(x u_k y)\|_2 \to 0$. More generally, with the same $y$, for any $x \in \mathcal{A}$, we have $\|E_{\mathcal{M}_1}(x u_k y)\|_2 \to 0$. 

We keep the same $y$, but now we take $x \in \mathcal{M} \ominus \mathcal{M}_1$. We can find a  sequence $(x_i)$ in $\mathcal{A}$ such that $\|x - x_i\|_2 \to 0$. Since $u_k \in Q \subset \mathcal{M}_1^{\varphi_1}$, it follows that $u_k y \in \mathcal{M}^\lambda$, $\forall n \in \N$. We get
\begin{eqnarray*}
\|(x - x_i) u_n y\|_2 & = & \|R_{u_n y} (\widehat{x - x_i})\|_2 \\
& \leq & \lambda^{-1/2} \|u_n y\| \|x - x_i\|_2 \\
& \leq & \lambda^{-1/2} \| y\| \|x - x_i\|_2 
\end{eqnarray*}
Take now $\varepsilon > 0$. Choose $i$ such that $\|x - x_i\|_2 \leq \varepsilon/(2 \lambda^{-1/2}\|y\|)$. Choose now $k_0 \in \N$, such that for any $k \geq k_0$, $\|E_{\mathcal{M}_1}(x_i u_k y)\|_2 \leq \varepsilon/2$. Write $E_{\mathcal{M}_1}(x u_k y) = E_{\mathcal{M}_1}((x - x_i) u_k y) + E_{\mathcal{M}_1}(x_i u_k y)$. For any $n \geq n_0$, we get
\begin{eqnarray*}
\|E_{\mathcal{M}_1}(x u_k y)\|_2 & \leq & \|E_{\mathcal{M}_1}((x - x_i) u_k y)\|_2 + \|E_{\mathcal{M}_1}(x_i u_k y)\|_2\\
& \leq & \|(x - x_i) u_k y)\|_2 + \|E_{\mathcal{M}_1}(x_i u_k y)\|_2\\
& \leq & \varepsilon.
\end{eqnarray*}
Denote by $\mathcal{E}$ the linear span of the $y$'s which are reduced words in $\mathcal{M}$ containing at least a letter from $\mathcal{M}_2 \ominus N$ and such that all the letters of  $y$ are eigenvectors for $\varphi$. We finally get that for any $x \in \mathcal{M} \ominus \mathcal{M}_1$ and any $y \in \mathcal{E}$, $\|E_{\mathcal{M}_1}(x u_k y)\|_2 \to 0$. Note that from $(\ref{densite1})$ and $(\ref{densite2})$, it is straightforward to check that $\mathcal{E}$ is ultraweakly dense in $\mathcal{M} \ominus \mathcal{M}_1$. 

At last, take $x, y \in \mathcal{M} \ominus \mathcal{M}_1$. As before, take $\varepsilon > 0$ and choose $z \in \mathcal{E}$, such that $\|x u_k (y - z)\|_2 \leq \varepsilon/2$, uniformly in $k \in \N$. Choose now $k_0 \in \N$, such that  for any $k \geq k_0$, $\|E_{\mathcal{M}_1}(x u_k z)\|_2 \leq \varepsilon/2$. Write $E_{\mathcal{M}_1}(x u_k y) = E_{\mathcal{M}_1}(x u_k (y - z)) + E_{\mathcal{M}_1}(x u_k z)$. For any $k \geq k_0$, we get
\begin{eqnarray*}
\|E_{\mathcal{M}_1}(x u_k y)\|_2 & \leq & \|E_{\mathcal{M}_1}(x u_k (y - z))\|_2 + \|E_{\mathcal{M}_1}(x u_k z)\|_2\\
& \leq & \|x u_k (y - z)\|_2 + \|E_{\mathcal{M}_1}(x u_k z)\|_2\\
& \leq & \varepsilon.
\end{eqnarray*}
Consequently, for any $x, y \in \mathcal{M} \ominus \mathcal{M}_1$, $\|E_{\mathcal{M}_1}(x u_k y)\|_2 \to 0$. The claim is proven.
\end{proof}

Let $\mathcal{H}$ be a $Q$-$\mathcal{M}_1^{\varphi_1}$-subbimodule of $1_Q L^2(\mathcal{M})$ with finite trace over $\mathcal{M}_1^{\varphi_1}$. Since $\varphi$ is almost periodic, we can write 
\begin{equation*}
\mathcal{H} = \bigoplus_{\gamma \in \Sp(\mathcal{M}, \varphi)} \mathcal{H}^\gamma
\end{equation*}
where all the elements of $\mathcal{H}^\gamma$ are $\gamma$-eigenvectors for $\varphi$. Note that $\mathcal{H}^\gamma$ is nothing but $p_\gamma \mathcal{H}$. Since $Q, \mathcal{M}_1^{\varphi_1} \subset \mathcal{M}^\varphi$, it follows that $p_\gamma \in \langle \mathcal{M}, e_{\mathcal{M}_1^{\varphi_1}} \rangle \cap Q'$ and thus, each of the $\mathcal{H}^\gamma$'s is a $Q$-$\mathcal{M}_1^{\varphi_1}$-subbimodule of $1_Q L^2(\mathcal{M})$ with finite trace over $\mathcal{M}_1^{\varphi_1}$, as a right $\mathcal{M}_1^{\varphi_1}$-module. So, we can assume that $\mathcal{H} = \mathcal{H}^\gamma$ for some $\gamma \in \Sp(\mathcal{M}, \varphi)$. From Lemma $\ref{finiment}$, we know that there exists a sequence $(z_k)$ of central projections in $\mathcal{M}_1^{\varphi_1}$ such that $\mathcal{H}z_k$ is finitely generated as a right $\mathcal{M}_1^{\varphi_1}$-module and $\varphi(z_k) \to 1$. If we prove that $\mathcal{H}z_k \subset 1_Q L^2(\mathcal{M}_1)$, $\forall k \in \N$, we are done. Indeed, assume that $\mathcal{H}z_k \subset 1_Q L^2(\mathcal{M}_1)$, $\forall k \in \N$. Since $\varphi(z_k) \to 1$, it follows that $z_k \to 1$ strongly. Thus, $\forall \xi \in \mathcal{H}$, $\xi = \lim_{k \to \infty} \xi z_k \in 1_Q L^2(\mathcal{M}_1)$. Consequently, $\mathcal{H} \subset 1_Q L^2(\mathcal{M}_1)$.

From now on, we assume that $\mathcal{H} \subset 1_Q L^2(\mathcal{M}^\gamma)$, and $\mathcal{H}$ is finitely generated as a right $\mathcal{M}_1^{\varphi_1}$-module. Then, there exist $n \geq 1$, a projection $p \in (\mathcal{M}_1^{\varphi_1})^n$ and a right $\mathcal{M}_1^{\varphi_1}$-module isomorphism
\begin{equation*}
\psi : pL^2(\mathcal{M}_1^{\varphi_1})^{\oplus n} \to \mathcal{H}.
\end{equation*}
Since $\mathcal{H}$ is a left $Q$-module, there exists a (unital) $\ast$-homomorphism $\theta : Q \to p (\mathcal{M}_1^{\varphi_1})^n p$ such that for every $\eta \in pL^2(\mathcal{M}_1^{\varphi_1})^{\oplus n}$, and every $x \in Q$, $x \psi(\eta) = \psi(\theta(x) \eta)$. For $i \in \{1, \dots, n\}$, let $e_i = (0, \dots, \widehat{1}, \dots, 0) \in L^2(\mathcal{M}_1^{\varphi_1})^{\oplus n}$ and $\xi_i = \psi(p e_i)$. Let $\xi = (\xi_1, \dots, \xi_n) \in M_{1, n}(\C) \otimes \mathcal{H}$. As in Theorem $\ref{newembed}$, we can prove that $x \xi = \xi \theta(x)$, for every $x \in Q$. In the von Neumann algebra $\mathcal{M}^{n + 1} \subset B(L^2(\mathcal{M}) \oplus L^2(\mathcal{M})^{\oplus n})$, define as before
\begin{equation*}
X_x = 
\begin{pmatrix}
x & 0 \\
0 & \theta(x)
\end{pmatrix}, \forall x \in Q. 
\end{equation*}
In the space $L^2(\mathcal{M}^{n + 1})$, define
\begin{equation*}
\Xi = \begin{pmatrix}
0 & \xi \\
0 & 0
\end{pmatrix}.
\end{equation*}
Thus, we obtain $X_x \Xi = \Xi X_x$, for every $x \in Q$. We still denote by $1_Q$ the unit of $Q^{n + 1} \subset \mathcal{M}^{n + 1}$. Since $\Xi$ is a $\gamma$-eigenvector in $\mathcal{M}^{n + 1}$ for the state $\tr_{n + 1} \otimes \varphi$, we can define as before $T_\Xi$ and write $T_\Xi = V |T_\Xi|$ the polar decomposition of $T_\Xi$. We get $X_x V = V X_x$, for every $x \in Q$. Let $f : \R_+ \to \C$ be a bounded Borel function with compact support. By functional calculus, $T_\Xi f(|T_\Xi|) \in 1_Q \mathcal{M}^{n + 1}$. Write 
\begin{equation*}
T_\Xi f(|T_\Xi|) = 
\begin{pmatrix}
y & a \\
a' & z
\end{pmatrix},
\end{equation*} 
with $a \in M_{1, n}(\C) \otimes 1_Q \mathcal{M}$. It is straightforward to check that $x a = a \theta(x)$, for every $x \in Q$. Since $Q, \mathcal{M}_1^{\varphi_1} \subset \mathcal{M}_1$, $x (1 \otimes E_{\mathcal{M}_1})(a) = (1 \otimes E_{\mathcal{M}_1})(a) \theta(x)$, for every $x \in Q$. Write $b = a - (1 \otimes E_{\mathcal{M}_1})(a)$. Note that $b \in M_{1, n}(\C) \otimes 1_Q \mathcal{M}$ and $(1 \otimes E_{\mathcal{M}_1})(b) = 0$. We have $x b = b \theta(x)$, for every $x \in Q$ (note that $\range (b^*b) \subset \range p$). Since $(1 \otimes E_{\mathcal{M}_1})(b) = 0$, we have $\|(\id \otimes E_{\mathcal{M}_1})(b^*u_k b)\|_{\tr_n \otimes \varphi} \to 0$, thanks to Claim $\ref{esperance}$. Since $b^*u_kb = \theta(u_k) b^*b$ and $\theta(u_k) \in \mathcal{U}(p(\mathcal{M}_1^{\varphi_1})^n p)$, we get  
\begin{eqnarray*}
\|(\id \otimes E_{\mathcal{M}_1})(b^*b)\|_{\tr_n \otimes \varphi} & = & \|\theta(u_k) (\id \otimes E_{\mathcal{M}_1})(b^*b)\|_{\tr_n \otimes \varphi} \\
& = & \|(\id \otimes E_{\mathcal{M}_1})(\theta(u_k) b^*b)\|_{\tr_n \otimes \varphi} \\
& = & \|(\id \otimes E_{\mathcal{M}_1})(b^*u_k b)\|_{\tr_n \otimes \varphi} \to 0.
\end{eqnarray*}
Consequently, $(\id \otimes E_{\mathcal{M}_1})(b^*b) = 0$ and so $b = 0$. Thus, $a = (1 \otimes E_{\mathcal{M}_1})(a)$ and so $a \in M_{1, n}(\C) \otimes 1_Q \mathcal{M}_1$.

Take now $f_k = \chi_{[0, k]}$, the characteristic function of the interval $[0, k]$, for each $k \in \N^*$ and write
\begin{equation*}
T_\Xi f_k(|T_\Xi|) = 
\begin{pmatrix}
y_k & a_k \\
a'_k & z_k
\end{pmatrix}.
\end{equation*} 
Applying what we have done, we get $a_k \in M_{1, n}(\C) \otimes 1_Q \mathcal{M}_1$, for every $k \geq 1$. Denote by $P_1$ the orthogonal projection from $1_Q L^2(\mathcal{M})$ onto $1_Q L^2(\mathcal{M}_1)$. Then, for any $k \geq 1$, we have $(1 \otimes P_1)\widehat{a_k} = \widehat{a_k}$. Since $\widehat{a_k} \to \xi$, as $k \to \infty$, we get $(1 \otimes P_1)\xi = \xi$, and so $\xi \in M_{1, n}(\C) \otimes 1_Q L^2(\mathcal{M}_1)$. But the $\xi_i$'s generate $\mathcal{H}$ as a right $\mathcal{M}_1^{\varphi_1}$-module. Thus, $\mathcal{H} \subset 1_Q L^2(\mathcal{M}_1)$.

Take now $x \in 1_Q \mathcal{M} 1_Q$ that quasi-normalizes $Q$ inside $1_Q \mathcal{M} 1_Q$. In particular, there exist $r \geq 1$, $y_1, \dots, y_r \in 1_Q\mathcal{M}1_Q$ such that $Q x \subset \sum_{k = 1}^r y_k Q$. Define $\mathcal{H}$ the $Q$-$\mathcal{M}_1^{\varphi_1}$ subbimodule of $1_Q L^2(\mathcal{M})$ by $\mathcal{H} = \overline{Q x \mathcal{M}_1^{\varphi_1}}$. Since $\mathcal{H} \subset \sum_{k = 1}^r \overline{y_k \mathcal{M}_1^{\varphi_1}}$, it follows that $\mathcal{H}$ is of finite trace over $\mathcal{M}_1^{\varphi_1}$ as a right $\mathcal{M}_1^{\varphi_1}$-module. Thus, $\mathcal{H} \subset 1_Q L^2(\mathcal{M}_1)$. So, $x \in \mathcal{M}_1$. Consequently, the quasi-normalizer of $Q$ inside $\mathcal{M}$ is included in $1_Q \mathcal{M}_1 1_Q$. Obviously, we get also $1_Q \mathcal{M} 1_Q \cap Q' \subset 1_Q \mathcal{M}_1 1_Q$.
\end{proof}

\section{Type ${\rm II_1}$ Factors with Prescribed Countable Fundamental Group}\label{fun}

\subsection{Intertwining Rigid Subalgebras of Crossed Products}

\begin{nota}\label{interpretation}
\emph{Let $\sigma : G \to \Aut(\mathcal{N}, \varphi)$ be a state-preserving action, with $\varphi$ an almost periodic state. We adopt the following notation: 
\begin{enumerate}
\item $\mathcal{M} = \mathcal{N} \rtimes G$, with the action $(\sigma_g)$.
\item $\mathcal{M}_1 = (\mathcal{N} \ast \C) \rtimes G$, with the action $(\sigma_g \ast \id)$.
\item $\mathcal{M}_2 = (\C \ast \mathcal{N}) \rtimes G$, with the action $(\id \ast \sigma_g)$.
\item $\widetilde{\mathcal{M}} = (\mathcal{N} \ast \mathcal{N}) \rtimes G$, with the diagonal action $(\sigma_g \ast \sigma_g)$.
\item $N = \mathcal{N}^\varphi$ and $\widetilde{N} = (\mathcal{N} \ast \mathcal{N})^{\varphi \ast \varphi}$.
\item $M = N \rtimes G$ and $\widetilde{M} = \widetilde{N} \rtimes G$.
\end{enumerate}
It is clear that $M = \mathcal{M}^\varphi$ and $\widetilde{M} = \widetilde{\mathcal{M}}^{\varphi \ast \varphi}$. We shall identify $\mathcal{M}$ with $\mathcal{M}_1$. We regard $\mathcal{M}_{1, 2}$ as  subalgebras of $\widetilde{\mathcal{M}}$ by considering $\mathcal{N} \ast \C$ and $\C \ast \mathcal{N} \subset \mathcal{N} \ast \mathcal{N}$. Moreover, canonically we have the following isomorphism:}
\begin{equation*}
\widetilde{\mathcal{M}} \cong \mathcal{M}_1 \mathop{\ast}_{L(G)} \mathcal{M}_2.
\end{equation*}
\end{nota}

The next theorem is an analogue of a result by Popa (see Theorem $4.4$ in \cite{popamal1}). In the context of {\it free} malleable actions, a {\it gauged extension} for the action $\sigma$ (see Section $1$ in \cite{popamal1}) no longer makes sense. However, regarding a crossed product as a free product with amalgamation (Notation $\ref{interpretation}$), and using {\it free etymology} techniques as in the proof of Theorem $\ref{amalga}$, we are able to prove the following result.

\begin{theo}\label{rigidembed}
Let $\sigma : G \to \Aut(\mathcal{N}, \varphi)$ be a state-preserving \emph{s-malleable} (freely) mixing action. We shall freely use Notation $\ref{interpretation}$. Let $Q \subset M$ be a \emph{diffuse} subalgebra with the \emph{relative property} $(T)$. Denote by $P$ the quasi-normalizer of $Q$ inside $M$.

Then, there exist $\gamma > 0$, $n \geq 1$ and a nonzero partial isometry $v \in M_{1, n}(\C) \otimes \mathcal{M}$ which is a $\gamma$-eigenvector for $\varphi$ and satisfies
\begin{equation*}
vv^* \in P \cap Q', \; v^*v \in L(G)^n, \; v^*Qv \subset v^*Pv \subset v^*v (M_n(\C) \otimes L(G)) v^*v.
\end{equation*}
\end{theo}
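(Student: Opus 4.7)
The plan is to follow Popa's deformation/rigidity template, adapted to the free malleable setting. First I would extend $(\alpha_t)$ and $\beta$ from $\mathcal{N}\ast\mathcal{N}$ to $\widetilde{\mathcal{M}}$ by declaring $\alpha_t(u_g)=u_g=\beta(u_g)$; the extended $\alpha_t$ preserves $\varphi\ast\varphi$, fixes $L(G)$ pointwise, and satisfies $\alpha_1(\mathcal{M}_1)=\mathcal{M}_2$. Under $\widetilde{\mathcal{M}}=\mathcal{M}_1\ast_{L(G)}\mathcal{M}_2$, the conditional expectation satisfies $E_{\mathcal{M}_1}(y)=E_{L(G)}(y)$ for all $y\in\mathcal{M}_2$.

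Next, I would consider the compositions $\phi_t:=E_M\circ\alpha_t|_M$: these are trace-preserving normal subunital completely positive maps on $M$ converging pointwise in $\|\cdot\|_2$ to the identity. The relative property $(T)$ of $Q\subset M$ yields uniform convergence on $\mathcal{U}(Q)$, and the identity $\|\alpha_t(u)-u\|_{\varphi\ast\varphi}^2\leq 2\|\phi_t(u)-u\|_2$ (valid for $u\in\mathcal{U}(M)$) transfers this uniform smallness to $\alpha$ itself. Combining it with Popa's transversality identity $\|\alpha_t(u)-E_{\widetilde{\mathcal{M}}^\beta}(\alpha_t(u))\|_2=\tfrac{1}{2}\|\alpha_{2t}(u)-u\|_2$ (derived from $\beta\alpha_t=\alpha_{-t}\beta$ together with $\beta(u)=u$ for $u\in\mathcal{M}_1$), and invoking the free mixing hypothesis to bridge $\widetilde{\mathcal{M}}^\beta$ with $\mathcal{M}_1$ modulo $L(G)$, I expect to bootstrap to the key uniform bound
\[
\sup_{u\in\mathcal{U}(Q)}\|\alpha_1(u)-E_{\mathcal{M}_1}(\alpha_1(u))\|_{\varphi\ast\varphi}<1.
\]

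Then I would argue by contradiction: if $Q\mathop{\nprec}_{\mathcal{M}} L(G)$, Definition \ref{embed} produces $(u_k)\subset\mathcal{U}(Q)$ with $\|E_{L(G)}(u_k)\|_2\to 0$. Since $\alpha_1$ commutes with $E_{L(G)}$, one has $E_{\mathcal{M}_1}(\alpha_1(u_k))=E_{L(G)}(\alpha_1(u_k))=\alpha_1(E_{L(G)}(u_k))=E_{L(G)}(u_k)\to 0$, so $\|\alpha_1(u_k)-E_{\mathcal{M}_1}(\alpha_1(u_k))\|_{\varphi\ast\varphi}\to 1$ by Pythagoras---contradicting the uniform bound. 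Hence $Q\mathop{\prec}_{\mathcal{M}} L(G)$, and Theorem \ref{newembed} supplies $n\geq 1$, $\gamma>0$, a partial isometry $v\in M_{1,n}(\C)\otimes\mathcal{M}$ (a $\gamma$-eigenvector of $\varphi$), a projection $p\in L(G)^n$, and $\theta:Q\to pL(G)^n p$ with $xv=v\theta(x)$ and $v^*v\leq p$.

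Finally, to extend the intertwining to the quasi-normalizer, I would choose $v$ with $vv^*$ of maximal $\varphi$-trace among such intertwiners. For $y\in P$ with $Qy\subset\sum_j y_jQ$, the row $(v,y_1v,\ldots,y_rv,yv)$ implements a larger intertwining of $Q$ (with $\theta'$ built using $xy=\sum y_jx_j$ for $x\in Q$); maximality of $vv^*$ then forces $yv\in vL^2(L(G))^n$, and identifying the commutant of the right $L(G)$-action on $vL^2(L(G))^{\oplus n}$ with $v^*v(M_n(\C)\otimes L(G))v^*v$ yields $v^*yv\in v^*v(M_n(\C)\otimes L(G))v^*v$. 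Since $vv^*\in\mathcal{M}^\varphi=M$ (by the eigenvector condition), commutes with $Q$ by adjointing $xv=v\theta(x)$, and any element of $Q'\cap M$ trivially quasi-normalizes $Q$, we obtain $vv^*\in P\cap Q'$. The technical heart is the bootstrap step: the transversality identity controls the orthogonal complement in $\widetilde{\mathcal{M}}^\beta$---a subalgebra typically strictly larger than $\mathcal{M}_1$---so the free mixing hypothesis is essential to bridge this gap and upgrade the (typically linear-in-$t$) rigidity estimate to a uniform bound strictly less than $1$ at the endpoint $t=1$.
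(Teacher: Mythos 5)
There is a genuine gap, and it sits exactly at the step you yourself flag as the "technical heart". Your key uniform bound $\sup_{u\in\mathcal{U}(Q)}\|\alpha_1(u)-E_{\mathcal{M}_1}(\alpha_1(u))\|_2<1$ is, by the computation you indicate ($E_{\mathcal{M}_1}(\alpha_1(u))=E_{L(G)}(u)$ and Pythagoras), equivalent to $\inf_{u\in\mathcal{U}(Q)}\|E_{L(G)}(u)\|_2>0$. This is strictly stronger than the conclusion $Q\mathop{\prec}_{\mathcal{M}}L(G)$ (which only forbids $\|E_{L(G)}(a^*u_kb)\|_2\to 0$ for \emph{all} $a,b$, not for $a=b=1$), and it is simply false under the hypotheses of the theorem: take $Q=wL(H)w^*$ for a $w$-rigid $G$ with relative property $(T)$ subgroup $H$ and a unitary $w\in\mathcal{U}(N)$ with $\varphi(w)=0$; this $Q$ is diffuse and rigid in $M$, yet for $v_h=wu_hw^*$ one has $\|E_{L(G)}(v_h)\|_2=|\varphi(w\sigma_h(w^*))|\to 0$ by mixing. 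So no correct argument can establish your uniform bound, and the contradiction scheme built on it (which only exploits the case $a=b=1$ of the intertwining criterion) collapses. The proposed derivation also fails quantitatively: relative property $(T)$ gives, for each $\varepsilon$, some $t=2^{-s}$ with $\sup_u\|\alpha_t(u)-u\|_2\le\varepsilon$, and the transversality identity only yields $\|\alpha_{2t}(u)-u\|_2\le 2\|\alpha_t(u)-E(\alpha_t(u))\|_2$, so each doubling of $t$ loses a factor $2$; after $s$ doublings you reach $t=1$ with the useless bound $2^s\varepsilon$, where $s$ blows up as $\varepsilon\to 0$. This is precisely the obstruction that the paper's argument avoids by working at the level of operators rather than norms: property $(T)$ gives a nonzero partial isometry $v\in\widetilde{M}$ with $xv=v\alpha_t(x)$, the automorphism $\beta$ is used to pass from $t$ to $2t$ via $w=\alpha_t(\beta(v^*)v)$ while keeping the partial isometry nonzero (here Theorem \ref{amalga} is needed to know $vv^*\in M$, so that $\beta$ fixes it), and at $t=1$ the contradiction is obtained from $\|v^*v\|_2=\|E_{\mathcal{M}_2}(v^*u_kv)\|_2\to 0$, i.e. the intertwining criterion with $a=b=v$ and the free-product estimate of Claim \ref{esperance2} — not from a uniform bound over $\mathcal{U}(Q)$.

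Two further points. First, your use of the free mixing hypothesis is misplaced: it is not needed (and would not help) to "bridge $\widetilde{\mathcal{M}}^\beta$ with $\mathcal{M}_1$"; in the paper the amalgamated free product structure (Theorem \ref{amalga}) does all the work in the deformation steps, and mixing enters only afterwards, to control the quasi-normalizer: once $xw=w\theta(x)$ with $\theta(Q)\subset pL(G)^np$ diffuse, Popa's Theorem $3.1$ of \cite{popamal1} (valid because the action is mixing) shows the quasi-normalizer of $\theta(Q)$ in $pM^np$ lies in $pL(G)^np$, and a direct computation pushes $w^*Pw$ into that quasi-normalizer. Second, your alternative treatment of the quasi-normalizer via "maximality of $\varphi(vv^*)$" is not justified as written: maximality arguments of that kind serve to enlarge the support of an intertwiner, but they do not by themselves force $yv\in v\,L^2(L(G))^{\oplus n}$ for a quasi-normalizing $y$; some control of finite-index-type bimodules over $L(G)$ inside $L^2(\mathcal{M})$ — exactly what the mixing property provides — is indispensable here.
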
 
 
\begin{proof} We take $(\alpha_t)$ and $\beta$ as in Definition $\ref{malleable}$. We extend $(\alpha_t)$ and $\beta$ to $\widetilde{\mathcal{M}}$.

{\bf Step (1): Using the relative property $(T)$.} For every $t \in \R$, we have the following $Q$-$Q$-bimodule $\mathcal{H}_t = L^2(\widetilde{M})$, with
\begin{eqnarray*}
x \cdot \xi & = & x\xi \\
\xi \cdot x & = & \xi \alpha_t(x),
\end{eqnarray*}
for all $x \in Q$, $\xi \in L^2(\widetilde{M})$. Since the action $(\alpha_t)$ is continuous, we have $\mathcal{H}_t \to \mathcal{H}_0$ as $t \to 0$, in the sense of correspondences. The relative property $(T)$ yields $t = 2^{-s}$, $s \in \N^*$ and $\xi \in \mathcal{H}_t$, $\xi \neq 0$,  such that
\begin{equation*}
x \xi = \xi \alpha_t(x), \forall x \in Q.
\end{equation*} 
Taking the polar decomposition of the vector $\xi$ (see Proposition \ref{polar}), we find a nonzero partial isometry $v \in \widetilde{M}$ satisfying
\begin{equation}\label{equamal}
xv = v\alpha_t(x), \forall x \in Q.
\end{equation}

{\bf Step (2): Proving $\displaystyle{Q \mathop{\prec}_{\mathcal{M}_1} L(G)}$ using the amalgamation over $L(G)$.} Assume that $\displaystyle{Q \mathop{\nprec}_{\mathcal{M}_1} L(G)}$. We shall obtain a contradiction. Definition $\ref{embed}$ yields a sequence of unitaries $(u_k)$ in $Q$ such that for any $a, b \in \mathcal{M}_1$, $\|E_{L(G)}(a u_k b)\|_2 \to 0$.

First of all, we shall find a nonzero partial isometry in $\widetilde{M}$ satisfying Equation $(\ref{equamal})$ for $t = 1$. In order to do so, it suffices to prove the existence of a nonzero partial isometry $w \in \widetilde{M}$ satisfying $xw = w\alpha_{2t}(x)$ for all $x \in Q$. Indeed, iterating the procedure then allows to continue till $t = 1$. Thanks to Theorem $\ref{amalga}$, with $N = L(G)$, we get $\widetilde{\mathcal{M}} \cap Q' \subset \mathcal{M}_1$ and so $\widetilde{M} \cap Q' \subset M$. In particular, $vv^* \in M$. We write $vv^* = p$, with $p \in M$. Using the properties of $\beta$ (in particular, $\beta(x) = x$ for all $x \in M$) and Equation $(\ref{equamal})$, one checks that $w := \alpha_t(\beta(v^*)v)$ is an element of $\widetilde{M}$ satisfying $xw = w\alpha_{2t}(x)$ for all $x \in Q$. Indeed, for any $x \in Q$,
\begin{eqnarray*}
w\alpha_{2t}(x) & =&  \alpha_t(\beta(v^*)v\alpha_t(x)) \\
& = & \alpha_t(\beta(v^*)xv) \\
& = & \alpha_t(\beta(v^*x) v) \\
& = & \alpha_t(\beta(\alpha_t(x)v^*)v) \\
& = & \alpha_t \beta \alpha_t(x) \alpha_t(\beta(v^*)v) \\
& = & \beta(x)w \\
& = & xw.
\end{eqnarray*}
Moreover,
\begin{equation*}
ww^* = \alpha_t(\beta(v^*)p\beta(v)) = \alpha_t(\beta(v^*pv)) = \alpha_t(\beta(v^*v)).
\end{equation*}
The last term is a nonzero projection. So, $w$ is the required nonzero partial isometry. Thus, we have found a nonzero partial isometry $v \in \widetilde{M}$ satisfying
\begin{equation*}
xv = v\alpha_1(x), \forall x \in Q.
\end{equation*}
 
Observe now that using the second point of Definition $\ref{embed}$, we get $\displaystyle{\alpha_1(Q) \mathop{\nprec}_{\mathcal{M}_2} L(G)}$, since $\alpha_1(L(G)) = L(G)$ and $\alpha_1(\mathcal{M}_1) = \mathcal{M}_2$. Thus, by Theorem $\ref{amalga}$, we get $\widetilde{\mathcal{M}} \cap \alpha_1(Q)' \subset \mathcal{M}_2$ and so $\widetilde{M} \cap \alpha_1(Q)' \subset \alpha_1(M)$. In particular, $v^*v \in \alpha_1(M)$. 

\begin{claim}\label{esperance2}
$\forall x, y \in \widetilde{\mathcal{M}}, \| E_{\mathcal{M}_2}(x u_k y) \|_2 \to 0$.
\end{claim}

\begin{proof}[Proof of Claim $\ref{esperance2}$]
Regarding $\displaystyle{\widetilde{\mathcal{M}} = \mathcal{M}_1 \mathop{\ast}_{L(G)} \mathcal{M}_2}$, let $x, y \in \widetilde{\mathcal{M}}$ be either in $L(G)$ or reduced words in $\widetilde{\mathcal{M}}$ with letters alternatingly from $\mathcal{M}_1 \ominus L(G)$ and $\mathcal{M}_2 \ominus L(G)$. We assume as in the proof of Claim $\ref{esperance}$ that all the letters of $y$ are eigenvectors for $\varphi$. We set $x = x'a$ with $a = x$ if $x \in L(G)$, $a = 1$ if $x$ ends with a letter from $\mathcal{M}_2 \ominus L(G)$ and  $a$ equal to the last letter of $x$ otherwise. Note that $x'$ is either equal to $1$ or a reduced word ending with a letter from $\mathcal{M}_2 \ominus L(G)$. In the same way, we set $y = by'$ with $b = y$ if $y \in L(G)$, $b = 1$ if $y$ begins with a letter from $\mathcal{M}_2 \ominus L(G)$ and $b$ equal to the first letter of $y$ otherwise. Note that $y'$ is either equal to $1$ or a reduced word beginning with a letter from $\mathcal{M}_2 \ominus L(G)$. Then for $z \in Q$,  we have $azb - E_{L(G)}(azb) \in \mathcal{M}_1 \ominus L(G)$, and thus
\begin{equation*}
E_{\mathcal{M}_2}(x z y) = E_{\mathcal{M}_2}(x' E_{L(G)}(a z b) y').
\end{equation*}
Since all the letters of $y$ are eigenvectors for $\varphi$, there exists $\lambda > 0$ such that $y' \in \widetilde{\mathcal{M}}^\lambda$. Therefore,
\begin{eqnarray*}
\|E_{\mathcal{M}_2}(x z y)\|_2 & \leq & \| x' E_{L(G)}(a z b) y'\|_2\\
& \leq & \lambda^{-1/2} \|x'\| \|y'\| \| E_{L(G)}(a z b)\|_2.
\end{eqnarray*}
It follows that $\|E_{\mathcal{M}_2}(x u_k y)\|_2 \to 0$. We can proceed exactly the way we did in the proof of Claim $\ref{esperance}$, in order to obtain that $\|E_{\mathcal{M}_2}(x u_k y)\|_2 \to 0$, for every $x, y \in \widetilde{\mathcal{M}}$.
\end{proof}

We remind that for any $x \in Q$, $v^*xv = \alpha_1(x)v^*v$. Moreover, $v^*v \in\alpha_1(M) \subset \mathcal{M}_2$. So, for any $x \in Q$, $v^*xv \in \mathcal{M}_2$. Since $\alpha_1(u_k) \in \mathcal{U}(\mathcal{M}_2)$, we get
\begin{equation*}
\|v^*v\|_2 = \|\alpha_1(u_k)v^*v\|_2 = \|E_{\mathcal{M}_2}(\alpha_1(u_k)v^*v)\|_2 = \|E_{\mathcal{M}_2}(v^*u_kv)\|_2 \to 0.
\end{equation*}
Thus $v = 0$, which is a contradiction.

{\bf Step (3): Using the mixing property of the action to conclude.} 
From Definition $\ref{embed}$, we get $\gamma > 0$, $n \geq 1$, $p$ a projection in $L(G)^n$, a (unital) $\ast$-homomorphism $\theta : Q \to pL(G)^np$ and a nonzero partial isometry $w \in M_{1, n}(\C) \otimes \mathcal{M}$ such that $w$ is a $\gamma$-eigenvector for $\varphi$ and $xw = w\theta(x)$ for all $x \in Q$. It follows that $w^*w \in pM^np \cap \theta(Q)'$. Since $\theta(Q)$ is diffuse and since the action is mixing, the quasi-normalizer of $\theta(Q)$ inside $pM^np$ is included in $pL(G)^np$ by Theorem $3.1$ of \cite{popamal1} (see also Theorem D.4 of \cite{vaesbern}). Take now $x$ that quasi-normalizes $Q$ inside $M$. Thus, there exist $x_1, \dots, x_k$ and $y_1, \dots, y_r$ in $M$ such that
\begin{equation*}
xQ \subset \sum_{i = 1}^{k} Qx_i \, \mbox{ and } \, Qx \subset \sum_{j = 1}^{r} y_jQ.
\end{equation*}
Observe moreover that  $ww^* \in M \cap Q'$. We get
\begin{eqnarray*}
\theta(Q)w^*xw & \subset & w^*Qxw \\
 & \subset & \sum w^*y_jQw\\
 & \subset & \sum w^*y_jw\theta(Q).
\end{eqnarray*}
Exactly in the same way, we prove that
\begin{equation*}
w^*xw\theta(Q)  \subset  \sum \theta(Q)w^*x_iw.
\end{equation*}
Thus $w^*Pw$ is included in the quasi-normalizer of $\theta(Q)$ inside $pM^np$, and so $w^*Pw \subset pL(G)^np$. Since obviously, $M \cap Q' \subset P$, we can take $v = w$ to conclude. 
\end{proof}

\begin{rem}\label{remimportante}
\emph{Note that we used the period 2 automorphism $\beta$ in a very crucial way. We do not know if the result still holds true for a \emph{malleable} action, even if $P$ (the quasi-normalizer of $Q$ in $M = N \rtimes G$) is assumed to be a factor. Remark $4.5$ in \cite{ioana3} may shed light on this problem.
}
\end{rem}

\subsection{Intertwining Rigid Subalgebras of Free Products with Amalgamation}

\begin{nota}\label{interpretation2}
\emph{Let $(\mathcal{M}_1, \varphi_1)$ and $(\mathcal{M}_2, \varphi_2)$ be von Neumann algebras endowed with almost periodic states. Assume that $N \subset \mathcal{M}_i^{\varphi_i}$ for $i = 1, 2$. We write
\begin{enumerate}
\item $\mathcal{M} = \displaystyle{\mathcal{M}_1 \mathop{\ast}_{N} \mathcal{M}_2}$, with $\varphi$ the free product state.
\item $M$ denotes the centralizer of $\mathcal{M}$.
\item $\widetilde{\mathcal{M}}_i = \displaystyle{\mathcal{M}_i \mathop{\ast}_{N} (N \otimes L(\Z))}$, denoting by $u_i \in L(\Z)$ the canonical generating unitary sitting in $\widetilde{\mathcal{M}}_i$.
\item $\widetilde{\mathcal{M}} = \displaystyle{\mathcal{M} \mathop{\ast}_{N} (N \otimes L(\mathbf{F}_2))} = \displaystyle{\widetilde{\mathcal{M}}_1 \mathop{\ast}_{N} \widetilde{\mathcal{M}}_2}$. 
\item $\widetilde{M}$ denotes the centralizer of $\widetilde{\mathcal{M}}$.
\end{enumerate}
}
\end{nota} 
Note that canonically, we have the following isomorphism:
\begin{equation}\label{isoampli}
\mathcal{M}^n \cong \mathcal{M}_1^n \mathop{\ast}_{N^n} \mathcal{M}_2^n, \forall n \in \N^*.
\end{equation}

The next theorem can be viewed as a generalization to the almost periodic case of a result by Ioana, Peterson \& Popa. They proved (see Theorem $0.1$ in \cite{ipp}) that any relatively rigid von Neumann subalgebra $\displaystyle{Q \subset (M_1, \tau_1) \mathop{\ast}_B (M_2, \tau_2)}$ can be intertwined into one of the $M_i$'s. We prove a similar result replacing the faithful normal trace $\tau_{1, 2}$ by any almost periodic faithful normal state $\varphi_{1,2}$. The beautiful idea of the proof of Theorem $\ref{embedamalga}$ was given to us by Stefaan Vaes. We gratefully thank him for allowing us to present it here.

\begin{theo}\label{embedamalga}
Let $(\mathcal{M}_1, \varphi_1)$ and $(\mathcal{M}_2, \varphi_2)$ be von Neumann algebras with faithful normal almost periodic states and $N \subset \mathcal{M}_i^{\varphi_i}$ for $i = 1, 2$. Set $\mathcal{M} = \displaystyle{\mathcal{M}_1 \mathop{\ast}_{N} \mathcal{M}_2}$ and $\varphi$ the free product state. We shall freely use Notation $\ref{interpretation2}$. If $Q \subset \mathcal{M}^{\varphi}$ is rigid, then there exists $i \in \{1, 2\}$ such that $\displaystyle{Q \mathop{\prec}_{\mathcal{M}} \mathcal{M}_i}$.
\end{theo}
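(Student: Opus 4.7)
The plan is to mirror the structure of the proof of Theorem \ref{rigidembed}, replacing the crossed-product amalgamation over $L(G)$ by the amalgamation over $N$ that underlies $\widetilde{\mathcal{M}}$. First, I would build a state-preserving continuous one-parameter group of automorphisms $(\alpha_t)$ of $\widetilde{\mathcal{M}}$ together with a period-$2$ automorphism $\beta$ making the deformation s-malleable. Concretely, the canonical generators $u_i$ of the added copies of $L(\Z)$ lie in the centralizer of $\widetilde{\varphi}$ and commute with $N$; writing $u_i = \exp(ih_i)$ with $h_i$ selfadjoint in $N \otimes L(\Z) \subset \widetilde{\mathcal{M}}_i$, set $u_i^t = \exp(ith_i)$ and define $\alpha_t$ on $\widetilde{\mathcal{M}} = \widetilde{\mathcal{M}}_1 \ast_N \widetilde{\mathcal{M}}_2$ by amalgamating $\Ad(u_1^t)$ on $\widetilde{\mathcal{M}}_1$ with $\Ad(u_2^{-t})$ on $\widetilde{\mathcal{M}}_2$ (both restrict to the identity on $N$). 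The involutions $\beta_i$ fixing $\mathcal{M}_i$ pointwise and sending $u_i \mapsto u_i^*$ amalgamate into a period-$2$ automorphism $\beta$ of $\widetilde{\mathcal{M}}$ with $\beta|_{\mathcal{M}} = \id$ and $\beta \alpha_t \beta = \alpha_{-t}$.

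Next, I would exploit rigidity. Since $\mathcal{M}^\varphi \subset \widetilde{\mathcal{M}}^{\widetilde{\varphi}}$ is a trace-preserving inclusion, the rigidity of $Q \subset \mathcal{M}^\varphi$ carries over to $Q \subset \widetilde{\mathcal{M}}^{\widetilde{\varphi}}$. Applying it to the $Q$-$Q$-bimodules $\mathcal{H}_t = L^2(\widetilde{\mathcal{M}}^{\widetilde{\varphi}})$ whose right action is twisted by $\alpha_t$, and which converge to the trivial bimodule as $t \to 0$, yields a nonzero $Q$-central tracial vector in $\mathcal{H}_t$ for some $t = 2^{-s}$. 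Taking its polar decomposition via Proposition \ref{polar} produces a nonzero partial isometry $v \in \widetilde{\mathcal{M}}$, a $\gamma$-eigenvector for $\widetilde{\varphi}$, satisfying $x v = v \alpha_t(x)$ for every $x \in Q$.

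Suppose for contradiction that $Q \mathop{\nprec}_{\mathcal{M}} \mathcal{M}_i$ for both $i = 1,2$. Since $N \subset \mathcal{M}_i$, we automatically have $Q \mathop{\nprec}_{\mathcal{M}} N$. Moreover $Q \mathop{\nprec}_{\widetilde{\mathcal{M}}_i} N$: an embedding inside $\widetilde{\mathcal{M}}_i = \mathcal{M}_i \ast_N (N \otimes L(\Z))$ could be pushed into $\mathcal{M}_i$ by Theorem \ref{amalga}, contradicting the hypothesis. Applying Theorem \ref{amalga} to $\widetilde{\mathcal{M}} = \widetilde{\mathcal{M}}_1 \ast_N \widetilde{\mathcal{M}}_2$ in both directions then gives $\widetilde{\mathcal{M}} \cap Q' \subset \widetilde{\mathcal{M}}_1 \cap \widetilde{\mathcal{M}}_2 = N$, whence in particular $vv^* \in N$. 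Using s-malleability to bootstrap as in Theorem \ref{rigidembed}, set $w = \alpha_t(\beta(v^*) v)$ to double the parameter; after finitely many iterations one obtains a nonzero partial isometry $v_1 \in \widetilde{\mathcal{M}}$ with $x v_1 = v_1 \alpha_1(x)$ for every $x \in Q$. Now $\alpha_1$ sends a reduced word in $\mathcal{M}$ (with alternating letters in $\mathcal{M}_i \ominus N$) to a reduced word with letters in $\widetilde{\mathcal{M}}_i \ominus N$, so Proposition \ref{embed2} produces unitaries $(w_k) \subset Q$ with $\|E_{\widetilde{\mathcal{M}}_i}(a w_k b)\|_2 \to 0$ for all $a, b \in \widetilde{\mathcal{M}}$ and $i = 1,2$, and a free-etymology computation modeled on Claim \ref{esperance2} forces $\|v_1^* v_1\|_2 = 0$, a contradiction.

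The main obstacle is in organizing the last step so as to actually kill $v_1$. One must carefully track the eigenvalues for $\widetilde{\varphi}$ throughout the $\beta$-bootstrap (each iteration can alter the eigenvalue of the intertwiner and requires the inequality \eqref{inequa} to stay under control), verify that $\alpha_1$ sufficiently ``spreads'' $Q$ across the amalgamated free product so that the Claim \ref{esperance2}-type reduced-word estimates apply, and check that the implications $Q \mathop{\nprec}_{\mathcal{M}} \mathcal{M}_i \Rightarrow Q \mathop{\nprec}_{\widetilde{\mathcal{M}}_i} N \Rightarrow Q \mathop{\nprec}_{\widetilde{\mathcal{M}}} \widetilde{\mathcal{M}}_i$ are clean in the almost periodic setting.
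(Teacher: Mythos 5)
Your Steps (0)--(2) follow the paper's route: the same deformation $\alpha_t = (\Ad \exp(ith_1)) \ast (\Ad \exp(ith_2))$ with $h_i = f(u_i)$, the same use of rigidity on the bimodules $\mathcal{H}_t = L^2(\widetilde{M})$ twisted by $\alpha_t$, and the same $\beta$-bootstrap $w = \alpha_t(\beta(v^*)v)$ to reach $t=1$. But your justification that $vv^* \in N$ is broken: the statement ``$Q \mathop{\nprec}_{\widetilde{\mathcal{M}}_i} N$'' is not even well-formed, since $Q \subset \mathcal{M}^\varphi$ and $\mathcal{M} = \mathcal{M}_1 \ast_N \mathcal{M}_2$ is not contained in $\widetilde{\mathcal{M}}_i$; likewise Theorem \ref{amalga} cannot be applied ``in both directions'' to $\widetilde{\mathcal{M}} = \widetilde{\mathcal{M}}_1 \ast_N \widetilde{\mathcal{M}}_2$ because its hypothesis requires $Q$ to sit inside (the centralizer of) one leg, and the conclusion $\widetilde{\mathcal{M}} \cap Q' \subset N$ is false in general (nothing prevents $Q' \cap \mathcal{M}$ from being much larger than $N$). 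What is true, and what the bootstrap actually needs, is only that $vv^*$ is fixed by $\beta$: since $Q \mathop{\nprec}_{\mathcal{M}} N$, Theorem \ref{amalga} applied to the decomposition $\widetilde{\mathcal{M}} = \mathcal{M} \ast_N (N \otimes L(\mathbf{F}_2))$ gives $\widetilde{\mathcal{M}} \cap Q' \subset \mathcal{M}$, hence $vv^* \in M$.

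The genuine gap is in your final step, and it is the heart of the argument. Unlike the crossed-product case of Theorem \ref{rigidembed}, where $\alpha_1$ maps one leg onto the other so that $\alpha_1(x)v^*v \in \mathcal{M}_2$ and $E_{\mathcal{M}_2}$ detects $v^*v$ at full norm, here $\alpha_1 = (\Ad u_1) \ast (\Ad u_2)$ spreads $\mathcal{M}$ across the whole amalgamated free product: $\alpha_1(\mathcal{M}) = u_1\mathcal{M}_1u_1^* \ast_N u_2\mathcal{M}_2u_2^*$ is contained in neither $\widetilde{\mathcal{M}}_1$ nor $\widetilde{\mathcal{M}}_2$. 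Consequently $\|E_{\widetilde{\mathcal{M}}_i}(v_1^* u_k v_1)\|_2 = \|E_{\widetilde{\mathcal{M}}_i}(\alpha_1(u_k) v_1^*v_1)\|_2$ is \emph{not} equal to $\|v_1^*v_1\|_2$ (the expectation already crushes it), so a Claim \ref{esperance2}-type estimate for $E_{\widetilde{\mathcal{M}}_i}$ cannot produce the contradiction; and the upgrade $Q \mathop{\nprec}_{\mathcal{M}} \mathcal{M}_i \Rightarrow Q \mathop{\nprec}_{\widetilde{\mathcal{M}}} \widetilde{\mathcal{M}}_i$ you invoke for Proposition \ref{embed2} is itself unproven (and unnecessary). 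The missing idea, which the paper supplies, is to replace the conditional expectations by the orthogonal projection $P_{\alt}$ onto the subspace $H_{\alt}$ spanned by $N$ and the reduced words in which letters from $\mathcal{M}_1 \ominus N$ and $\mathcal{M}_2 \ominus N$ are always separated by a letter from $N \otimes (L(\mathbf{F}_2) \ominus \C 1)$. One has $\alpha_1(\mathcal{M}) \subset H_{\alt}$, so $\|P_{\alt}(v_1^* u_k v_1)\|_2 = \|\alpha_1(u_k)v_1^*v_1\|_2 = \|v_1^*v_1\|_2$, while the vanishing $\|P_{\alt}(c u_k d)\|_2 \to 0$ (Claim \ref{esperance3}) is deduced from Proposition \ref{embed2} applied \emph{inside} $\mathcal{M}$ to $E_{\mathcal{M}_1}$ and $E_{\mathcal{M}_2}$, via the elementary estimate $\|P_1(z)\|_2^2 \le \|E_{\mathcal{M}_1}(z)\|_2^2 + \|E_{\mathcal{M}_2}(z)\|_2^2$ for the projection $P_1 = E_{\mathcal{M}_1} + E_{\mathcal{M}_2} - E_N$ acting on the middle letter, together with the eigenvector trick controlling the unbounded right multiplications. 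Without this (or an equivalent device), the contradiction you aim for in the last paragraph does not go through.
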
 
 
\begin{proof}
We assume that $\displaystyle{Q \mathop{\nprec}_{\mathcal{M}} \mathcal{M}_1}$ and $\displaystyle{Q \mathop{\nprec}_{\mathcal{M}} \mathcal{M}_2}$; we shall obtain a contradiction.

{\bf Step (0): Defining the deformation property.}
Consider $L(\mathbf{F}_2) = L(\Z) \ast L(\Z)$ with its canonical unitaries $u_1$ and $u_2$. Let $f : \mathbf{S}^1 \to ]-\pi, \pi]$ be the Borel function satisfying $\exp(if(z)) = z$ for all $z \in \mathbf{S}^1$. Define the self-adjoint elements $h_i = f(u_i)$ for $i = 1, 2$. Regarding $\widetilde{\mathcal{M}} = \displaystyle{\widetilde{\mathcal{M}}_1 \mathop{\ast}_{N} \widetilde{\mathcal{M}}_2}$, define the one-parameter group of automorphisms $(\alpha_t)$ on $\widetilde{\mathcal{M}}$ by:
\begin{equation*}
\alpha_t = (\Ad \exp(ith_1)) \ast (\Ad \exp(ith_2)).
\end{equation*}
Note that $\alpha_1 = (\Ad u_1) \ast (\Ad u_2)$. Define now the period $2$ automorphism $\beta$ on $L(\mathbf{F}_2)$ by:
\begin{equation*}
\beta(u_1) = u_1^*, \beta(u_2) = u_2^*.
\end{equation*}
Regarding $\widetilde{\mathcal{M}} = \displaystyle{\mathcal{M} \mathop{\ast}_{N} (N \otimes L(\mathbf{F}_2))}$, extend $\beta$ to $\widetilde{\mathcal{M}}$ using the identity automorphism on $\mathcal{M}$. We know from Lemma $2.2.2$ in \cite{ipp} that $\beta\alpha_t = \alpha_{-t}\beta$ for every $t \in \R$. Thus this deformation if of {\it malleable} type as in Definition $\ref{malleable}$.

{\bf Step (1): Using the relative property $(T)$.}
Recall that $\widetilde{M}$ denotes the centralizer of $\widetilde{\mathcal{M}}$ and $M$ the centralizer of $\mathcal{M}$. For every $t \in \R$, define $\mathcal{H}_t = L^2(\widetilde{M})$ the following $Q$-$Q$ bimodule:
\begin{eqnarray*}
x \cdot \xi & = & x\xi, \\
\xi \cdot x & = & \xi\alpha_t(x), \forall x \in Q, \forall \xi \in L^2(\widetilde{M}).
\end{eqnarray*}
Since the action $(\alpha_t)$ is continuous, $\mathcal{H}_t \to \mathcal{H}_0$ as $t \to 0$, in the sense of correspondences. Thus the relative property $(T)$ (and Proposition \ref{polar}) yields $t = 2^{-s}$, $s \in \N$ and a nonzero partial isometry $v \in \widetilde{M}$ satisfying
\begin{equation}\label{equamal2}
xv = v\alpha_t(x), \forall x \in Q.
\end{equation}

{\bf Step (2): Going till $t = 1$ using the deformation property.}
We shall find a nonzero partial isometry in $\widetilde{M}$ satisfying Equation $(\ref{equamal2})$ for $t  = 1$. As in the proof of Theorem $\ref{rigidembed}$, in order to do so, it suffices to prove the existence of a nonzero partial isometry $w \in \widetilde{M}$ satisfying $xw = w\alpha_{2t}(x)$ for all $x \in Q$. Indeed, iterating the procedure then allows to continue till $t = 1$.

Since $\displaystyle{Q \mathop{\nprec}_{\mathcal{M}} \mathcal{M}_1}$, certainly $\displaystyle{Q \mathop{\nprec}_{\mathcal{M}} N}$. Regarding $\widetilde{\mathcal{M}} = \displaystyle{\mathcal{M} \mathop{\ast}_{N} (N \otimes L(\mathbf{F}_2))}$, Theorem $\ref{amalga}$ implies that $\widetilde{\mathcal{M}} \cap Q' \subset \mathcal{M}$ and therefore $\widetilde{M} \cap Q' \subset M$. From Equation $(\ref{equamal2})$, we get $vv^* \in \widetilde{M} \cap Q'$, thus $vv^* \in M$. We write $vv^* = p$ with $p \in M$. Using the properties of $\beta$ (in particular, $\beta(x) = x$ for all $x \in \mathcal{M}$), one checks, as in the proof of Theorem $\ref{rigidembed}$, that $w := \alpha_t(\beta(v^*)v)$ is a nonzero partial isometry satisfying $xw = w\alpha_{2t}(x)$ for all $x \in Q$.

{\bf Step (3): Using the amalgamation over $N$ to obtain the contradiction.} We write $\theta = \alpha_1$. We have found a nonzero partial isometry $v \in \widetilde{M}$ satisfying $xv = v\theta(x)$ for all $x \in Q$. Since $\displaystyle{Q \mathop{\nprec}_{\mathcal{M}} \mathcal{M}_1}$, certainly $\displaystyle{\theta(Q) \mathop{\nprec}_{\theta(\mathcal{M})} \theta(\mathcal{M}_1)}$, and thus $\displaystyle{\theta(Q) \mathop{\nprec}_{\theta(\mathcal{M})} N}$. Regarding $\widetilde{\mathcal{M}} = \displaystyle{\mathcal{M} \mathop{\ast}_{N} (N \otimes L(\mathbf{F}_2))}$, Theorem $\ref{amalga}$ implies that $\widetilde{\mathcal{M}} \cap \theta(Q)' \subset \theta(\mathcal{M})$ and therefore $\widetilde{M} \cap \theta(Q)' \subset \theta(M)$. Since $v^*v \in \widetilde{M} \cap \theta(Q)'$, we get $v^*v \in \theta(M)$.

Set $A = L(\F_2)$ and define the subspace $H_{\alt} \subset L^2(\widetilde{\mathcal{M}})$ as the closed linear span of $N$ and the words in $\displaystyle{\mathcal{M}_1 \mathop{\ast}_{N} \mathcal{M}_2 \mathop{\ast}_N (N \otimes A)}$ with letters alternatingly from $\mathcal{M}_1 \ominus N$, $\mathcal{M}_2 \ominus N$, $N \otimes (A \ominus \C1)$ and such that two consecutive letters never come from $\mathcal{M}_1 \ominus N$, $\mathcal{M}_2 \ominus N$. This means that letters from $\mathcal{M}_1 \ominus N$ and $\mathcal{M}_2 \ominus N$ are always separated by a letter from $N \otimes (A \ominus \C1)$.

By the definition of $\theta$, it follows that $\theta(\mathcal{M}) \subset H_{\alt}$. Denote by $P_{\alt}$ the orthogonal projection of $L^2(\widetilde{\mathcal{M}})$ onto $H_{\alt}$. Since $\displaystyle{Q \mathop{\nprec}_{\mathcal{M}} \mathcal{M}_1}$ and $\displaystyle{Q \mathop{\nprec}_{\mathcal{M}} \mathcal{M}_2}$, thanks to Proposition $\ref{embed2}$, we know that there exists a sequence of unitaries $(u_k)$ in $Q$ such that $\|E_{\mathcal{M}_i}(x u_k y)\|_2 \to 0$ for all $x, y \in \mathcal{M}$ and all $i \in \{1, 2\}$. Moreover, we have the following:

\begin{claim}\label{esperance3}
$\forall c, d \in \widetilde{\mathcal{M}}$, $\| P_{\alt}(c u_k d) \|_2 \to 0$.
\end{claim}

\begin{proof}[Proof of Claim $\ref{esperance3}$]
Let $c, d \in \widetilde{\mathcal{M}} = \displaystyle{\mathcal{M} \mathop{\ast}_{N} (N \otimes A)}$ be either in $N$ or reduced words with letters alternatingly from $\mathcal{M} \ominus N$ and $N \otimes (A \ominus \C1)$. As we did in the proof of Claim $\ref{esperance}$, we assume that all the letters of $d$ are eigenvectors for $\varphi$. Set $c = c'a$, with $a = c$ if $c \in N$, $a = 1$ if $c$ ends with a letter from $N \otimes (A \ominus \C1)$ and $a$ equal to the last letter of $c$ otherwise. Note that either $c'$ is equal to $1$ or is a reduced word ending with a letter from $N \otimes (A \ominus \C1)$. Exactly in the same way, set $d = bd'$, with $b = d$ if $d \in N$, $b = 1$ if $d$ begins with a letter from $N \otimes (A \ominus \C1)$ and $b$ equal to the first letter of $d$ otherwise. For $x \in \mathcal{M}$, write $cxd = c' (axb) d'$, and note that $axb \in \mathcal{M}$. Note that either $d'$ is equal to $1$ or is a reduced word beginning with a letter from $N \otimes (A \ominus \C1)$. Since $\mathcal{M} = \displaystyle{\mathcal{M}_1 \mathop{\ast}_N \mathcal{M}_2}$, recall that
\begin{equation*}
\mathcal{M} = \overline{\span}^w \{ N, \mathcal{M}_1 \ominus N, \mathcal{M}_2 \ominus N,  (\mathcal{M}_{i_1} \ominus N) \cdots (\mathcal{M}_{i_n} \ominus N); n \geq 2, i_1 \neq \cdots \neq i_n \in \{1, 2\} \}.
\end{equation*}
By definition of the projection $P_{\alt}$, it is clear that 
\begin{equation}\label{estimee1}
P_{\alt}(c' z d') = 0, \forall z \in \overline{\span}^w \{ (\mathcal{M}_{i_1} \ominus N) \cdots (\mathcal{M}_{i_n} \ominus N); n \geq 2, i_1 \neq \cdots \neq i_n \in \{1, 2\} \}.
\end{equation}
Denote by $P_1$ the orthogonal projection of $L^2(\mathcal{M})$ onto the space $L^2(N) \oplus L^2(\mathcal{M}_1 \ominus N) \oplus L^2(\mathcal{M}_2 \ominus N)$. By definition of the conditional expectations $E_{\mathcal{M}_i}$ $(i = 1, 2)$, it is easy to see that
\begin{eqnarray}\label{estimee2}
P_1(z) & = & E_{\mathcal{M}_1}(z) + E_{\mathcal{M}_2}(z) - E_N(z), \\ \nonumber
\|P_1(z)\|_2^2  & \leq & \|E_{\mathcal{M}_1}(z)\|_2^2 + \|E_{\mathcal{M}_2}(z)\|_2^2, \forall z \in \mathcal{M}.
\end{eqnarray}
We recall that all the letters of $d$ are assumed to be eigenvectors for $\varphi$. Thus, there exists $\lambda > 0$ such that $d'$ is a $\lambda$-eigenvector. Thanks to $(\ref{estimee1})$ and $(\ref{estimee2})$, we get for any $z \in Q$
\begin{eqnarray}\label{estimeeproj}
\|P_{\alt}(c z d)\|_2^2 & = & \|P_{\alt}(c' P_1(azb) d')\|_2^2\\ \nonumber
& \leq & \|c' P_1(azb) d'\|_2^2\\ \nonumber
& \leq & \lambda^{-1} \|c'\|^2 \|d'\|^2 \|P_1(azb)\|_2^2\\ \nonumber
& \leq & \lambda^{-1} \|c'\|^2 \|d'\|^2 \left( \|E_{\mathcal{M}_1}(azb)\|_2^2 + \|E_{\mathcal{M}_2}(azb)\|_2^2 \right).  
\end{eqnarray}

Since $\|E_{\mathcal{M}_i}(x u_k y)\|_2 \to 0$ for all $x, y \in \mathcal{M}$ and all $i \in \{1, 2\}$, the inequality $(\ref{estimeeproj})$ implies that $\|P_{\alt}(c u_k d)\|_2 \to 0$ for $c, d$ chosen as before. We can now proceed exactly the way we did in the proof of Claim $\ref{esperance}$, in order to obtain that $\|P_{\alt}(c u_k d)\|_2 \to 0$, for every $c, d \in \widetilde{\mathcal{M}}$. 
\end{proof}

At last, for any $x \in Q$, $v^*xv = \theta(x) v^*v \in \theta(M) \subset H_{\alt}$. So, since $\theta(u_k) \in \mathcal{U}(M)$, we get
\begin{equation*}
\|v^*v\|_2 = \| \theta(u_k)v^*v \|_2 = \| P_{\alt}(\theta(u_k)v^*v) \|_2 = \| P_{\alt}(v^*u_k v) \|_2 \to 0.
\end{equation*}
It follows that $v = 0$, which is a contradiction.
\end{proof}

\subsection{Fundamental Groups of Type ${\rm II_1}$ Factors}

We denote by $\mathcal{F}(M) \subset \R^*_+$ the fundamental group of a type ${\rm II_1}$ factor $M$, and by $\Sp(\mathcal{N}, \varphi) \subset \R^*_+$ the point spectrum of the modular operator $\Delta_\varphi$ of an almost periodic state $\varphi$ on $\mathcal{N}$. We shall denote by $\Gamma_{\Sp(\mathcal{N}, \varphi)} \subset \R^*_+$ the subgroup generated by $\Sp(\mathcal{N}, \varphi)$. Note that if the centralizer $\mathcal{N}^\varphi$ is a factor, then $\Sp(\mathcal{N}, \varphi)$ is a multiplicative subgroup and then $\Gamma_{\Sp(\mathcal{N}, \varphi)} = \Sp(\mathcal{N}, \varphi)$ (see \cite{dykema96}). As a consequence of Theorem $\ref{rigidembed}$, we obtain the following result.

\begin{theo}\label{rigidaraki}
Let $G$ be an ICC $w$-rigid group. Let $\sigma : G \to \Aut(\mathcal{N}, \varphi)$ be a state-preserving s-malleable (freely) mixing action with $\varphi$ an almost periodic state. Assume that the centralizer $\mathcal{N}^\varphi$ is a factor. Denote by $M$ the crossed product $\mathcal{N}^\varphi \rtimes G$  as in Notation $\ref{interpretation}$. Then, $M$ is a type ${\rm II_1}$ factor and one has
\begin{equation*}
\Sp(\mathcal{N}, \varphi) \subset \mathcal{F}(M) \subset \Sp(\mathcal{N}, \varphi) \mathcal{F}(L(G)).
\end{equation*}
In particular, if $\mathcal{F}(L(G)) = \{1\}$, then $\mathcal{F}(M) = \Sp(\mathcal{N}, \varphi)$.
\end{theo}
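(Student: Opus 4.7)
The plan is to establish three statements: (a) $M$ is a type ${\rm II_1}$ factor, (b) $\Sp(\mathcal{N},\varphi) \subset \mathcal{F}(M)$, and (c) $\mathcal{F}(M) \subset \Sp(\mathcal{N},\varphi) \cdot \mathcal{F}(L(G))$. For (a), since $\sigma$ is $\varphi$-preserving it commutes with the modular group $\sigma^\varphi$, so it restricts to a trace-preserving action of $G$ on the factor $\mathcal{N}^\varphi$. This restricted action is mixing, because the free mixing of $\sigma$ on $\mathcal{N}$ specializes to ordinary mixing on the centralizer. A mixing trace-preserving action of an ICC group on a ${\rm II_1}$ factor produces a ${\rm II_1}$ factor crossed product by standard arguments.

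For (b), given $\gamma \in \Sp(\mathcal{N},\varphi)$, pick any nonzero $x \in \mathcal{N}^\gamma$ and form its polar decomposition $x = w|x|$. Then $w \in \mathcal{N}^\gamma$ is a partial isometry whose source $q_1 = w^*w$ and range $q_2 = ww^*$ both lie in $\mathcal{N}^\varphi$, with $\tau(q_2) = \gamma \tau(q_1)$. Conjugation $y \mapsto wyw^*$ defines a $*$-isomorphism $q_1 M q_1 \to q_2 M q_2$: for a generator $q_1 a u_g q_1$ with $a \in \mathcal{N}^\varphi$, one computes $w(q_1 a u_g q_1)w^* = (wa\sigma_g(w^*))u_g$, which lies in $\mathcal{N}^\varphi \rtimes G = M$ because $\sigma_g$ preserves the eigenspaces, so $wa\sigma_g(w^*) \in \mathcal{N}^\gamma \mathcal{N}^\varphi \mathcal{N}^{\gamma^{-1}} \subset \mathcal{N}^\varphi$. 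Hence $\gamma = \tau(q_2)/\tau(q_1) \in \mathcal{F}(M)$.

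For (c), take $t \in \mathcal{F}(M)$, and pick a projection $p \in M$ with $\tau(p) = t \leq 1$ together with an isomorphism $\pi : M \to pMp$. Let $H \trianglelefteq G$ be the infinite normal subgroup for which $(G,H)$ has relative property $(T)$. Then $Q := \pi(L(H)) \subset M$ is diffuse and rigid (rigidity of $L(H) \subset L(G)$ transfers through $\pi$ and through the compression $pMp \subset M$). Since $H$ is normal in $G$, $L(G)$ quasi-normalizes $L(H)$ inside $M$, so $\pi(L(G))$ is contained in the quasi-normalizer $P$ of $Q$ inside $M$. Theorem~\ref{rigidembed} now supplies $\gamma \in \Sp(\mathcal{N},\varphi)$, an integer $n \geq 1$, and a nonzero $\gamma$-eigenvector partial isometry $v \in M_{1,n}(\C) \otimes \mathcal{M}$ satisfying $vv^* \in P \cap Q'$, $v^*v \in L(G)^n$, and $v^*\pi(L(G))v \subset v^*v (M_n(\C) \otimes L(G)) v^*v$. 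The $\gamma$-eigenvector relation yields the key identity $\tau(vv^*) = \gamma\,(\tr_n \otimes \tau)(v^*v)$.

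The main obstacle is promoting the set inclusion $v^*\pi(L(G))v \subset v^*v L(G)^n v^*v$ into a genuine isomorphism between a corner of $\pi(L(G)) \cong L(G)$ and the corner $v^*v L(G)^n v^*v$. The crucial point is that $\pi(L(G))$ is itself the quasi-normalizer of $Q$ inside $pMp$, so the transported family $v^*\pi(L(G))v$ generates a subalgebra whose own quasi-normalizer inside $v^*v L(G)^n v^*v$ must still fill the target corner; combined with the polar decomposition machinery of Proposition~\ref{polar} that turns $v$ into a genuine partial isometry between the appropriate compressions, one identifies a corner of $L(G)$ (inherited from $\pi(L(G)) \subset pMp$, and hence rescaled by $1/t$ relative to $L(G) \subset M$) with the corner $v^*v L(G)^n v^*v$ (of trace $(\tr_n \otimes \tau)(v^*v) = \tau(vv^*)/\gamma$). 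This exhibits two compressions of $L(G)$ whose trace ratio equals $t/\gamma$, so $t/\gamma \in \mathcal{F}(L(G))$ and therefore $t \in \Sp(\mathcal{N},\varphi) \cdot \mathcal{F}(L(G))$. The final equality $\mathcal{F}(M) = \Sp(\mathcal{N},\varphi)$ under the hypothesis $\mathcal{F}(L(G)) = \{1\}$ is then immediate from (b) and (c).
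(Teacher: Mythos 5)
Your steps (a) and (b) and the first half of (c) are essentially the paper's argument: factoriality from ICC plus factoriality of $\mathcal{N}^\varphi$, the inclusion $\Sp(\mathcal{N},\varphi)\subset\mathcal{F}(M)$ via the partial isometry in the polar decomposition of an eigenvector, and the reduction of $t\in\mathcal{F}(M)$ to a rigid, diffuse $Q$ with quasi-normalizer $P\cong L(G)$ to which Theorem~\ref{rigidembed} is applied. (Two small remarks: with your normalization $\tau(vv^*)=\gamma\, n\,(\tr_n\otimes\tau)(v^*v)$, and Theorem~\ref{rigidembed} is stated for a unital $Q\subset M$, which is why the paper amplifies to $t\ge 1$ and then compresses by a projection $q\in Q$ of trace $1/t$ so that the compressed inclusion sits unitally in a copy of $M$; your non-unital $\pi(L(H))\subset pMp\subset M$ needs the same rearrangement.)

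The genuine gap is exactly at the place you call ``the main obstacle.'' From Theorem~\ref{rigidembed} you only get the one-sided inclusion $v^*Pv\subset v^*v\,(M_n(\C)\otimes L(G))\,v^*v$, i.e.\ an embedding of (a corner of) $L(G)$ into a corner of an amplification of $L(G)$. By itself this gives no element of $\mathcal{F}(L(G))$: you need the inclusion to be an \emph{equality} (after patching $v$ so that $vv^*=1_P$, using that $P$ is a factor). Your proposed justification --- that $v^*\pi(L(G))v$ is quasi-regular in the target corner and therefore ``must fill'' it --- is both unproved and false as a principle: a quasi-regular subalgebra can be proper (e.g.\ $L(H)\subset L(G)$ for $H$ almost normal of infinite index), and Proposition~\ref{polar} plays no role here since $v$ is already a partial isometry. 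The paper closes this gap with a second, nontrivial round of deformation/rigidity: setting $Q_1=\theta^{-1}\bigl(v(M_n(\C)\otimes L(H))v^*\bigr)$ and $P_1=\theta^{-1}\bigl(v(M_n(\C)\otimes L(G))v^*\bigr)\supset L(G)$, the inclusion $Q_1\subset P_1$ is again rigid with $P_1$ the quasi-normalizer of $Q_1$, so Theorem~\ref{rigidembed} applies once more and yields a $\lambda$-eigenvector partial isometry $w$ with $ww^*=1$ and $P_1w\subset w\,L(G)^{1/\lambda}$; then $L(G)w\subset wL(G)^{1/\lambda}$, and the mixing property, through Popa's bimodule theorem (Theorem $3.1$ in \cite{popamal1}: any $L(G)$-$L(G)$ subbimodule of $L^2(\mathcal{M})$ finitely generated as a right $L(G)$-module lies in $L^2(L(G))$), forces $w\in M_{1,k}(\C)\otimes L(G)$, hence $P_1\subset L(G)$, hence $P_1=L(G)$ and $v^*Pv=L(G)^{t/\gamma}$, which is what actually gives $t/\gamma\in\mathcal{F}(L(G))$. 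Without this (or an equivalent) bootstrapping step, your argument does not establish the upper bound $\mathcal{F}(M)\subset\Sp(\mathcal{N},\varphi)\,\mathcal{F}(L(G))$.
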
 
 
\begin{proof}
We refer to the proof of Theorem $5.2$ and Corollary $5.4$ in \cite{popamal1} (see also Theorem $7.1$ in \cite{vaesbern}). The arguments are exactly the same. However, we shall give the proof for the sake of completeness. We should mention here that \emph{controlling} quasi-normalizers will rely on a result of Popa (Theorem $3.1$ in \cite{popamal1}). We remind that Theorem $3.1$ in \cite{popamal1} uses in a crucial way the mixing property of the action. Denote as in Notation \ref{interpretation}, $\mathcal{N} \rtimes G$ by $\mathcal{M}$ and $\mathcal{N}^\varphi \rtimes G$ by $M$. Since the group $G$ is ICC and $\mathcal{N}^\varphi$ is a factor, $M = \mathcal{N}^\varphi \rtimes G$ is necessarily a type ${\rm II_1}$ factor. Note that $\Sp(\mathcal{M}, \varphi) = \Sp(\mathcal{N}, \varphi) \subset \R^*_+$ is a multiplicative subgroup. 

It was shown in \cite{golodets} that the inclusion $\Sp(\mathcal{N}, \varphi) \subset \mathcal{F}(M)$ holds. Indeed, take $\gamma \in \Sp(\mathcal{N}, \varphi)$ and $v$ a nonzero partial isometry in $\mathcal{N}^\gamma$. Write $p = v^*v$ and $q = vv^*$. Then, $p, q \in \mathcal{N}^\varphi \subset M$, $\varphi(q) = \gamma\varphi(p)$, and $\Ad(v)$ yields a $\ast$-isomorphism between $pMp$ and $qMq$. Therefore, $\gamma \in \mathcal{F}(M)$.

Conversely, assume that $t \in \mathcal{F}(M)$ and let $\theta : M \to M^t$ be a $\ast$-isomorphism. We assume that $t \geq 1$. Realize $M^t := p(M_n(\C) \otimes M)p$. Let $H \subset G$ be an infinite normal subgroup with the relative property $(T)$. Since $H \subset G$ is normal, it is clear that $L(G)$ is contained in the quasi-normalizer of $L(H)$ inside $M$. Moreover since $L(H)$ is diffuse, Theorem $3.1$ of \cite{popamal1} implies that the quasi-normalizer of $L(H)$ inside $M$ is exactly $L(G)$. Write $Q = \theta(L(H))$ and $P = \theta(L(G))$. The inclusion $Q \subset P$ still has the relative property $(T)$ and $P$ is the quasi-normalizer of $Q$ inside $M^t$. Since $s = 1/t \leq 1$, choose a projection $q \in Q$ with trace $s$. Write $Q^s := q Q q$ and $P^s := q P q$. We regard $Q^s \subset P^s \subset M$. The inclusion $Q^s \subset P^s$ has the relative property $(T)$, $Q^s$ is diffuse and $P^s$ is the quasi-normalizer of $Q^s$ inside $M$.

We can apply Theorem $\ref{rigidembed}$ in order to obtain $\gamma > 0$ and a nonzero partial isometry $w \in M_{1, r}(\C) \otimes \mathcal{M}$ which is a $\gamma$-eigenvector for $\varphi$ and such that $w^*w \in M_r(\C) \otimes L(G)$, $ww^* \in M \cap (Q^s)' \subset P^s$ and 
\begin{equation*}
w^* Q^s w \subset w^* P^s w \subset w^*w (M_r(\C) \otimes L(G)) w^*w.
\end{equation*}
Since $P$ is a factor, we can find partial isometries $x_1, \dots, x_m \in P$ such that $x_i^* x_i \leq ww^*$, for every $i \in \{1, \dots, m\}$ and $\sum_i x_i x_i^* = 1_P$. Let $k = mr$, and write $v = [x_1 w \cdots x_m w]$. Since $P \subset M^t = p(M_n(\C) \otimes M)p$, we can regard $v \in M_{n, k}(\C) \otimes \mathcal{M}$, and $v$ is a $\gamma$-eigenvector for $\varphi$. Moreover, we have  
\begin{equation}\label{inclusion1}
v^* Q v \subset v^* P v \subset L(G)^{t/\gamma},
\end{equation}
with $vv^* = p$, $v^*v := q \in M_k(\C) \otimes L(G)$, and $L(G)^{t/\gamma} := q (M_k(\C) \otimes L(G)) q$. Note that increasing $n$ or $k$ if necessary, we may assume $k = n$. We want to prove that in fact, $v^* P v = q (M_n(\C) \otimes L(G)) q = L(G)^{t/\gamma}$. If we do so, we are done. Indeed, we have $L(G) \simeq L(G)^{t/\gamma}$, and so $t/\gamma \in \mathcal{F}(L(G))$. Consequently, $t \in \Sp(\mathcal{N}, \varphi)\mathcal{F}(L(G))$.

Changing $q$ to an equivalent projection in $M_n(\C) \otimes L(H)$, we may assume that $q \in M_n(\C) \otimes L(H)$. Define
\begin{equation*}
Q_1 := \theta^{-1}(v (M_n(\C) \otimes L(H)) v^*) \mbox{ and } P_1 := \theta^{-1}(v (M_n(\C) \otimes L(G)) v^*).
\end{equation*}
The inclusion $Q_1 \subset P_1$ has the relative property $(T)$, $P_1$ is the quasi-normalizer of $Q_1$ inside $M$, and Equation $(\ref{inclusion1})$ yields $L(G) \subset P_1$. We want to prove that $P_1 \subset L(G)$. Once again using Theorem $\ref{rigidembed}$, we get that there exist $k \geq 1$, $\lambda > 0$ and a nonzero partial isometry $w \in M_{1, k}(\C) \otimes \mathcal{M}^\lambda$, such that $ww^* = 1$, $w^*w \in M_k(\C) \otimes L(G)$ and $w^* P_1 w \subset L(G)^{1/\lambda}$, where we have realized $L(G)^{1/\lambda} := w^*w (M_k(\C) \otimes L(G)) w^*w$. Since $ww^* = 1$, we have $P_1 w \subset w L(G)^{1/\lambda}$.  Since $L(G) \subset P_1$, it follows that  $L(G) w \subset w L(G)^{1/\lambda}$. From Theorem $3.1$ in \cite{popamal1}, since $L(G)$ is diffuse, we know that any $L(G)$-$L(G)$ subbimodule $\mathcal{H}$ of $L^2(\mathcal{M})$ such that $\dim(\mathcal{H}_{L(G)}) < \infty$ (as a right $L(G)$-module) is contained in $L^2(L(G))$. In particular, this implies that $w \in M_{1, k}(\C) \otimes L(G)$. Since $P_1 \subset w (L(G)^{1/\lambda}) w^*$, we get $P_1 \subset L(G)$. We are done.
\end{proof} 
 
 As a corollary, we obtain the result we mentioned in the introduction.

\begin{cor}
Let $G$ be an ICC $w$-rigid group such that $\mathcal{F}(L(G)) = \{1\}$. Let $\Gamma \subset \R^*_+$ be a countable subgroup. Set $(\mathcal{N}, \varphi) = (T_\Gamma, \varphi_\Gamma)$ the unique almost periodic free Araki-Woods factor whose $\Sd$ invariant equals $\Gamma$. Assume that $G$ acts on $(\mathcal{N}, \varphi)$ by free Bogoliubov shifts w.r.t. to the left regular representation $\lambda_G$ (see Section $\ref{exa}$). Write $M = \mathcal{N}^\varphi \rtimes G$. Then $M$ is a type ${\rm II_1}$ factor and $\mathcal{F}(M) = \Gamma$.
\end{cor}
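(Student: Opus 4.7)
The plan is to assemble the preceding machinery: this corollary is essentially a direct specialization of Theorem \ref{rigidaraki} to a concrete action, so the work consists in verifying the hypotheses of that theorem.

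First I would check that the free Bogoliubov shift $\sigma : G \to \Aut(\mathcal{N},\varphi)$ associated with $\pi = \lambda_G$ satisfies all the structural hypotheses needed. By construction, $(\mathcal{N},\varphi) = (T_\Gamma, \varphi_\Gamma)$ is an almost periodic free Araki--Woods factor, so $\varphi$ is almost periodic. By Theorem \ref{pparaki}, the centralizer $\mathcal{N}^\varphi$ is isomorphic to $L(\F_\infty)$, hence a (type ${\rm II}_1$) factor. Also by Theorem \ref{pparaki}, $\Sp(\mathcal{N},\varphi)$ coincides with $\Sd(\mathcal{N}) = \Gamma$. At this point every data of Theorem \ref{rigidaraki} except the s-malleability and free mixing of $\sigma$ is in place.

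Next I would invoke the results of Section \ref{exa}. Proposition \ref{mallea} gives the s-malleability of any free Bogoliubov shift, regardless of which representation $\pi$ is chosen. For the free mixing property, the second proposition of that subsection shows that free mixing is equivalent to $\pi$ being a $C_0$ representation. Since $G$ is ICC, in particular infinite, the left regular representation $\lambda_G$ is $C_0$; hence $\sigma$ is freely mixing. Combined with $G$ being ICC and $w$-rigid, all hypotheses of Theorem \ref{rigidaraki} are satisfied.

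Applying Theorem \ref{rigidaraki} then yields that $M = \mathcal{N}^\varphi \rtimes G$ is a type ${\rm II}_1$ factor and
\begin{equation*}
\Sp(\mathcal{N},\varphi) \;\subset\; \mathcal{F}(M) \;\subset\; \Sp(\mathcal{N},\varphi)\,\mathcal{F}(L(G)).
\end{equation*}
Using $\Sp(\mathcal{N},\varphi) = \Gamma$ and the hypothesis $\mathcal{F}(L(G)) = \{1\}$, both inclusions collapse to $\mathcal{F}(M) = \Gamma$. There is no real obstacle here: the entire content of the corollary has been offloaded onto Theorem \ref{rigidaraki} and the examples worked out in Section \ref{exa}; the only thing to do is to check that the concrete data fits the abstract hypotheses, which is immediate once one notes that $\lambda_G$ is $C_0$ for infinite $G$.
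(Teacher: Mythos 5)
Your proposal is correct and is exactly the argument the paper intends: the corollary is stated as a direct specialization of Theorem \ref{rigidaraki}, with the hypotheses supplied by Theorem \ref{pparaki} (centralizer $L(\F_\infty)$, $\Sp(\mathcal{N},\varphi)=\Gamma$) and the propositions of Section \ref{exa} (s-malleability, and free mixing since $\lambda_G$ is $C_0$ for $G$ infinite). Nothing is missing.
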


We prove at last the second result we mentioned in the introduction. A type ${\rm II_1}$ factor $N$ is said to be $w$-\emph{rigid} if it contains a diffuse von Neumann subalgebra $B$ such that the inclusion $B \subset N$ is quasi-regular and has the relative property $(T)$. Of course, if $G$ is an ICC $w$-rigid group, $L(G)$ is a $w$-rigid type ${\rm II_1}$ factor.

\begin{theo}\label{generalresult}
Let $N$ be a $w$-rigid type ${\rm II_1}$ factor such that $\mathcal{F}(N) = \{1\}$. Let $(\mathcal{A}, \psi)$ be a von Neumann algebra endowed with an almost periodic state. Assume that the centralizer $\mathcal{A}^\psi$ has the Haagerup property. Write $M = (N \ast \mathcal{A})^{\tau \ast \psi}$. Then $M$ is a type ${\rm II_1}$ factor and $\mathcal{F}(M) = \Gamma_{\Sp(\mathcal{A}, \psi)}$.
\end{theo}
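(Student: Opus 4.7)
The plan is to imitate the deformation/rigidity argument of Theorem \ref{rigidaraki}, replacing the crossed product by the free product $\mathcal{N} := N \ast \mathcal{A}$, the free-malleable action by the free product deformation constructed in the proof of Theorem \ref{embedamalga}, and the mixing used to control quasi-normalizers by Theorem \ref{amalga}. Factoriality of $M$ is known, and follows from Dykema-type results together with the fact that the diffuse ${\rm II_1}$ factor $N$ sits inside $M = \mathcal{N}^{\tau \ast \psi}$. The easy inclusion $\Gamma_{\Sp(\mathcal{A},\psi)} \subset \mathcal{F}(M)$ is proved exactly as in Theorem \ref{rigidaraki}: for $\gamma \in \Sp(\mathcal{A},\psi)$ pick a nonzero partial isometry $v \in \mathcal{A}^\gamma$; the projections $p := v^*v$ and $q := vv^*$ lie in $\mathcal{A}^\psi \subset M$, satisfy $(\tau \ast \psi)(q) = \gamma(\tau \ast \psi)(p)$, and $\Ad(v)$ provides an isomorphism $pMp \simeq qMq$, so $\gamma \in \mathcal{F}(M)$; since $\mathcal{F}(M)$ is a subgroup of $\R^*_+$, this gives $\Gamma_{\Sp(\mathcal{A},\psi)} \subset \mathcal{F}(M)$.

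For the reverse inclusion, pick $t \in \mathcal{F}(M)$ and an isomorphism $\theta : M \to M^t$, realizing $M^t$ as a corner of $M_n(\C) \otimes M$. Let $B \subset N$ be diffuse with $B \subset N$ quasi-regular and rigid, witnessing the $w$-rigidity of $N$. The first step is to identify $N$ as the quasi-normalizer of $B$ inside $M$: on one hand $B \subset N \subset M$ with $B$ quasi-regular in $N$, so $N$ is contained in this quasi-normalizer; on the other hand Theorem \ref{amalga} applied to $\mathcal{N} = N \mathop{\ast}_\C \mathcal{A}$ (for the diffuse algebra $B$, which obviously satisfies $B \mathop{\nprec}_N \C$) forces the quasi-normalizer of $B$ inside $\mathcal{N}$ to be contained in $N$, and intersecting with $M$ completes the claim. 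Transporting via $\theta$ and compressing by a projection of suitable trace inside $\theta(B)$, one obtains a diffuse rigid subalgebra $Q^s \subset M$, together with its quasi-normalizer $P^s$ inside $M$, which is an amplification of $N$ and hence still satisfies $\mathcal{F}(P^s) = \{1\}$.

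Applying Theorem \ref{embedamalga} to $Q^s \subset M$, one of the two alternatives $Q^s \mathop{\prec}_{\mathcal{N}} N$ or $Q^s \mathop{\prec}_{\mathcal{N}} \mathcal{A}$ must hold. The second case is to be ruled out by the Haagerup property of $\mathcal{A}^\psi$: the intertwining yields a partial isometry $v$ and a normal $\ast$-homomorphism $\theta_1 : Q^s \to q(\mathcal{A}^\psi)^k q$ with $xv = v\theta_1(x)$; a standard argument (pulling back completely positive maps on $q(\mathcal{A}^\psi)^k q$ to $M$ through $v$) shows that $\theta_1(Q^s) \subset q(\mathcal{A}^\psi)^k q$ is a rigid inclusion with $\theta_1(Q^s)$ diffuse, while the corner $q(\mathcal{A}^\psi)^k q$ inherits the Haagerup property from $\mathcal{A}^\psi$, so Theorem \ref{haagfac} delivers the contradiction.

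It remains to treat the good case $Q^s \mathop{\prec}_{\mathcal{N}} N$, where I mimic the endgame of Theorem \ref{rigidaraki}. The intertwining produces a $\gamma$-eigenvector partial isometry $w$ for $\tau \ast \psi$ with some $\gamma \in \Sp(\mathcal{N}, \tau \ast \psi) \subset \Gamma_{\Sp(\mathcal{A},\psi)}$, such that $w^* Q^s w$ sits inside a corner of $M_n(\C) \otimes N$; combining the quasi-regularity of $Q^s$ in $P^s$ with Theorem \ref{amalga} applied inside $\mathcal{N}^n$ to control the quasi-normalizer of the image of $Q^s$ upgrades this to $w^* P^s w \subset w^*w(M_n(\C) \otimes N)w^*w$. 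Reproducing verbatim the amplification trick at the end of Theorem \ref{rigidaraki}, one in fact gets equality $v^* P^s v = v^*v(M_n(\C) \otimes N)v^*v \simeq N^{t/\gamma}$, whence $t/\gamma \in \mathcal{F}(N) = \{1\}$, and $t = \gamma \in \Gamma_{\Sp(\mathcal{A},\psi)}$ as desired. The main obstacle I foresee is the Haagerup-vs-rigidity step of the previous paragraph: transferring the rigidity of $Q^s \subset M$ through a non-unital intertwiner into an honestly rigid inclusion inside $q(\mathcal{A}^\psi)^k q$ requires careful bookkeeping in the almost periodic setting, and is where the free product deformation provided by Theorem \ref{embedamalga} feeds crucially into the Haagerup property of $\mathcal{A}^\psi$.
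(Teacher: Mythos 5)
Your overall architecture is the paper's: transport the rigid quasi-regular inclusion $B \subset N$ through $\theta$, compress to get a diffuse rigid $Q^s$ whose quasi-normalizer in $M$ is $P^s$, apply the dichotomy of Theorem \ref{embedamalga}, exclude the $\mathcal{A}$-alternative via the Haagerup property, and then rerun the endgame of Theorem \ref{rigidaraki} with Theorem \ref{amalga} replacing the mixing argument. The genuine gap is in the exclusion step, i.e. in showing $\displaystyle{Q^s \mathop{\nprec}_{\mathcal{M}} \mathcal{A}}$. You claim that a "standard argument pulling back completely positive maps through $v$" shows that the \emph{full image} $\theta_1(Q^s) \subset q(\mathcal{A}^\psi)^k q$ is a rigid inclusion. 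No such argument is available: the relation $xv = v\theta_1(x)$ carries rigidity information only on the support projection $v^*v$, and away from $v^*v$ the homomorphism produced by Theorem \ref{newembed} is essentially arbitrary, so rigidity of the whole inclusion $\theta_1(Q^s) \subset q(\mathcal{A}^\psi)^k q$ cannot be inferred. What does transfer (by the standard permanence of rigidity under cutting by projections of $(Q^s)' \cap P^s$ and conjugating by $v$) is rigidity of the compressed inclusion $v^* Q^s v \subset v^* P^s v$. But to contradict the Haagerup property of $\mathcal{A}^\psi$ you must know that this compressed inclusion sits inside a corner of $M_m(\C) \otimes \mathcal{A}^\psi$, i.e. that $v^*v \in M_m(\C) \otimes \mathcal{A}^\psi$ and $v^* P^s v \subset v^*v \, (M_m(\C) \otimes \mathcal{A}^\psi) \, v^*v$; a priori one only has $v^*v \in e(M_m(\C) \otimes M)e \cap \rho(Q^s)'$, and a cp-map pullback cannot produce these containments (any attempt founders on the error term $v^*v - E_{(\mathcal{A}^\psi)^m}(v^*v)$). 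This is exactly where the paper applies Theorem \ref{amalga} twice, in the amplified free product of Equation $(\ref{isoampli})$: since $\rho(Q^s)$ is diffuse, its quasi-normalizer inside $e(M_m(\C) \otimes \mathcal{M})e$ lies in $e(M_m(\C) \otimes \mathcal{A})e$, which first gives $v^*v \in M_m(\C) \otimes \mathcal{A}^\psi$ (so one may take $e = v^*v$) and then $v^* P^s v \subset e(M_m(\C) \otimes \mathcal{A}^\psi)e$; only after this do the relative property $(T)$ of $v^*Q^sv \subset v^*P^sv$ and Theorem \ref{haagfac} collide. So the missing idea is not bookkeeping of completely positive maps but a further application of the quasi-normalizer control of Theorem \ref{amalga}.

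Two smaller points. Factoriality of $M$ is not a "Dykema-type" import in the paper: it follows from the same Theorem \ref{amalga} (since $N$ is diffuse, $\mathcal{M} \cap N' \subset N$, hence $\mathcal{Z}(M) \subset \mathcal{Z}(N) = \C 1$), and you should argue it that way rather than by citation. Also, in the endgame the exclusion of the $\mathcal{A}$-alternative is needed a \emph{second} time, for the algebra $Q_1 = \theta^{-1}(v(M_n(\C) \otimes B)v^*)$, before Theorem \ref{amalga} can be used to force $w \in M_{1,k}(\C) \otimes N$ and conclude $P_1 \subset N$; your phrase "reproducing verbatim the amplification trick" hides this, so the Haagerup step you left open is in fact used twice.
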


\begin{proof}
The proof will go as the one of Theorem $\ref{rigidaraki}$. The main change here is that \emph{controlling} quasi-normalizers no longer relies on the result of Popa (namely Theorem $3.1$ in \cite{popamal1}) but on Theorem $\ref{amalga}$ of the present paper. However, we shall sketch the proof for completeness. Denote by $(\mathcal{M}, \varphi) = (N, \tau) \ast (\mathcal{A}, \psi)$, $M = \mathcal{M}^\varphi$ and $A = \mathcal{A}^\psi$. First of all, we prove that $M$ is a factor of type ${\rm II_1}$. Note that $N \subset M$. Take now $x \in \mathcal{Z}(M) = M \cap M'$. Since $N$ is diffuse, $\displaystyle{N \mathop{\nprec}_{\mathcal{M}} \C}$. Theorem $\ref{amalga}$ implies $\mathcal{M} \cap N' \subset N$. Consequently, $x \in N \cap N' = \mathcal{Z}(N) = \C1$. It follows that $\mathcal{Z}(M) = \C1$. Note that in this case, one has $\Sp(\mathcal{M}, \varphi) = \Gamma_{\Sp(\mathcal{A}, \psi)}$.

We already know that the inclusion $\Gamma_{\Sp(\mathcal{A}, \psi)} \subset \mathcal{F}(M)$ holds. Conversely, let $t \in \mathcal{F}(M)$ and let $\theta : M \to M^t$ be a $\ast$-isomorphism. We assume that $t \geq 1$. Realize $M^t := p(M_n(\C) \otimes M)p$. Let $B \subset G$ be a diffuse von Neumann subalgebra such that the inclusion $B \subset N$ is quasi-regular and has the relative property $(T)$. Since $B \subset N$ is quasi-regular, $N$ is contained in the quasi-normalizer of $B$ inside $\mathcal{M}$. Moreover, since $B$ is diffuse, Theorem $\ref{amalga}$ implies that the quasi-normalizer of $B$ inside $\mathcal{M}$ is exactly $N$. Write $Q = \theta(B)$ and $P = \theta(N)$. The inclusion $Q \subset P$ still has the relative property $(T)$ and $P$ is the quasi-normalizer of $Q$ inside $M^t$. Since $s = 1/t \leq 1$, as we did before, choose a projection $q \in Q$ with trace $s$. Write $Q^s := q Q q$ and $P^s := q P q$. We regard $Q^s \subset P^s \subset M$. The inclusion $Q^s \subset P^s$ has the relative property $(T)$, $Q^s$ is diffuse  and $P^s$ is the quasi-normalizer of $Q^s$ inside $M$. We know from Theorem $\ref{embedamalga}$ that either $\displaystyle{Q^s \mathop{\prec}_{\mathcal{M}} N}$ or $\displaystyle{Q^s \mathop{\prec}_{\mathcal{M}} \mathcal{A}}$.

\begin{claim}\label{nonembed}
$\displaystyle{Q^s \mathop{\nprec}_{\mathcal{M}} \mathcal{A}}$.
\end{claim}

\begin{proof}[Proof of Claim $\ref{nonembed}$]
Assume that $\displaystyle{Q^s \mathop{\prec}_{\mathcal{M}} \mathcal{A}}$. We shall obtain a contradiction. By Theorem $\ref{newembed}$, we know that there exist $m \geq 1$, $\gamma > 0$, a projection $e \in M_m(\C) \otimes A$, a nonzero partial isometry $v \in M_{1,m}(\C) \otimes \mathcal{M}^\gamma$ and a (unital) $\ast$-homomorphism $\rho : Q^s \to e(M_m(\C) \otimes A)e$ such that $v^*v \leq e$ and
\begin{equation*}
x v = v \rho(x), \forall x \in Q^s.
\end{equation*}
Note that $vv^* \in M \cap (Q^s)' \subset P^s$. In the same way, $v^*v \in e(M_m(\C) \otimes M)e \cap \rho(Q^s)'$. Since $\rho(Q^s)$ is diffuse, Theorem $\ref{amalga}$ (and Equation $(\ref{isoampli})$) tell us that the quasi-normalizer of $\rho(Q^s)$ inside $e (M_m(\C) \otimes \mathcal{M}) e$ is contained in $e(M_m(\C) \otimes \mathcal{A})e$. Consequently $v^*v \in e (M_m(\C) \otimes A) e$ so that we may assume $e = v^*v$. In the same way, since $e \rho(Q^s)$ is still diffuse, Theorem $\ref{amalga}$ tells us that the quasi-normalizer of $e\rho(Q^s)$ inside $e \mathcal{M} e$ is contained in $e (M_m(\C) \otimes \mathcal{A}) e$. In particular, $v^* P^s v \subset e (M_m(\C) \otimes A) e$. But the inclusion $v^* Q^s v \subset v^* P^s v$ has the relative property $(T)$ and the von Neumann algebra $v^* Q^s v$ is diffuse. Since $A$ is assumed to have the Haagerup property, this cannot happen thanks to Theorem $5.4$ in \cite{popa2001} (see also Theorem $\ref{haagfac}$ in Section $\ref{ara}$). We have a contradiction.
\end{proof}

Consequently, we obtain that $\displaystyle{Q^s \mathop{\prec}_{\mathcal{M}} N}$. We proceed as in the proof of Claim $\ref{nonembed}$ and the proof of Theorem $\ref{rigidaraki}$. Increasing $n$ if necessary, we obtain $\gamma > 0$ and a nonzero partial isometry $v \in M_{n}(\C) \otimes \mathcal{M}$, which is a $\gamma$-eigenvector for $\varphi$ such that  
\begin{equation}\label{inclusion2}
v^* Q v \subset v^* P v \subset N^{t/\gamma},
\end{equation}
with $vv^* = p$, $v^*v := q \in M_n(\C) \otimes N$, and $N^{t/\gamma} := q (M_n(\C) \otimes N) q$. We want to prove that in fact, $v^* P v = q (M_n(\C) \otimes N) q = N^{t/\gamma}$. If we do so, we are done. Indeed, we have $N \simeq N^{t/\gamma}$, and so $t/\gamma \in \mathcal{F}(N) = \{1\}$. Consequently, $t = \gamma \in \Gamma_{\Sp(\mathcal{A}, \psi)}$.

Changing $q$ to an equivalent projection in $M_n(\C) \otimes B$, we can always assume that $q \in M_n(\C) \otimes B$. Define
\begin{equation*}
Q_1 := \theta^{-1}(v (M_n(\C) \otimes B) v^*) \mbox{ and } P_1 := \theta^{-1}(v (M_n(\C) \otimes N) v^*).
\end{equation*}
The inclusion $Q_1 \subset P_1$ has the relative property $(T)$, $P_1$ is the quasi-normalizer of $Q_1$ inside $M$, and Equation $(\ref{inclusion2})$ yields $N \subset P_1$. We want to prove that $P_1 \subset N$. Once again using Theorem $\ref{embedamalga}$ and Claim $\ref{nonembed}$, we get $\displaystyle{Q_1 \mathop{\prec}_{\mathcal{M}} N}$. Thus, there exists $k \geq 1$, $\lambda > 0$ and a nonzero partial isometry $w \in M_{1, k}(\C) \otimes \mathcal{M}^\lambda$, such that $ww^* = 1$, $w^*w \in M_k(\C) \otimes N$ and $w^* P_1 w \subset N^{1/\lambda}$, where we have realized $N^{1/\lambda} := w^*w (M_k(\C) \otimes N) w^*w$. Since $ww^* = 1$, we have $P_1 w \subset w N^{1/\lambda}$.  Since $N \subset P_1$, it follows that  $N w \subset w N^{1/\lambda}$. From Theorem $\ref{amalga}$, since $N$ is diffuse, we know that any $N$-$N$ subbimodule $\mathcal{H}$ of $L^2(\mathcal{M})$ such that $\dim(\mathcal{H}_{N}) < \infty$  (as a right $N$-module) is contained in $L^2(N)$. In particular, this implies that $w \in M_{1, k}(\C) \otimes N$. Since $P_1 \subset w (N^{1/\lambda}) w^*$, we get $P_1 \subset N$. We are done.
\end{proof}

\appendix

\section{On the Polar Decomposition of a Vector}

The von Neumann algebra $\mathcal{M}$ is assumed to be endowed with a faithful normal almost periodic state $\varphi$. We regard $\mathcal{M} \subset B(L^2(\mathcal{M}, \varphi))$. For $\gamma \in \Sp(\mathcal{M}, \varphi)$, denote as usual by $\mathcal{M}^\gamma \subset \mathcal{M}$ the subspace of all $\gamma$-eigenvectors for the state $\varphi$. Denote by $\mathcal{M}_{\alg} := \span \{ \mathcal{M}^\gamma : \gamma \in \Sp(\mathcal{M}, \varphi) \}$. Let $\gamma \in \Sp(\mathcal{M}, \varphi)$. Let $\xi \in L^2(\mathcal{M}^\gamma)$ such that $\xi \neq 0$.   Let $T_\xi^0 : \widehat{\mathcal{M}_{\alg}} \to L^2(\mathcal{M}, \varphi)$ be the linear operator defined by
\begin{equation*}
T_\xi^0(\widehat{x})  =  \lambda^{-1/2} \xi x, \forall x \in \mathcal{M}^\lambda. 
\end{equation*}
The aim of this Appendix is to prove the following proposition: it is well known from specialists, but we  give a proof for the sake of completeness.

\begin{prop}\label{polar}
The densily defined operator $T_\xi^0$ is closable. Denote by $T_\xi$ its closure. The operator $T_\xi$ is affiliated with $\mathcal{M}$. Write $T_\xi = v |T_\xi|$ for its polar decomposition. Then, $v \in \mathcal{M}^\gamma$ and $|T_\xi|$ is affiliated with the centralizer $\mathcal{M}^\varphi$. Moreover, if $B \subset \mathcal{M}^\varphi$ is a von Neumann subalgebra such that for every $x \in B$, $x \xi = \xi x$, then for every $x \in B$, we have $xv = vx$.
\end{prop}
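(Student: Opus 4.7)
The plan is to prove all four assertions by first thinking of $T_\xi^0$ as an unbounded ``left multiplication by $\xi$''. The algebraic content then reduces to eigenvalue bookkeeping under $\sigma^\varphi$, together with the modular identity $J\widehat{a} = \lambda^{-1/2}\widehat{a^{*}}$ for $a\in\mathcal{M}^\lambda$ (which follows from $S = J\Delta^{1/2}$ and $\Delta^{1/2}\widehat{a} = \lambda^{1/2}\widehat{a}$) and the bimodule formula $\xi x = Jx^*J\xi$.

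For closability, I would introduce the companion $S^0 := \gamma^{1/2}\, T_{J\xi}^0$ on $\widehat{\mathcal{M}_{\alg}}$, which is well-defined since $J\xi \in L^2(\mathcal{M}^{\gamma^{-1}})$, and verify $\langle T_\xi^0 \hat{x}, \hat{y}\rangle = \langle \hat{x}, S^0 \hat{y}\rangle$ for $x \in \mathcal{M}^\lambda$, $y \in \mathcal{M}^\mu$ by direct computation: both sides vanish by orthogonality of eigenspaces unless $\mu = \gamma\lambda$, and in that case both equal $\lambda^{-1}\langle \xi, \widehat{yx^*}\rangle$. Hence $(T_\xi^0)^* \supset S^0$ is densely defined, $T_\xi^0$ is closable, and I set $T_\xi := \overline{T_\xi^0}$. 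For affiliation with $\mathcal{M}$, the essential step is to verify on the core $D_0 := \widehat{\mathcal{M}_{\alg}}$ the identity $T_\xi^0(\hat{x}\cdot a) = (T_\xi^0\hat{x})\cdot a$ for all $a \in \mathcal{M}_{\alg}$, which is immediate from associativity of $\xi x = Jx^*J\xi$. A standard closed-graph argument then transfers this commutation to $T_\xi$ (strong commutation with each bounded $Ja^*J$), and passing to the bounded transform $T_\xi(1+T_\xi^*T_\xi)^{-1/2}$ together with $(1+T_\xi^*T_\xi)^{-1}$ yields bounded elements commuting with $Ja^*J$ for $a \in \mathcal{M}_{\alg}$, hence, by $\sigma$-weak continuity and density, with all of $J\mathcal{M}J = \mathcal{M}'$.

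For the polar decomposition, I would exploit that $\Delta^{it}|_{L^2(\mathcal{M}^\lambda)} = \lambda^{it}$ and $\xi x \in L^2(\mathcal{M}^{\gamma\lambda})$ to obtain $\Delta^{it}T_\xi^0\Delta^{-it} = \gamma^{it}T_\xi^0$ by a one-line computation (the factors $\lambda^{-it}$, $(\gamma\lambda)^{it}$ collapse to $\gamma^{it}$), then pass to closures. Applying $\sigma_t^\varphi$ to $T_\xi = v|T_\xi|$ yields $\sigma_t^\varphi(v)\sigma_t^\varphi(|T_\xi|) = (\gamma^{it}v)|T_\xi|$; since $\gamma^{it}$ is a scalar, both sides are valid polar decompositions (the support conditions are preserved), so uniqueness forces $\sigma_t^\varphi(v) = \gamma^{it}v$ and $\sigma_t^\varphi(|T_\xi|) = |T_\xi|$. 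Thus $v \in \mathcal{M}^\gamma$ and $|T_\xi|$ is affiliated with $\mathcal{M}^\varphi$.

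For the final statement, with $x \in B \subset \mathcal{M}^\varphi$ satisfying $x\xi = \xi x$, I note that $xy \in \mathcal{M}^\lambda$ whenever $y \in \mathcal{M}^\lambda$ (since $x$ is in the centralizer) and compute
\begin{equation*}
x\, T_\xi^0\, \hat{y} \;=\; \lambda^{-1/2}\, x\xi y \;=\; \lambda^{-1/2}\, \xi x y \;=\; T_\xi^0\, \widehat{xy} \;=\; T_\xi^0\, x\hat{y}.
\end{equation*}
Applied to $x$ and to $x^* \in B$ and passed to closures, this gives strong commutation of $x$ with $T_\xi$ and with $T_\xi^*$, hence with $T_\xi^*T_\xi = |T_\xi|^2$, with $|T_\xi|$ and with its range projection. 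Combining $xT_\xi = T_\xi x$ with $x|T_\xi| = |T_\xi|x$ forces $(xv - vx)|T_\xi| = 0$, so $xv = vx$ on the range of $|T_\xi|$; on the kernel, $v$ vanishes and $x$ preserves the kernel, so equality holds there too. I expect the main technical obstacle to be the affiliation step, specifically the careful transfer of commutation from the dense core $D_0$ to the unbounded closure $T_\xi$ and then to all of $\mathcal{M}'$ via the bounded transform; the remaining content is essentially algebraic eigenvalue bookkeeping.
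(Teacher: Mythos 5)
Your proposal is correct and follows essentially the same route as the paper: your companion operator $\gamma^{1/2}T^0_{J\xi}$ is exactly the $T^0_{S_\varphi\xi}$ used there to get closability, and affiliation, the covariance $\Delta_\varphi^{it}T_\xi\Delta_\varphi^{-it}=\gamma^{it}T_\xi$ with uniqueness of the polar decomposition, and the final commutation step are handled in the same way. The only cosmetic difference is at the end, where the paper upgrades $x|T_\xi|\subset|T_\xi|x$ to an equality via finiteness of $\mathcal{M}^\varphi$, while you argue through commutation with the spectral projections of $|T_\xi|$; both are valid.
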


\begin{proof}
First, we prove that the operator $T_\xi^0$ is closable. It suffices to show that $(T_\xi^0)^*$ is densily defined. Let $\alpha \in \Sp(\mathcal{M}, \varphi)$. Set $\beta = \gamma \alpha$. Let $y \in \mathcal{M}^\alpha$ and $z \in \mathcal{M}^\beta$. Then,
\begin{eqnarray*}
\langle T_\xi^0(\widehat{y}), \widehat{z} \rangle & = & \alpha^{-1/2} \langle \xi y, \widehat{z} \rangle \\
& = & \alpha^{-1/2} \langle J_\varphi y^* J_\varphi \xi, \widehat{z} \rangle \\
& = & \alpha^{-1/2} \langle z^* J_\varphi y^* J_\varphi \xi, \widehat{1} \rangle \\
& = & \alpha^{-1/2} \langle J_\varphi y^* J_\varphi z^* \xi, \widehat{1} \rangle \\
& = & \alpha^{-1/2} \langle \widehat{y}, J_\varphi z^* \xi \rangle \\
& = & \alpha^{-1/2} \langle \widehat{y}, (J_\varphi \xi) z \rangle \\
& = & (\gamma \alpha)^{-1/2} \langle \widehat{y}, (S_\varphi \xi) z \rangle \\
& = & \langle \widehat{y}, T^0_{S_\varphi \xi}(\widehat{z}) \rangle.
\end{eqnarray*}
If $\beta \neq \gamma \alpha$, then $\langle T_\xi^0(\widehat{y}), \widehat{z} \rangle = \langle \widehat{y}, T^0_{S_\varphi \xi}(\widehat{z}) \rangle = 0$. Consequently, for every $y, z \in \mathcal{M}_{\alg}$, 
\begin{equation*}
\langle T_\xi^0(\widehat{y}), \widehat{z} \rangle = \langle \widehat{y}, T^0_{S_\varphi \xi}(\widehat{z}) \rangle.
\end{equation*}
Then $T_{S_\varphi \xi}^0 \subset (T_\xi^0)^*$, and so $(T_\xi^0)^*$ is densily defined. Thus $T_\xi^0$ is closable and we denote by $T_\xi$ its closure. We prove now that $T_\xi$ is affiliated with $\mathcal{M}$. 
Let $\alpha, \lambda \in \Sp(\mathcal{M}, \varphi)$. Let $a \in \mathcal{M}^\alpha$ and $x \in \mathcal{M}^\lambda$. On the one hand,
\begin{eqnarray*}
T_\xi^0 J_\varphi a^* J_\varphi (\widehat{x}) & = & \alpha^{1/2} T_\xi^0 (\widehat{xa}) \\
& = & \lambda^{-1/2} \xi xa.
\end{eqnarray*}
On the other hand,
\begin{equation*}
J_\varphi a^*J_\varphi T_\xi^0(\widehat{x}) = \lambda^{-1/2} \xi xa.
\end{equation*}
Consequently, we have $J_\varphi a^*J_\varphi T_\xi^0 \subset T_\xi^0 J_\varphi a^* J_\varphi$, for every $a \in \mathcal{M}^\lambda$. Since $J_\varphi \mathcal{M}_{\alg} J_\varphi$ is $\sigma$-weakly dense in $\mathcal{M}'$, it follows that $T_\xi$ is affiliated with $\mathcal{M}$.

Write $T_\xi = v |T_\xi|$ for the polar decomposition of $T_\xi$. We know that $v \in \mathcal{M}$. Let $\lambda \in \Sp(\mathcal{M}, \varphi)$ and $x \in \mathcal{M}^\lambda$. Then, for every $t \in \R$,
\begin{eqnarray*}
\Delta_\varphi^{it} T_\xi^0 \Delta_\varphi^{-it} (\widehat{x}) & = & \lambda^{-it} \Delta_\varphi^{it} T_\xi^0 (\widehat{x}) \\
& = & \lambda^{-1/2} \lambda^{-it} \Delta_\varphi^{it} (\xi x) \\
& = & \gamma^{it} \lambda^{-1/2}  \xi x \\
& = & \gamma^{it}  T^0_\xi (\widehat{x}).
\end{eqnarray*}
Thus, it follows that $\Delta_\varphi^{it} T_\xi \Delta_\varphi^{-it} = \gamma^{it} T_\xi$, for any $t \in \R$. But, we also have 
\begin{equation*}
\Delta_\varphi^{it} T_\xi \Delta_\varphi^{-it} = (\Delta_\varphi^{it} v \Delta_\varphi^{-it}) (\Delta_\varphi^{it} |T_\xi| \Delta_\varphi^{-it}).
\end{equation*}
By uniqueness of the polar decomposition, we get for every $t \in \R$, 
\begin{eqnarray*}
\Delta_\varphi^{it} v \Delta_\varphi^{-it} & = & \gamma^{it} v, \\
\Delta_\varphi^{it} |T_\xi| \Delta_\varphi^{-it} & = & |T_\xi|.
\end{eqnarray*}
Consequently, $\sigma_t^\varphi (v) = \gamma^{it} v$, for every $t \in \R$, and so $v \in \mathcal{M}^\gamma$. Since $\mathcal{M}^\varphi = \mathcal{M} \cap \{ \Delta_\varphi^{it} : t \in \R \}'$, it follows that $|T_\xi|$ is affiliated with $\mathcal{M}^\varphi$. 

At last, let $B \subset \mathcal{M}^\varphi$ be a von Neumann subalgebra such that for any $x \in B$, $x \xi = \xi x$. Fix $x \in B$. It is straightforward to check that $x T_\xi \subset T_\xi x$. We also have $x (T_\xi)^* \subset (T_\xi)^* x$, and so $x(T_\xi)^*T_\xi \subset (T_\xi)^*T_\xi x$. By functional calculus, it follows that $x |T_\xi| \subset |T_\xi| x$. Moreover, since $\mathcal{M}^\varphi$ is a finite von Neumann algebra, since $x \in \mathcal{M}^\varphi$ and $|T_\xi|$ is affiliated with $\mathcal{M}^\varphi$, it follows that $x |T_\xi|$ and $|T_\xi| x$ are closed, affiliated with $\mathcal{M}^\varphi$ and consequently the equality $x |T_\xi| = |T_\xi| x$ holds. Thus,
\begin{eqnarray*}
x v |T_\xi| & = & x T_\xi \\
& \subset & T_\xi x \\
& \subset & v |T_\xi| x \\
& \subset & v x |T_\xi|. 
\end{eqnarray*}
It follows that $xv$ and $vx$ coincide on the range of $|T_\xi|$, and so $xv = vx$. Thus, $xv = vx$, for every $x \in B$.
\end{proof}


\bibliographystyle{plain}

\begin{thebibliography}{AA}

\bibitem{barnett95} {\sc L. Barnett}, {\it Free product von Neumann algebras of type ${\rm III}$}. Proc. Amer. Math. Soc. {\bf 123} (1995), 543--553.

\bibitem{burger} {\sc M. Burger}, {\it Kazhdan constants for $\SL_3(\Z)$}. J. Reine Angew. Math. {\bf 413} (1991), 36--67.


\bibitem{haagerup} {\sc P.A. Cherix, M. Cowling, P. Jolissaint, P. Julg \& A. Valette}, {\it Groups with the Haagerup Property.} Progress in Mathematics {\bf 197}.
Birkh\"{a}user Verlag, Basel, Boston, Berlin, 2001.

\bibitem{choda} {\sc M. Choda}, {\it Group factors of the Haagerup type.} Proc. Japan Acad. {\bf 59} (1983), 174--177.

\bibitem{CJ} {\sc A. Connes \& V.F.R. Jones}, {\it Property $(T)$ for von Neumann algebras}. Bull. London Math. Soc. {\bf 17} (1985), 57--62.

\bibitem{connes74} {\sc A. Connes}, {\it Almost periodic states and factors of type ${\rm III_1}$}. J. Funct. Anal. {\bf 16} (1974), 415--445.

\bibitem{connes73} {\sc A. Connes},
{\it Une classification des facteurs de type {\rm III}.} Ann. Sci. {\'E}cole Norm. Sup. {\bf 6} (1973), 133--252.

\bibitem{dykema96} {\sc K. Dykema},
{ \it Free products of finite-dimensional and other von Neumann algebras
with respect to non-tracial states.} Free probability theory (Waterloo, ON, 1995)
Fields Inst. Commun. {\bf 12}
Amer. Math. Soc., Providence, RI, 1997, pp. 41--88.

\bibitem{dykema94} {\sc K. Dykema}, {\it Interpolated free group factors.} Pacific J. Math. {\bf 163} (1994), 123--135.


\bibitem{fernos} {\sc T. Fern\'os},
{\it Relative Property $(T)$ and linear groups.} Ann. Inst. Fourier. {\bf 56} (2006), 1767--1804.

\bibitem{golodets} {\sc V. Y. Golodets \& N. I. Nessonov}, {\it T-property and nonisomorphic full factors of types {\rm II} and {\rm III}}. J. Funct. Anal. {\bf 70} (1987), 80--89.

\bibitem{haa} {\sc U. Haagerup}, {\it An example of non-nuclear $C^*$-algebra which has the metric approximation property.} Invent. Math. {\bf 50} (1979), 279--293.

\bibitem{HV} {\sc P. de la Harpe \& A. Valette}, {\it La propri\'et\'e $(T)$ de Kazhdan pour les groupes localement compacts.} Ast\'erisque {\bf 175}.
SMF, Paris, 1989.

\bibitem{houdayer2} {\sc C. Houdayer}, {\it On some free products of von Neumann algebras which are free Araki-Woods factors.} Int. Math. Res. Notices. Vol. {\bf 2007}, article ID rnm098, 21 pages.

\bibitem{ioana3} {\sc A. Ioana}, {\it Rigidity results for wreath product ${\rm II_1}$ factors.} J. Funct. Anal. {\bf 252} (2007), 763--791.

\bibitem{ipp} {\sc A. Ioana, J. Peterson \& S. Popa}, {\it Amalgamated free products of $w$-rigid factors and calculation of their symmetry groups.} Acta Math. {\bf 200} (2008), 85--153. 

\bibitem{jolissaint} {\sc P. Jolissaint}, {\it Haagerup approximation property for finite von Neumann algebras.} J. Operator Th. {\bf 48} (2002), 549--571.

\bibitem{kazhdan} {\sc D. Kazhdan}, {\it Connection of the dual space of a group with the structure of its subgroups.} Funct. Anal. Appl. {\bf 1} (1967), 63--65.

\bibitem{margulis} {\sc G. Margulis}, {\it Finitely-additive invariant measures on Euclidean spaces.} Ergodic Th. and Dynam. Sys. {\bf 2} (1982), 383--396.


\bibitem{popasup} {\sc S. Popa}, {\it Cocycle and orbit equivalence superrigidity for Bernoulli actions of Kazhdan groups.} Invent. math. {\bf 170} (2007), 243--295.

\bibitem{popamal1} {\sc S. Popa}, {\it Strong rigidity of ${\rm II_1}$ factors arising from malleable actions of w-rigid groups ${\rm I}$.} Invent. Math. {\bf 165} (2006), 369--408.

\bibitem{popamal2} {\sc S. Popa}, {\it Strong rigidity of ${\rm II_1}$ factors arising from malleable actions of w-rigid groups ${\rm II}$.} Invent. Math. {\bf 165} (2006), 409--453.

\bibitem{popa2001} {\sc S. Popa}, {\it On a class of type ${\rm II_1}$ factors with Betti numbers invariants.} Ann. of Math. {\bf 163} (2006), 809--899.

\bibitem{popamsri} {\sc S. Popa}, {\it Some rigidity results for non-commutative Bernoulli shifts.} J. Funct. Anal. {\bf 230} (2006), 273--328.

\bibitem{popavaes} {\sc S. Popa \& S. Vaes}, {\it Strong rigidity of generalized Bernoulli actions and computations of their symmetry groups.} Adv. Math. {\bf 217} (2008), 833--872.

\bibitem{popavaes2} {\sc S. Popa \& S. Vaes}, {\it Actions of $\mathbf{F}_\infty$ whose ${\rm II_1}$ factors and orbit equivalence relations have prescribed fundamental group.} arXiv:0803.3351. 

\bibitem{radulescu1994} {\sc F. R\u{a}dulescu}, { \it Random matrices, amalgamated free products and subfactors of the von Neumann algebra of a free group, of noninteger index.} Invent. Math. {\bf 115} (1994), 347--389.



\bibitem{shalom} {\sc Y. Shalom}, {\it Bounded generation and Kazhdan property $(T)$}. Publ. Math. I.H.\'E.S. {\bf 90} (1999), 145--168.


\bibitem{shlya2004} {\sc D. Shlyakhtenko}, {\it On the classification of full
  factors of type {\rm III}.} Trans. Amer. Math. Soc. {\bf 356} (2004), 4143--4159.



\bibitem{shlya99} {\sc D. Shlyakhtenko}, {\it $A$-valued semicircular systems.} J. Funct. Anal. {\bf 166} (1999), 1--47.

\bibitem{shlya98} {\sc D. Shlyakhtenko}, {\it Some applications of freeness with amalgamation.} J. Reine Angew. Math. {\bf 500} (1998), 191--212.

\bibitem{shlya97} {\sc D. Shlyakhtenko}, {\it Free quasi-free states.} Pacific J. Math. {\bf 177} (1997), 329--368.

\bibitem{takesakiII} {\sc M. Takesaki}, { \it Theory of Operator Algebras ${\rm II}$.} EMS {\bf 125}. Springer-Verlag, Berlin, Heidelberg, New-York, 2000.

\bibitem{vaesbern} {\sc S. Vaes}, {\it Rigidity results for Bernoulli actions and their von Neumann algebras (after S. Popa).} S{\'e}minaire Bourbaki, expos\'e 961. Ast\'erisque {\bf 311} (2007), 237-294.

\bibitem{vaes2004} {\sc S. Vaes}, {\it \'Etats quasi-libres libres et facteurs de
  type {\rm III} (d'apr{\`e}s D. Shlyakhtenko).} S{\'e}minaire Bourbaki, expos\'e 937, {Ast\'erisque {\bf
  299}} (2005), 329--350.

\bibitem{valetterel} {\sc A. Valette},
{\it Group pairs with property $(T)$, from arithmetic lattices.} Geom. Dedicata {\bf 112} (2005), 183--196.


\bibitem{voiculescu92} {\sc D.-V. Voiculescu, K.J. Dykema \& A. Nica}, {\it Free
  random variables.} CRM Monograph Series {\bf 1}.
American Mathematical Society, Providence, RI, $1992$. 




\end{thebibliography}

\end{document}